\documentclass[a4paper]{article}
\usepackage{amsmath,amssymb,amsfonts}
\usepackage{enumerate}
\usepackage{fullpage}
\usepackage{graphicx}
\usepackage{subfigure}
\usepackage{url}
\usepackage{algorithm2e}

\parindent0mm

\usepackage{boxedminipage}
\usepackage[T1]{fontenc}
\usepackage[english]{babel}
\usepackage{color}
\usepackage{epstopdf}

\newcommand{\N}{\mathbb{N}}
\newcommand{\R}{\mathbb{R}}

\newcommand{\ri}{\mathrm{ri}\,}

\newcommand{\diag}{\mathrm{diag}\,}
\newcommand{\conv}{\mathrm{conv}\,}
\newcommand{\distone}{\mathrm{dist}_1}
\newcommand{\midset}{\;|\;}
\def\Expect{\mathop{\bf E{}}}
\def\Prob{\mathbb{P}}

\newcommand{\qed}{$\Box$}

\newtheorem{Definition}{Definition}[section]

\newtheorem{theorem}[Definition]{Theorem}

 \newtheorem{lemma}[Definition]{Lemma}
 \newtheorem{corollary}[Definition]{Corollary}
 \newtheorem{remark}[Definition]{Remark}

\newenvironment{proof}[1][Proof]{\begin{trivlist}
\item[\hskip \labelsep {\bfseries #1}]}
{\end{trivlist}}

\hbadness=10000 \hfuzz=50pt

\title{Nonhomogeneous Place-Dependent Markov Chains, Unsynchronised AIMD, and Network Utility Maximization}


\author{Fabian R. Wirth%
\thanks{Faculty of Computer Science, University of Passau, Germany, 
({\tt fabian.lastname@uni-passau.de})} 
\and 
Sonja Stuedli%
\thanks{Newcastle University, Newcastle, Australia 
({\tt sonja.stuedli@uon.edu.au})} 
\and 
Jia Yuan Yu\footnotemark[1]%
\thanks{Concordia University, Montreal, Quebec, Canada, 
({\tt jiayuan.yu@concordia.ca})} 
\and 
Martin Corless%
\thanks{School of Aeronautics and Astronautics, Purdue
    University, West Lafayette, IN, USA ({\tt corless@purdue.edu})} 
\and
Robert Shorten\footnotemark[1]%
\thanks{IBM Research Ireland, Damastown Industrial Estate, Mulhuddart,
  Dublin 15, Ireland, 
({\tt  robshort@ie.ibm.com}).}
}

\date{\today}

\headheight15pt
\headsep0.5cm

\begin{document}

\maketitle

\begin{abstract}      
   \let\thefootnote\relax\footnote{This work is supported in part by the EU FP7 project INSIGHT under grant 318225.}
We present a solution of a class of network utility
    maximization (NUM) problems using minimal communication. The
    constraints of the problem are inspired less by TCP-like congestion
    control but by problems in the area of internet of things and related
    areas in which the need arises to bring the behavior of a large group of
    agents to a social optimum. The approach uses only intermittent
    feedback, no inter-agent communication, and no common clock.

    The proposed algorithm is a combination of the classical AIMD
    algorithm in conjunction with a simple probabilistic rule for the agents to
    respond to a capacity signal. This leads to a nonhomogeneous Markov
    chain and we show almost sure convergence of this chain to the social optimum.
\end{abstract}

{\bf Keywords: } {AIMD; Nonhomogeneous Markov Chains; Invariant Measure; Iterated Function Systems; Almost Sure Convergence; NUM with intermittent feedback}

\pagestyle{myheadings}
\thispagestyle{plain}
\markboth{~}{AIMD and Optimization}

\section{Introduction}
Recent developments in the context of Smart Grid, Smart Transportation,
and the internet have given rise to a rich set of optimization problems in
which a number of agents collaborate to achieve a social optimum
\cite{clement2009,Putrus,deilami,low2011,CorlKing2016}. For example, collaborative
cruise control systems are emerging in which a group of vehicles on a
stretch of road share information to determine a speed limit that
minimizes fuel consumption subject to some constraint (traffic flow,
pollution constraints, etc.). Other examples of this problem can be found
in several application domains; in the energy literature
\cite{CSIRO2012solarIntermitency,finn09dsm,biegel2013EccOnOff}; in the
electric vehicles literature \cite{IET,Haesen,Fernandez,Demand_Dispatch},
in distributed load control \cite{low2011}; in the study of control
strategies for thermostatically controlled loads, as refrigerators or air
conditioners \cite{angeli2012,ehsan2013EccOnOff,callaway2013EccOnOff},
and of course, in the optimization literature itself
\cite{StanShor10,tassiulas2001ratecontrol}.\newline

Roughly speaking the optimization problems that emerge in such
applications are simple to solve.  Typically, one wishes to minimize a sum
of strictly convex functions of a single variable subject to a linear, or
perhaps, polynomial constraint. It is well known that such problems can
be readily solved by a multitude of methods in a convex optimization
framework \cite{boyd2004convex}. Notwithstanding this fact, solving these
problems in a smart grid or smart transportation framework is
challenging. The difficulties that arise in such environments are due to
several factors.\newline

First one wishes to find solutions which can
be implemented with minimal communication (or even none) between individual
agents, and between the agents and infrastructure. This need arises due to
the fact that many of these problems are massively large scale in nature,
and continuous inter-agent communication would place an undue burden on
the telecommunications infrastructure \cite{low2011} and due to privacy
considerations.\newline

The second difficulty is that the number of agents participating in the
optimization problem is large and time-varying, and each agent's utility
is private and is not communicated to other agents in order to preserve this privacy. An additional further
difficulty arises because agents in such applications typically have
limited actuation capabilities, i.e. limited capabilities of effecting a change in 
their state. For example, in {\em internet of things
  applications} agents can often only influence their behavior by
switching themselves on or off.  Thus, distributed algorithms for solving
large scale optimization problems in which agents with limited actuation
capabilities collaborate to achieve a common goal, is a highly topical
research problem.\newline

Our objective in this paper is to develop algorithms that can be deployed
in such situations. To this end, consider a network of
$n$ agents, each with a state $x_i \in \R$, $i=1,\ldots,n$ representing an
amount of allocated resource. The allocated resource will be updated at
discrete time instances $k= 0,1,2,\ldots$, where in implementations a
common clock is not required. The agents also keep track of their
individual long term average
\begin{equation}
    \label{eq:1}
    \bar{x}_i(k) = \frac{1}{k+1} \sum_{j=0}^k x_i(j) \,.
\end{equation}
We assume an upper bound $C>0$ of the possible use of resources. To each
agent we associate a cost function $f_i:[0, C] \to\R$. We consider the
problem of network-wide optimal allocation, which can be stated as
\begin{equation}
    \begin{aligned}
\text {minimize}_{w_1,\ldots,w_n} & \quad && \sum_{i=1}^n f_i(w_i) 
\\
\text {subject to} & \quad && \sum_{i=1}^n w_i 
= C \,, \quad w_i \geq 0 \,,
i=1,\ldots,n \,.
    \end{aligned}
\end{equation}

Under suitable convexity assumption this optimization problem has a unique
optimal point $w^*$.  We wish to steer the average values $\bar{x}_i$ to
the optimal point, i.e. we are looking for an algorithm such that
\begin{equation*}
    \lim_{k\to  \infty} \bar{x}_i(k) = w_i^*.
\end{equation*}
Further, we wish to do this with as little inter-agent communication as possible, and
with a minimum amount of centralized actuation.  As we shall see
inter-agent communication is not necessary at all to achieve convergence
to the optimum, nor is it necessary to communicate a feedback signal in
the form of a multiplier.  Rather, we will show that it is sufficient to
provide the centralized, one-bit information, that the constraint has been
reached. Furthermore, the conditions for convergence are independent of
network dimension, depending only on the worst agent
in the system. \\

All this can be achieved
using
only the {\em additive-increase multiplicative decrease} (AIMD)
algorithm. Recall that AIMD 
is an algorithm in which agents continuously claim more and more of the
available resource in a gentle fashion until a notification (of a capacity
event) is sent to them that the aggregate amount of available
resource has been exceeded. This is the additive increase (AI) phase of the algorithm. They then reduce their demand on resource by a factor between zero and one.
This is the multiplicative decrease (MD) of the algorithm.  The AI phase
of the algorithm then restarts immediately. In the algorithm each agent will respond to the capacity signal with a certain
probability $\lambda_i$. The key observation is that by choosing
$\lambda_i$ as a function of the long term average $\bar{x_i}(k)$ we may
achieve convergence to the optimal point $w_i^*$ without any communication
besides the capacity signal.
\newline 

To motivate this result, we make use of the following two known
observations. \newline

{\em Observation 1 (Consensus) :} The optimization problem may be
formulated in a Lagrangian framework as follows. We
introduce the Lagrange parameter $\mu\in\R$ and consider 
\begin{eqnarray}
\label{eq:Lagrange}
    H(w_1,...,w_n, \mu) = \sum_{i=1}^n f_i  (w_i) - \mu \left(\sum_{i=1}^n w_i-C\right).
\end{eqnarray}
From the Karush-Kuhn-Tucker (KKT) conditions \cite[Section 5.5.3]{boyd2004convex}, the
following necessary and sufficient condition for optimality can be
obtained by setting all partial derivatives to zero. 
If we assume that the optimal point $w^*$ has only positive entries, then
the inequality constraints $w_i \geq 0$ are not active. In this case it is
easy to see that the multipliers corresponding to the inequality
constraints vanish in the KKT conditions. So under the assumption of
positivity of the optimal point $w^* \in \R^n$, $\mu^* \in \R$ we have
\begin{eqnarray}
\label{eq:KKT}
\mu^* = \frac{\partial f_i }{\partial x_i}(w_i^*) \;\; \quad \forall \; i= 1,\ldots,n.
\end{eqnarray}
In other words,
the system is at optimality when the derivatives of the utility functions
are in consensus. We will show in Lemma~\ref{lem:posoptimalpoint}  
that the assumptions to be imposed for our algorithm imply that the
optimal point is positive and so \eqref{eq:KKT} does indeed characterize
the optimal point.
\newline

{\em Observation 2 (Ergodic behavior) :} It follows from the results in
\cite{shorten2007modelling}, under the assumptions of ergodicity, that the
ergodic limit of a network of AIMD flows is almost surely of the form
\begin{equation}
\label{eq:steadystate}
\lim_{k \rightarrow \infty}   \frac{1}{k+1}  \sum_{j=0}^{k} x_i(j) 
 =  \frac{\Theta} {\lambda_i}\,,
\end{equation}
where $\Theta$ is a network-specific constant and $\lambda_i$ is the steady-state probability that the $i$th AIMD agent responds to 
a notification of a capacity event.\newline 

With these two observations in mind, we can now aim to choose
place-dependent probability functions $\lambda_i(\cdot)$ so that the
equation for the steady state behavior \eqref{eq:steadystate} is
equivalent to the KKT condition \eqref{eq:KKT}. Suppose that, in the
$k$th iteration, each agent responds to a capacity event with probability
\begin{equation}\label{eq:prob-set}
\lambda_i(\bar{x}_i(k)) \ = \ \Gamma  \ \frac{f_i'(\bar{x}_i(k))}{\bar{x}_i(k)}\,,
\end{equation}
where $\bar{x}_i(k)$ is the average of the last $k$ values of $x_i$. Here
$\Gamma$ is a network wide constant chosen to ensure that
$0<\lambda_i(\bar{x}_i) < 1$.  Suppose now that $\bar{x}_i(k) \approx
x_i^*$. Then we can write $\lambda_i(\bar{x}_i(k)) \approx \lambda_i^*$. Provided that for this choice \eqref{eq:steadystate} holds, we obtain for large $k$
\begin{equation}
\lambda_i(\bar{x}_i(k)) \ \approx \ \Gamma \frac{f_i'(\bar{x}_i(k))}{\Theta} \lambda_i(\bar{x}_i(k))
\end{equation}
and so $f_i'(\bar{x}_i(k)) \approx \Theta/\Gamma \approx  f_j'(\bar{x}_j(k))$ for all $i,j$. These are precisely the KKT conditions which in many cases 
are both necessary and sufficient for optimality. \newline

The purpose of this paper is to show that the above intuition is
true. Specifically, with the place-dependent probabilities $\lambda_i(\cdot)$ chosen as
in \eqref{eq:prob-set}, we do indeed have $\bar{x}_i(k) \approx x_i^*$ for large
$k$. Consequently, the {\em AIMD} algorithm can be modified to solve
distributed optimization problems in asynchronous environments in a manner
that is both effective and efficient in terms of communication
overhead. \newline

Our paper is structured as follows. We begin by reviewing a recently proposed switched systems model of AIMD
dynamics and known results on the stochastic stability of this model for fixed probabilities. The main result for fixed probabilities
is that the long term averages converge almost surely and that this limit can be expressed analytically. We start in Section~\ref{sec:prelims} by introducing notation and recalling some facts about the dynamics of stochastic AIMD algorithms. In  Section 3 we present a discussion of a stochastic AIMD algorithm that solves the NUM problem. In Section~4 we introduce two dynamical systems, representing these algorithms. These differ in the choice of the probability laws. We then state the main convergence results.
Related works are discussed in Section~\ref{sec:relworks}.   
In Section~\ref{sec:example} we apply the results to solve the NUM problem for a network of agents. The main proofs are provided in the Appendix. There, we give intermediate results that link the fixed probability case with the place-dependent case. Specifically, we ask to what degree may the place dependent case 
be approximated with the fixed probability case, and over which time intervals. To this end we study the robustness properties of a deterministic 
system that iterates on the expectation operator. These results are then used to establish the main result of the paper.\newline

\begin{remark} {\bf The main contribution}  of this paper is to propose a new technique for solving a NUM problem that can be used in IoT related situations. 
Various techniques exist for solving such problems and we shall enunciate the difference of our method, with respect to these, in the related work 
section later in the paper. However, since AIMD is so closely related to Transmission Control Protocol (TCP), and since TCP can be studied in an optimization framework,
several brief comments are merited at this point to avoid any confusion. First, we do not account for queues in the network, nor do we assume that these give rise 
to a loss process. Also, we do not use ODE's, or fluid like approximations, to model our network. Rather, we study an exact discrete time system that arises in the study of AIMD dynamics, 
without queues, in which losses are governed agents themselves in the network. In other words, and in the language of TCP, 
in our setting losses are generated at the edge, unlike most TCP systems, in which losses are generated at the center via queuing dynamics.
Our principal concern is to determine rules by which agents generate these losses so that certain optimization problems can be solved. 
All together, this gives rise to a ``Markov-like'' system which requires special machinery for its study, and a large part of this paper is
devoted to developing this machinery. 
\end{remark}

\section{Preliminaries}
\label{sec:prelims}
Our starting point is the suite of algorithms that
underpin the Transmission Control Protocol ({\em TCP}) that is used in internet congestion control. A fundamental building block of TCP is the Additive Increase Multiplicative Decrease ({\em AIMD}) algorithm. 
To discuss AIMD in a formal setting some preliminaries are necessary.\newline

\subsection{Notation:} The vector space of real column vectors with $n$ entries is denoted by
$\R^n$ with elements $x =
\begin{bmatrix}
    x_1 & \ldots & x_n
\end{bmatrix}^\top$, where $x^\top$ denotes the transpose of $x$. The
positive orthant $\R^n_+$ is the set of vectors in $\R^n$ with non-negative coordinates.
For $x,y \in \R^n $, we write $x \gg y$ if $x_i > y_i$ for all
$i=1,\ldots,n$. The space of $n \times n$ matrices is denoted by $\R^{n
  \times n}$ and $\R^{n \times n}_+$ is the set of non-negative matrices,
i.e. the set of matrices in which all entries are non-negative. 
The convex
hull of a set $X$ is denoted by $\conv X$; it may be defined as the
smallest convex set containing $X$.\newline

We denote the canonical basis vectors in $\R^n$ by $e_i, i=1,\ldots,n$ and
let $e:= \sum_{i=1}^n e_i$. The standard $1$-norm is defined by $\|x\|_1 =
\sum_{i=1}^n |x_i|$, $x\in \R^n$. The closed ball of radius $\delta$
around $0$ with respect to this norm is denoted by
$\overline{B}_1(0,\delta)$. The distance of a point $x$ to a nonempty set $Z$ with
respect to the $1$-norm is then
\begin{equation*}
    \distone(x,Z) := \inf \{ \|x - z \|_1 \;;\; z \in Z \}\,.
\end{equation*}

The standard simplex $\Sigma$ in $\R^n$ is defined by
\begin{equation*}
    \Sigma := \left\{ x \in \R^n_+ \;\middle|\; \sum_{i=1}^n x_i =1  \right\}\,.
\end{equation*}
We will write $\Sigma_{n}$ if we want to emphasize that we are working in
$\R^n$. Note that we are only interested in dynamics on $\Sigma$. Thus
when we write $\overline{B}_1(0,\delta)$ we will tacitly assume that we consider the
intersection of this ball with $\Sigma$.  The relative interior of
$\Sigma$ is defined by $\ri \Sigma:= \{ x\in \Sigma \midset x_i > 0
, i=1,\ldots,n
\}$. It will be sometimes useful to use the Hilbert metric
$d_H(\cdot,\cdot)$ on $\ri \Sigma$, \cite{Hartfiel}. Recall that it is given by
\begin{equation*}
    d_H(x,y) := \max_i \log(x_i/y_i) - \min_j\log (x_j/y_j)\,,\quad x,y \in \ri \Sigma\,,
\end{equation*}
and makes $(\ri \Sigma,d_H)$ a complete metric space. A ball of radius
$\delta$ with respect to the Hilbert metric is denoted by $B_H(x,\delta)$;
again without further notice, we will understand that $B_H(x,\delta)$ is
the ball contained in $\Sigma$.  For the sake of analysis, it is sometimes
easier to work with the logarithm removed, in which case we consider
\begin{equation*}
    e^{d_H}(x,y) := \frac{\max_{i} x_i/y_i}{\min_{j} x_j/y_j} \,,\quad x,y \in \ri \Sigma\,.
\end{equation*}
Note that for $x_k, x,y,z \in \ri \Sigma$ we have $\|x_k - y \|_1 \to 0$ if
and only if  $d_H(x_k,y) \to 0$ which is in turn equivalent to
$e^{d_H}(x_k,y)\to 1$.  Furthermore,
$d_H(x,y) < d_H(z,y)$ if and only if $e^{d_H}(x,y) < e^{d_H}(z,y)$.\\

\subsection{AIMD algorithms and stochastic matrices} We have
already mentioned that the AIMD algorithm underpins TCP. We shall not describe the TCP
algorithm here. Rather, we refer the interested reader to
\cite{LowPagDoy02,low2002understanding,Sri04,ShoWirLei06} for details of TCP. 
The dynamics of networks of AIMD flows can be described as
 \begin{equation}\label{synchAIMD}
x(k+1) = A(k)x(k), 
\end{equation}
where $A(k)$ is a non-negative column stochastic matrix and $k$
enumerates the capacity events.
 The matrices $A(k)$ belong to a
finite set of matrices $\mathcal{A}$, which we now describe.  Given two
vectors $\alpha \in \ri \Sigma_n, \beta \in (0,1)^n$ we define a set
${\cal A}$ of
$2^n$
matrices as follows. Let
\begin{equation*}
    B := \left\{ \tilde \beta \in \R^n \;\middle|\; \tilde \beta_i \in \{ \beta_i ,1 \}\,, \quad i=1,\ldots,n \right \} 
\,,
\end{equation*}
which is clearly a set with $2^n$ elements. The {\em set of AIMD matrices}
is then given by 
\begin{equation}
    \label{eq:AIMDmatrix}
    {\cal A} := \left \{ \diag (\tilde \beta) + \alpha (e-\tilde \beta)^\top
\; \middle| \; \tilde \beta \in B \right \}  \,.
\end{equation}
Note that $\alpha (e-\tilde \beta)^\top \in \R^{n \times n}$ as $\alpha\in \R^{n \times 1}$ and $(e-\tilde \beta)^\top \in \R^{1 \times n}$.
Such matrix sets and the dynamics of Markov chains on $\Sigma$ defined by
${\cal A}$ have been studied in
\cite{wirth2006stochastic,shorten2007modelling,CorlKing2016}. 
We single out the matrix for which all diagonal entries are below unity and
use the convention that the matrix $A_1\in {\cal A}$ is defined using $\beta$, that is,
\begin{equation}
\label{eq:A1}
    A_1 = \diag ( \beta) + \alpha (e- \beta)^\top \,.
\end{equation}
Note that $A_1$ is a column stochastic, positive matrix. 
In particular,
$\lambda=1$ is a simple eigenvalue of $A_1$ and it is larger in magnitude than
all the other eigenvalues of $A_1$ by the Perron-Frobenius theorem.
It is easy to see that a corresponding
positive 
eigenvector is  
\begin{equation}
    \label{eq:eigvec1}
    z =
    \left[ 
       \displaystyle \frac{\alpha_1}{1-\beta_1} \quad \ldots \quad \frac{\alpha_n}{1-\beta_n}
    \right] ^\top\,.
\end{equation}
AIMD matrices have the property that they leave the subspace 
\begin{equation*}
    V := \{ x \in \R^n \midset e^\top x = 0 \} 
\end{equation*}
invariant, as they are column stochastic.  Finally, we recall the
following fact about the contractive properties of $A_1$ from
\cite{wirth2006stochastic,CorlKing2016}. In the following statement ${A}_{|V}$
denotes the restriction of $A\in {\cal A}$ to the invariant subspace $V$.

\begin{lemma}
    \label{lem:A1contr}
    Let $\alpha \in \ri \Sigma_n, \beta \in (0,1)^n$ and let ${\cal A}$ be
    the corresponding set of AIMD matrices. Then for all $A\in {\cal A}$
    we have $\|{A}_{|V}\| \leq 1$. Also there exists a constant $c\in
    (0,1)$ such that for the matrix $A_1$ defined by \eqref{eq:A1} we have
\begin{equation*}
    \| {A_1}_{|V} \|_1 = c < 1. 
\end{equation*}
\end{lemma}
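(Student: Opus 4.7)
The plan is to exploit two features separately: pure column stochasticity handles the bound $\|A_{|V}\|\le 1$ for every $A\in\mathcal{A}$, while strict entrywise positivity of the particular matrix $A_1$ is what forces the strict contraction.

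First I would observe that $V=\ker(e^\top)$ is $\mathcal{A}$-invariant, since $e^\top A = e^\top$ gives $e^\top (Ax)=e^\top x$. For any nonnegative column stochastic matrix, the induced operator norm $\|A\|_1 = \max_j \sum_i A_{ij} = 1$, so $\|A_{|V}\|_1 \le \|A\|_1 = 1$ for every $A \in \mathcal{A}$, which settles the first assertion.

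For the strict bound on $A_1$, the key point is that $A_1 = \diag(\beta) + \alpha(e-\beta)^\top$ is entrywise positive: the off-diagonal entry $(i,j)$ equals $\alpha_i(1-\beta_j)>0$ because $\alpha \gg 0$ and $\beta\in(0,1)^n$, and the diagonal entry $\beta_i + \alpha_i(1-\beta_i)$ is also strictly positive. Now take any $x\in V$ with $\|x\|_1=1$ and split $x=x^+-x^-$ into positive and negative parts with disjoint support. Since $e^\top x = 0$, we have $\|x^+\|_1 = \|x^-\|_1 = 1/2$, and setting $y^\pm := A_1 x^\pm$ gives two strictly positive vectors with $\|y^\pm\|_1 = 1/2$. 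Using $|a-b|=a+b-2\min(a,b)$ for $a,b\ge 0$, one obtains
\begin{equation*}
\|A_1 x\|_1 = \sum_{i=1}^n |y^+_i - y^-_i| = 1 - 2\sum_{i=1}^n \min(y^+_i,y^-_i) < 1,
\end{equation*}
with the strict inequality because positivity of $A_1$ forces each $y^\pm_i > 0$, hence each $\min(y^+_i,y^-_i)>0$.

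The remaining step, which I expect to be the only real subtlety, is to upgrade this pointwise strict inequality to a uniform bound $c<1$. For that I would appeal to compactness: the admissible set of pairs $(x^+,x^-)$ with $x^+,x^-\ge 0$, disjoint supports, and $\|x^+\|_1 = \|x^-\|_1 = 1/2$ is a closed, bounded (hence compact) subset of $\R^n\times\R^n$, and the functional $(x^+,x^-)\mapsto \sum_i \min((A_1 x^+)_i,(A_1 x^-)_i)$ is continuous and strictly positive on it, so it attains a positive minimum $m>0$. Taking $c := 1-2m$ gives $c\in(0,1)$ and $\|A_1 x\|_1 \le c$ for every $x\in V$ with $\|x\|_1 = 1$, which is exactly $\|{A_1}_{|V}\|_1 = c$. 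The mild obstacle is merely checking that the disjoint-support constraint does not destroy compactness; since the constraint is closed, it does not. (An alternative route would be to invoke the Dobrushin ergodicity coefficient, which for a strictly positive column stochastic matrix is automatically strictly less than one and coincides with the $1$-norm contraction constant on $V$.)
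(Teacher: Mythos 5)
Your proof is correct. Note that the paper itself does not prove Lemma~\ref{lem:A1contr}: it recalls the statement from \cite{wirth2006stochastic,CorlKing2016}, where the contraction property comes from the structure of positive column stochastic matrices --- in the language you flag in your closing remark, from the fact that the Dobrushin ergodicity coefficient of a strictly positive column stochastic matrix is strictly less than one and coincides with the induced $1$-norm of the restriction to $V$. Your argument is a sound, self-contained reconstruction of that classical fact, and each step checks out: $V$-invariance and $\|A_{|V}\|_1 \leq 1$ follow from column stochasticity exactly as you say; the splitting $x = x^+ - x^-$ with $\|x^{\pm}\|_1 = 1/2$ and the identity $\|A_1 x\|_1 = 1 - 2\sum_i \min\bigl((A_1x^+)_i,(A_1x^-)_i\bigr)$ are valid; and the compactness step works because the disjoint-support constraint $x_i^+ x_i^- = 0$ is closed, as you observe. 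Moreover, since every pair in your compact set arises as the positive/negative parts of a unit vector of $V$ (disjointness makes the correspondence a bijection), your maximization delivers the exact value $\|{A_1}_{|V}\|_1 = 1 - 2m$, not merely an upper bound, so the asserted equality holds. The only point you leave implicit is why $c = 1 - 2m$ is strictly positive, i.e.\ why $m < 1/2$: since $\sum_i \min\bigl((A_1x^+)_i,(A_1x^-)_i\bigr) \leq \|A_1 x^{\pm}\|_1 = 1/2$ with equality only if $A_1 x^+ = A_1 x^-$, the case $m = 1/2$ would force $A_1 x = 0$ for some $x \neq 0$, which is impossible because $A_1$ is nonsingular (by the matrix determinant lemma, $\det A_1 = \prod_i \beta_i \, \bigl(1 + \sum_i \alpha_i(1-\beta_i)/\beta_i\bigr) > 0$). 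With that one-line addition, the conclusion $\|{A_1}_{|V}\|_1 = c \in (0,1)$ is fully justified; what your elementary route buys, compared with simply invoking the references as the paper does, is an explicit and transparent proof that needs nothing beyond positivity of $A_1$ and compactness, while the Dobrushin-coefficient route would additionally give the closed-form bound $c \le \tau_1(A_1) = \tfrac{1}{2}\max_{j,k}\sum_i |(A_1)_{ij} - (A_1)_{ik}|$.
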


\subsection{Elementary results on AIMD} The AIMD algorithm is
often studied under the assumption that probabilities are not
place-dependent; namely, the probability that $A(k) = A \in {\cal A}$ is independent
of $k$ and $x(k)$. It shall be useful to refer to this case in the
remainder of the paper and we briefly recall relevant known results here.
Consider a probability distribution $p:{\cal A}\to [0,1], A \mapsto p_A$ on the set
${\cal A}$ of AIMD matrices. This induces a Markov chain on $\Sigma$ by
setting
  \begin{equation}
      \label{eq:AIMDfixed}
      x(k+1) = A(k) x(k)\,,\quad k\in \N\,, \quad x(0) = x_0 \in \Sigma\,,
  \end{equation}
  where $\Prob(A(k) = A) = p_A$ for $A \in {\cal A}$. In
  particular, the sequence of transition matrices $\{ A(k) \}$ is
  IID. In the sequel,
  we will consider the case in which the probabilities $p_A$ are derived
  from individual drop probabilities $\lambda_i$ of the agents. In this
  case, $\lambda_i$ is the probability that in \eqref{eq:AIMDmatrix} we
  have $\tilde{\beta}_i = \beta_i$. In this case, for every $\tilde{\beta}
  \in B$, or equivalently $A\in {\cal A}$, we have
  \begin{equation}
      \label{eq:fixeddropprob}
    \Prob\Bigl(A(k) = \diag (\tilde \beta) + \alpha (e-\tilde \beta)^\top \Bigr)
    = \prod_{\tilde \beta_i = \beta_i} \lambda _i   \prod_{\tilde \beta_i = 1} (1-\lambda_i)\,.
  \end{equation}

For the Markov process defined by \eqref{eq:AIMDfixed} it is known from the results in \cite{wirth2006stochastic,shorten2007modelling,CorlKing2016} that
  if $\lambda =
  \begin{bmatrix}
      \lambda_1 & \ldots & \lambda_n
  \end{bmatrix}
\gg 0$, then there is a unique, invariant, probability measure $\pi$ on
  $\Sigma$ for the Markov chain. We denote by $\hat{\Prob}_\lambda$ the probability measure induced on the
  sample space by the assumption of the IID probabilities $\lambda$. Then,
  for every initial state $x_0\in \Sigma$,
  we have 
  \begin{equation}
     \lim_{k\to \infty}\bar{x}(k) = 
  \xi_\lambda 
  \quad, \quad \hat{\Prob}_\lambda - \text{almost surely}  \,,
  \end{equation}
  where 
  \begin{equation}
  \fbox{$\displaystyle
\bar{x}(k) :=  \frac{1}{k+1} \sum_{j=0}^k x(j)
$}
  \end{equation}
  and
  \begin{equation}
  \label{eq:expfixedprob}
  \xi_\lambda := \
\frac{1}{\sum_{\ell=1}^n
  \frac{\alpha_\ell}{\lambda_\ell(1-\beta_\ell)}}  \
\left[ 
          \frac{\alpha_1}{\lambda_1(1-\beta_1)} \quad \ldots \quad
 \frac{\alpha_n}{\lambda_n(1-\beta_n)}
\right]^\top   \,.
  \end{equation}
  As almost sure convergence implies convergence in probability this shows
  that for every $x_0 \in \Sigma$ and $\varepsilon,\delta>0$ there exists a $k_0$ such that for
  all $k\geq k_0$ we have
  \begin{equation}
      \label{eq:convprob}
  \hat{\Prob}_\lambda\left( \left\| \bar{x}(k;x_0) - \xi_\lambda \right\|_1 >
    \delta \right) < \varepsilon \,. 
    \end{equation} 
  It will also be useful to have a uniform version of
  \eqref{eq:convprob}. 
  To this end we define
  the random matrix
\begin{equation}
\label{eq:barS}
    \bar{S}(k) := \frac{1}{k+1} \sum_{j=0}^k A(j\!-\!1) \cdots A(0)\,,
\end{equation}
with the interpretation that the summand corresponding to $k=0$ is the
identity $I_n$. 
Our interest in this expression lies in the observation
that for any initial condition $x(0) =x_0 \in \Sigma$ we have
\begin{equation}
 \bar{x}(k;x_0) = \bar{S}(k)x_0 \,;
\end{equation}
hence
    \begin{equation}
        \label{eq:corstep1}
        \bar{x}(k) - \xi_\lambda =  (\bar{S}(k) - \xi_\lambda e^\top ) x_0
        \,.
    \end{equation}

  \begin{lemma}
\label{lem:uniformfixed}
Consider the random sequence $\{\bar{S}(k)\}_{k \in \N}$ given by (\ref{eq:barS}) where
$\{A(k)\}$ is IID with probabilities given by \eqref{eq:fixeddropprob}.
Then for every
$\varepsilon,\delta>0$ there exists a $k_0\in\N$ such that
\begin{equation}
    \label{eq:averagematrix}
    \hat{\Prob}_\lambda \left( \left\| \bar{S}(k) - \xi_\lambda e^\top \right\|_1 > \delta \right) < \varepsilon 
\end{equation}
for all $k\geq
k_0$.
Hence, considering the corresponding Markov chain \eqref{eq:AIMDfixed},
 there exists a $k_0\in\N$ such that for 
all $x_0 \in \Sigma$  and all $k\ge k_0$
  \begin{equation}
      \label{eq:convprobuniform}
     \hat{\Prob}_\lambda \left( \left\| \bar{x}(k;x_0) - \xi_\lambda
       \right\|_1 > \delta \right) < \varepsilon \,.
  \end{equation}
  \end{lemma}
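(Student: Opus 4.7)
The plan is to first establish the uniform matrix bound \eqref{eq:averagematrix} by reducing it to the already-stated pointwise convergence \eqref{eq:convprob}, and then to obtain \eqref{eq:convprobuniform} as an immediate corollary via the factorisation \eqref{eq:corstep1}.

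The key observation is that the columns of $\bar{S}(k)$ are themselves realisations of $\bar{x}(k;\cdot)$ at distinguished initial data. Since each random product $A(j\!-\!1)\cdots A(0)$ is column-stochastic, so is $\bar{S}(k)$, and its $j$-th column is $\bar{S}(k)\, e_j = \bar{x}(k;e_j)$, namely the sample average of the chain started at the canonical basis vector $e_j \in \Sigma$. Hence
\begin{equation*}
\bigl\|\bar{S}(k) - \xi_\lambda e^\top\bigr\|_1
 = \max_{1 \le j \le n}\bigl\|\bar{x}(k;e_j) - \xi_\lambda\bigr\|_1\,.
\end{equation*}
To obtain \eqref{eq:averagematrix} for given $\varepsilon,\delta > 0$, I would apply \eqref{eq:convprob} with error tolerance $\varepsilon/n$ to each of the $n$ initial conditions $e_1,\ldots,e_n \in \Sigma$, take $k_0$ to be the maximum of the $n$ resulting thresholds, and close by a union bound over $j = 1,\ldots,n$.

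For the uniform trajectory bound \eqref{eq:convprobuniform}, the identity \eqref{eq:corstep1} together with $\|x_0\|_1 = 1$ for every $x_0 \in \Sigma$ yields the pathwise domination
\begin{equation*}
   \bigl\|\bar{x}(k;x_0) - \xi_\lambda\bigr\|_1
   \;=\; \bigl\|(\bar{S}(k) - \xi_\lambda e^\top)\,x_0\bigr\|_1
   \;\le\; \bigl\|\bar{S}(k) - \xi_\lambda e^\top\bigr\|_1\,.
\end{equation*}
Consequently the event on the left-hand side of \eqref{eq:convprobuniform} is contained, sample-path by sample-path and uniformly in $x_0 \in \Sigma$, in the event appearing on the left-hand side of \eqref{eq:averagematrix}, so \eqref{eq:convprobuniform} follows with exactly the same $k_0$. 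I do not anticipate any real obstacle here: the only conceptual point is that the passage from pointwise (in $x_0$) to uniform control is essentially free, because $\Sigma = \conv\{e_1,\ldots,e_n\}$ and $x_0 \mapsto \bar{x}(k;x_0) = \bar{S}(k)\,x_0$ is a random linear map, so controlling the deviation at the $n$ extreme points of $\Sigma$ already controls it, with the same probabilistic bound, at every point of $\Sigma$.
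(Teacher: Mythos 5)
Your proposal is correct and follows essentially the same route as the paper's own proof: both identify the columns of $\bar{S}(k)$ as the averaged trajectories $\bar{x}(k;e_j)$ started at the canonical basis vectors, apply the pointwise bound \eqref{eq:convprob} at each $e_j$ with tolerance $\varepsilon/n$, take the maximum of the $n$ thresholds, and conclude \eqref{eq:averagematrix} by a union bound. The passage to \eqref{eq:convprobuniform} via \eqref{eq:corstep1} and the max-column-sum characterization of the induced $1$-norm, giving the pathwise domination uniformly over $x_0 \in \Sigma$, is likewise exactly the paper's argument.
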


  \begin{proof}
    As $\Sigma$ contains the canonical basis vectors $e_i$, and the norm
    on $\R^{n \times n}$ induced by $\|\cdot\|_1$ is the  max column sum norm,
    it follows that for all $M\in \R^{n \times n}_+$
    \begin{equation}
        \label{eq:lemnormdef}
 \|M\|_1 = \max_{i=1,\ldots,n} \|Me_i \|_1 = \max \{ \|Mx \|_1 \midset  x \in \Sigma \} \,.
    \end{equation}
    Fix $\varepsilon, \delta>0$. 
    By \eqref{eq:convprob} and in view of
    \eqref{eq:corstep1}, we may choose for each $i$ an integer $k_i$ such that for all $k\geq
    k_i$ we have
     \[
     \hat{\Prob}_\lambda ( \| (\bar{S}(k) - \xi_\lambda e^\top )
    e_i \| > \delta ) < \frac{\varepsilon}{n}\,.
    \] 
    By choosing $k_0 :=
    \max\{k_1, \ldots, k_n\}$ we thus obtain for all $k\geq k_0$ that
    \begin{equation}
        \label{eq:corstep2}
        \hat{\Prob}_\lambda \left( \ \left\| \bar{S}(k) - \xi_\lambda
        e^\top  \right\|_1 > \delta \right) = 
        \hat{\Prob}_\lambda \left(  \max_{i=1,\ldots,n} \ \left\| \left( \bar{S}(k) - \xi_\lambda
        e^\top \right) e_i \right\|_1 > \delta \right) < \varepsilon\,,
    \end{equation}
    where we have used the standard estimate 
    $\hat{\Prob}_\lambda \left( \cup_{i=1}^n W_i \right) \leq \sum_{i=1}^n \hat{\Prob}_\lambda\left( W_i \right)$
  for events  $W_i$.
The claim \eqref{eq:convprobuniform} is now an immediate consequence of \eqref{eq:lemnormdef}.
~\hfill\qed
  \end{proof}

\subsection{Some comments on stochastic convergence} The main result of this paper yields conditions for almost sure convergence
of the sample paths of a Markov chain. For the benefit of the reader we
briefly point to relevant parts of the literature, where this notion is
discussed. Readers familiar with notions of stochastic 
convergence may skip this section. \\

Given a Markov chain and an initial condition, we can consider the set of
all possible sample paths $ \{ x(k;x_0,\omega) \midset \omega \in \Omega
\}$, where $\Omega$ is an index set for the set of different sample paths
or trajectories of the Markov chain. In our case, the set $\Omega$ can be
identified with the set of all sequences with values in $\{ 1,\ldots,2^n
\}$, which can be interpreted as the set of sequences $\{ A(0), A(1)
,\ldots \}$ that lead to a particular sample path. Kolmogorov's existence
theorem now states that the marginal probabilities that are induced by the
Markov chain on finite-time intervals define a probability measure
$\Prob_\Omega$ on $\Omega$, the set of sample paths, see \cite[Sections 2,
24, 36]{billingsley2009convergence}.\\

The statement that convergence to a limit happens almost surely, thus
means that the measure of the set of sample paths which are converging is
$1$; with respect to the probability measure $\Prob_\Omega$ on the sample
space. Thus almost sure convergence means that the convergence happens
with probability one, with the right interpretation of the probability
measure.\\

It is furthermore known, that almost sure convergence implies convergence
in probability, \cite{billingsley2009convergence}. The latter concept is
implicitly defined in \eqref{eq:convprobuniform}: for every
$\varepsilon>0$ and $\delta>0$ there exists a $k_0$ such that for all
$k\geq k_0$ the probability of being further away from the limit than
$\delta$ is smaller than $\varepsilon$.

\section{AIMD Based Optimization Algorithm}
\label{sec:optim}

In this section, we formally define the class of distributed optimization
problems that can be addressed using the algorithm presented in this paper.
Recall: let $n\in \N$, $C>0$ and $f_i:[0,C]\to \R$ be strictly convex and
continuously differentiable, $i=1,\ldots, n$ and consider the optimization
problem

\begin{equation}
    \label{eq:optimprob}
    \begin{aligned}
\text{minimize} \quad &        \sum_{i=1}^n f_i (\bar{x}_i)\\
\text{subject to} \quad & \sum_{i=1}^n \bar{x}_i=C \,, \quad \bar{x}_i \geq 0\,.
    \end{aligned}
\end{equation}

We are interested in finding the optimal point $x^\ast$ in which the
minimum is achieved. It is well-known that by compactness of the feasible
space an optimal solution exists; it is unique by the assumption of strict
convexity. We now formulate conditions guaranteeing that the unique,
optimal point $x^\ast$ is characterized by 
the existence of a constant $\mu^*\in \R$ such that 
\begin{equation}
\label{eq:KKTsimple}
    \sum_{i=1}^n x^\ast_i=C, \quad x^\ast_i \geq 0, \quad
    f_i' (x^\ast_i) = \mu^*, \quad i=1,\ldots, n\,.
\end{equation}
These conditions are such that the algorithm which we have briefly
motivated can be implemented.

\begin{lemma}
    \label{lem:posoptimalpoint}
    Let $n\in \N$, $C>0$ and $f_i:[0,C]\to \R$ be strictly convex and
    continuously differentiable, $i=1,\ldots,n$. Assume that there exists a
    constant $\Gamma >0$ such that for all $x
    \in [0,C]$ and all $i=1,\ldots,n$ we have
    \begin{equation}
        \label{eq:lemposoptimalpointcond}
        0 \leq \Gamma \ \frac{f'_i(x)}{x} \leq 1 \,,
    \end{equation}
    where for $x=0$ the condition \eqref{eq:lemposoptimalpointcond} is
    supposed to hold for the continuous extension of the middle term in $x=0$.
    Then
    \begin{enumerate}[(i)]
      \item There exists a unique optimal point $x^*$ for the minimization
        problem \eqref{eq:optimprob}.
      \item The optimal point $x^*$ is positive, i.e. all its entries are
        strictly positive.
      \item The optimal point is characterized by the simplified KKT
        conditions \eqref{eq:KKTsimple}.
    \end{enumerate}
\end{lemma}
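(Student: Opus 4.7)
The plan hinges on a key observation extracted from the hypothesis: the condition that $\Gamma f_i'(x)/x$ extends continuously to $x=0$ with values in $[0,1]$ forces $f_i'(0) = 0$. Indeed, since $f_i$ is continuously differentiable on $[0,C]$, the numerator $f_i'(x) \to f_i'(0)$ as $x \to 0^+$; if $f_i'(0) \neq 0$, the quotient would diverge, contradicting the existence of a finite continuous extension. Combined with strict convexity, which makes $f_i'$ strictly increasing, this yields $f_i'(0) = 0$ and $f_i'(x) > 0$ for all $x \in (0,C]$. This will be the workhorse for part (ii).

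For part (i), I would note that the feasible set $\{\bar{x} \in \R^n_+ : \sum_i \bar{x}_i = C\}$ is a scaled simplex, hence compact, and $\sum f_i$ is continuous, so an optimal point exists. Uniqueness follows because a sum of strictly convex functions is strictly convex on the affine feasible set.

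The main step is (ii). I would argue by contradiction: suppose $x^*_i = 0$ for some index $i$. Since $\sum_k x^*_k = C > 0$, pick $j \neq i$ with $x^*_j > 0$. Consider the feasible perturbation $y(\varepsilon) = x^* + \varepsilon(e_i - e_j)$ for small $\varepsilon > 0$ (it stays in the feasible set since $y_j(\varepsilon) = x^*_j - \varepsilon > 0$ for $\varepsilon < x^*_j$, and $y_i(\varepsilon) = \varepsilon > 0$). Computing the one-sided directional derivative of the objective at $\varepsilon = 0^+$ gives
\begin{equation*}
    \left. \frac{d}{d\varepsilon} \sum_{k=1}^n f_k(y_k(\varepsilon)) \right|_{\varepsilon = 0^+} = f_i'(0) - f_j'(x^*_j) = -f_j'(x^*_j) < 0\,,
\end{equation*}
using $f_i'(0) = 0$ and $f_j'(x^*_j) > 0$. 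Thus the objective strictly decreases along this feasible direction, contradicting the optimality of $x^*$. Hence all coordinates of $x^*$ are strictly positive.

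With $x^* \gg 0$ established, part (iii) becomes routine: the inequality constraints $x^*_i \geq 0$ are inactive, the equality constraint is linear (so Slater's condition holds trivially), and the problem is convex, so the KKT conditions are both necessary and sufficient. Stationarity of the Lagrangian $H$ in \eqref{eq:Lagrange} at $x^*$ yields the existence of $\mu^* \in \R$ with $f_i'(x^*_i) = \mu^*$ for every $i$, which together with primal feasibility gives exactly \eqref{eq:KKTsimple}. The main obstacle is really part (ii); once the boundary-behavior argument is in place, parts (i) and (iii) are standard convex-analysis statements.
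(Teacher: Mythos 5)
Your proposal is correct and takes essentially the same route as the paper's own proof: both arguments extract $f_i'(0)=0$ from the continuous-extension hypothesis in \eqref{eq:lemposoptimalpointcond}, rule out zero entries of the optimizer via the perturbation $x^* + \varepsilon(e_i - e_j)$ whose one-sided derivative $f_i'(0) - f_j'(x_j^*) = -f_j'(x_j^*) < 0$ exploits the strict monotonicity of $f_j'$, and then obtain \eqref{eq:KKTsimple} from the Lagrange conditions once positivity renders the inequality constraints inactive. There are no gaps; the only difference is cosmetic (you phrase (ii) as a contradiction at $x^*$, while the paper shows directly that no relative-boundary point can be optimal).
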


\begin{proof}
    (i) This is an immediate consequence of compactness of the feasible
    set and strict convexity of the cost function.

    (ii) By strict convexity of the $f_i$, the derivatives $f_i'$ are
    strictly increasing. Also \eqref{eq:lemposoptimalpointcond} implies
    that $f_i'(0) = 0$ for all $i=1,\ldots,n$ as otherwise $f_i'(x)/x$
    could not be continuously extended to $x=0$. Let  $x_0 =
    \begin{bmatrix}
        x_{0,1} & \ldots & x_{0,n}
    \end{bmatrix}$ be a point on the relative boundary of the
    simplex $\Sigma_n$, i.e., such that one of its entries equals $0$. Choose indices
    $j,\ell$ such that 
    $x_{0,j}=0, x_{0,\ell} >0$. Denote 
    \begin{equation*}
x_\varepsilon :=  \begin{bmatrix}
        x_{\varepsilon,1} & \ldots & x_{\varepsilon,n}
    \end{bmatrix} := x_0 + \varepsilon e_j -
    \varepsilon e_\ell .\end{equation*}
    Clearly, $x_\varepsilon$ satisfies all constraints of
    \eqref{eq:optimprob}
    provided
    $\varepsilon>0$ is small enough. 
    We claim that for $\varepsilon >0$        
    sufficiently small we have $\sum_{i=1}^n f_i(x_{\varepsilon,i}) <
    \sum_{i=1}^n f_i(x_{0,i})$, which shows  that $x_0$ is not an optimal
    point. To prove the claim consider 
    the derivative of the total cost with respect to $\varepsilon$ at $\varepsilon=0$. We have
    \begin{equation}
        \frac{d}{d \varepsilon}_{\varepsilon=0} \sum_{i=1}^n
        f_i(x_{\varepsilon,i}) 
       = f_j'(0)
        - f_\ell'(x_\ell) = - f_\ell'(x_\ell) < - f_\ell'(0) = 0 \,,
    \end{equation}
    where we have used that $f_\ell'$ is strictly increasing. It follows that
    $x_0$ is not an optimal point for the optimization problem
    \eqref{eq:optimprob}. As $x_0$ was arbitrary on the relative boundary
    of the simplex this proves the assertion.

(iii) It follows from (ii) that the optimal point $x^*$ for problem
\eqref{eq:optimprob} is also an optimal point for the optimization problem
\begin{equation}
    \label{eq:optimprob2}
    \begin{aligned}
\text{minimize} \quad &        \sum_{i=1}^n f_i (\bar{x}_i)\\
\text{subject to} \quad & \sum_{i=1}^n \bar{x}_i=C \,, \quad 0< \bar{x}_i
< C\,.
    \end{aligned}
\end{equation}
As the latter optimization problem is defined on an open subset of the
affine space given by the constraint $\sum_{i=1}^n \bar{x}_i=C$, it
follows that an optimal point, if it exists, satisfies the standard
Lagrange optimality conditions. As we have seen in \eqref{eq:Lagrange}, \eqref{eq:KKT} these simplify to the
conditions stated in \eqref{eq:KKTsimple}. 
\end{proof}

The implementation of the algorithm uses the current state of the
users at a given time instance $t$, which we denote by $x_i(t)$ and the
long-term average of the states of the users denoted by
$\overline{x}_i(t)$. It is the aim of the algorithm to obtain convergence
of the long-term averages to the KKT point $x^\ast$.\newline

Furthermore, the actual algorithm implemented on each agent is based on the following
assumptions. We assume that agents can infer when $\sum_{i=1}^n x_i\geq
C$. If this is not the case we assume that each agent is informed of a
constraint violation at the instant of its occurrence using binary
feedback.\footnote{This is the intermittent feedback and the limited
  communication referred to throughout the paper.} This is a capacity event
notification. Upon receipt of such a notification agent $i$ updates
the state $x_i(t)$:
\begin{eqnarray}
x_i(t^+) = \beta x_i(t^-) \nonumber
\end{eqnarray}
with probability 
\begin{equation}
    \label{eq:5}
    \lambda_i(\bar{x}_i(t^-)) = \Gamma
\frac{f_i'(\bar{x}_i(t^-))}{\bar{x}_i(t^-)}.
\end{equation}
  Note that in this
definition we implicitly assume that the assumptions of Lemma
\ref{lem:posoptimalpoint} are met. In particular,  there exists a constant $\Gamma>0$
such that $\lambda_i(\bar{x}_i) \in [0,1]$ for all values $\bar{x}_i \in
[0,C]$. This restricts the admissible choices for the cost functions
$f_i$. We will discuss different ways of treating more general functions
in Remark~\ref{rem:lambdaprops}.

At all other time instants the
rate of change of $\dot{x}_i(t)$ is chosen to be a positive quantity.
Note that the superscripts $+$ and $-$ denote the instants immediately
prior and after a capacity event notification, respectively. This leads to the
following discrete time algorithm that is implemented on each of the
agents. We assume a common time step $h$ is fixed and each agent $i$ has
an internal offset $T_i$. For the sake of abbreviation, we denote $ k_i :=
T_i + kh, {k\in\N}$ and $x_i(k_i):= x_i(T_i+kh)$.

\vspace{5mm}
\begin{algorithm}[H]
  \underline{\bf Initialization:} Each agent sets its state $x_i(0_i)$  to an arbitrary value\;
\hspace*{2.5cm}The parameter $\Gamma$ is broadcast\;
 \While{agent $i$ is active}{
  \eIf{$\sum_{\ell=1}^n x_\ell(k_i)<C$}{
   $x_i((k+1)_i) = x_i(k_i)+\alpha$ \;
   }{
   $x_i((k+1)_i) = \beta x_i(k_i)$ with probability $\lambda_i(\bar{x}_i(k_i)) = \Gamma \frac{f'_i(\bar{x}_i(k_i))}{\bar{x}_i(k_i)}$ and\\
$x_i((k+1)_i) = x_i(k_i)+\alpha$ otherwise\;
  }
 }
 \caption{AIMD algorithm run by each agent \newline}
\end{algorithm}
\vspace{5mm}

It is clear that the performance of the algorithm depends crucially on a
number of assumptions. For example, we have assumed that the time between
sample points is the same for all agents (note that a common clock is not
necessary). Also the algorithm is implemented in discrete time, while the
AIMD model we analyze has an implicit continuity assumption. This
discrepancy requires that the sample times $h$ are sufficiently small,
when compared to $C$ and $n$. We will tacitly assume that the modeling
error due to discretization effect is sufficiently small.
A few further comments are required.  \newline

\begin{remark}
The constant $\Gamma$ is chosen to ensure that each $\lambda_i(\bar{x}_i) \in (0,1)$.  Thus $\Gamma$ depends on the worst utility function and must be communicated 
to all agents prior to the algorithms use. It is a network dependent quantity that is {\bf independent} of network dimension. 
\end{remark}


We now briefly discuss how to reformulate NUM problems so that they satisfy the assumptions of our set-up. Note that the following list is not exhaustive.

\begin{remark}
\label{rem:lambdaprops}
    While the assumption that $\lambda_i(r)$ is well defined and in
    $[0,1]$ for all $r\in [0,C]$ might sound restrictive for the problem
    at hand, we note that the following modifications of the problem yield
    a feasible solution.
    \begin{enumerate}[(i)]
      \item In case the objective functions are not increasing, we can
        define the constant
        \begin{equation*}
            q := \max_{i= 1,\ldots,n} \left| f'_i(0) \right| 
        \end{equation*}
        and consider the objective functions $\tilde f_i$, given by
        $\tilde f_i(r) = f_i(r) + q r$, $r\in[0,C]$, which are now strictly
        increasing. Note that this does not change the KKT point, as
        $\tilde f_i' \equiv f_i' + q$, so that the condition that all
        derivatives are equal is met at the same point $x$.
      \item A second concern is that even if $f_i' \geq 0$ on $[0,C]$, the
        expression $f_i(r)/r$ might tend to $\infty$ as $r\to 0$,
        depending on the nature of the derivative of $f_i$ at $0$. In this
        case we may replace \eqref{eq:5} with
        \begin{equation}
            \label{relax}
             \tilde \lambda_i(r) := \min \left\{ 1 , \Gamma \frac{f'(r)}{r} \right \}\,,\quad r \in [0,C]\,. 
        \end{equation} 
        This amounts to a regularization of the optimization problem which we
        briefly outline in a simple situation. Assume that there is a
        unique point $r_\Gamma\in [0,C]$ such that $r_\Gamma = \Gamma
        f'(r_\Gamma)$. If $\lambda_i$ in \eqref{relax} were the result of
        the definition in \eqref{eq:5} the corresponding objective function would be
        \begin{equation}
            \tilde f_i(r) := \left \{
              \begin{matrix}
                  \frac{1}{2 \Gamma}r^2 &\quad & r \in [0,r_\Gamma]\\
                  f_i(r) - f_i(r_\Gamma) + \frac{1}{2}r^2_\Gamma & \quad & r \in [r_\Gamma,1] 
              \end{matrix} \right.,\quad r \in [0,C].  
        \end{equation}
        More generally, there could be several interlacing intervals, in
        which the condition $r_\Gamma < \Gamma f'(r_\Gamma)$ is satisfied
        or not. The important point here is that a decrease of $\Gamma$
        leads to a decrease of $r_\Gamma$, so that by choosing $\Gamma$
        small enough, the KKT point of the original problem will be found
        by the algorithm.
    \end{enumerate}
\end{remark}

We note that  although our analysis will be performed for a fixed
number of agents, this is not necessary in the implementation of the
algorithm. Indeed, as no information is required on the number of
agents, these may join or drop out of the network at any time and
the network will automatically readjust the KKT point given the new set of
agents.

\section{Convergence Analysis}
\label{sec:finiteav}

In this section we discuss two versions of the stochastic algorithm for
the approximation of the KKT point $x^*$. The common feature of these
algorithms is that the probabilities for backing off depend on an average
of past states. In the first version, we assume that there is a fixed
window over which the average is taken, while in the second case the
average is taken over the complete history starting at time $t_0=0$.\\

The two approaches are amenable to different methods of analysis. In the
first case the problem may be recast in terms of a homogeneous Markov
chain with state-dependent probabilities, sometimes also called an
iterated function system (IFS). In this setting classical results ensure
the existence of an attractive invariant measure and ergodicity results
follow,
\cite{barnsley1988invariant,elton1987ergodic,shorten2007modelling}. However,
the real convergence result of interest can be proved for the second
algorithm which only gives rise to a nonhomogeneous Markov chain and for
which the powerful methods that exist for the first case are not
available. The method of proof relies here on a detailed analysis of the
system dynamics using appropriate Lyapunov functions.\\

For convenience, we will assume $C=1$ in the remainder of the paper.
We consider a set of AIMD matrices for fixed additive increase parameter
$\alpha>0$ and multiplicative decrease parameter $\beta\in (0,1)$. We will
assume there are probability functions $\lambda_i :[0,1] \to [0,1]$,
$i=1,\ldots,n$ that are used by each agent to determine the probability of
responding to the intermittent feedback signal, based on an average of past
values of $x$. We assume that these functions
\begin{equation}
    \label{eq:lambdadef}
    \lambda_i : [0,1] \to [0,1], \quad i=1,\ldots,n,
\end{equation}
satisfy the following assumptions
\begin{enumerate}
  \item[(A1)] $\lambda_i$ is continuous, $i=1,\ldots,n$;
  \item[(A2)] $r \mapsto r \lambda_i(r)$ is strictly increasing on $[0,1]$,
    $i=1,\ldots,n$;
  \item[(A3)] There exists a constant $\lambda_{\min}$ such that
    $\lambda_i(r) \geq \lambda_{\min} >0 $ for all $r\in [0,1],
    i=1,\ldots,n$;
\end{enumerate}
Note that these assumptions are satisfied for the choice of probability
functions described in Section~\ref{sec:optim}. In particular, (A2) is a
consequence of convexity.  \footnote{The formulation can be extended using the same arguments to the 
  assumption that the AIMD parameters of the agents are chosen such that
  $\alpha \in \ri \Sigma, \beta \in (0,1)^n$ satisfy the added assumption
  that the quotient $\alpha_i/(1-\beta_i)$ is a constant independent of
  $i$.} We will show in Lemma~\ref{lem:Pfix} that
under the above conditions there is a unique KKT point $x^\ast \in \Sigma$.\\

It is discussed in \cite{ShoWirLei06,wirth2006stochastic} that the
dynamics of an algorithm of the type of Algorithm~1 can be well
approximated by a Markov chain of AIMD matrices. In fact, if we let
$k=0,1,2,\ldots$ be the consecutive labels of the time instances at which
the constraint is met, then the evolution from one constraint event to the
next is given by \eqref{eq:AIMDfw1} below, where $A(k)$ is one of the AIMD
matrices describing the problem.  Note that the probabilities $p_A(\cdot)$ for
the matrices $A \in {\cal A}$ are now determined by the assumption that
the agents act in a stochastically independent manner, so that the
probability of a particular drop pattern encoded in $A\in {\cal A}$ is given by the
product of the probabilities of the individual agents responding or not.\\

The system of interest is given by the iteration of AIMD matrices in the form
\begin{equation}
    \label{eq:AIMDfw1}
    x(k+1) = A(k) x(k)\,,
\end{equation}
where the matrices $A(k)$ are chosen from the set of AIMD matrices ${\cal
  A}$ using a probability distribution that depends on the history of the
sample path.  Specifically, we consider the following two cases:\\

(i) {\bf finite averaging:} We consider a fixed time window of length $T$.
For $k\geq T-1$ consider the average
\begin{equation}
    \label{eq:finitewindow}
    \fbox{$\displaystyle
    \bar{x}_{T}(k) := \frac{1}{T} \sum_{j=0}^{T-1} x(k-j)
    $}
\end{equation}
and suppose  that there are probability functions $p_A :\Sigma \to [0,1]$,
$A\in {\cal A}$ such that
\begin{equation}
    \label{eq:probdef-fw}
    \Prob(A(k) = A) = p_A(\bar{x}_{T}(k))\,.
\end{equation}
(i) {\bf long-term averaging:} 
In this situation, we consider the average
\begin{equation}
    \label{eq:average}
    \fbox{$\displaystyle \bar{x}(k) := \frac{1}{k+1} \sum_{j=0}^k x(j)$}
\end{equation}
for $k=0,1,\ldots$
and suppose  that there are probability functions $p_A :\Sigma \to [0,1]$,
$A\in {\cal A}$ such that
\begin{equation}
    \label{eq:probdef}
    \Prob(A(k) = A) = p_A(\bar{x}(k))\,.
\end{equation}

\subsection{Finite Averaging}

The condition~\eqref{eq:probdef-fw} needs to be interpreted along sample
paths: for each specific realization of the Markov chain, the
probabilities at time $k$ are a function of the average over the
time interval $[k-T+1,\ldots,k]$ for the given realization.  We will model
this Markov chain as a Markov chain with state-dependent probabilities
on the space $\Sigma^T$.  In view of the evolution \eqref{eq:AIMDfw1} with
\eqref{eq:finitewindow}, \eqref{eq:probdef-fw}, define the new variable
\begin{equation}
    \label{eq:finitewinstate}
    z(k) :=
    \begin{bmatrix}
        x(k) & \frac{1}{2} \bigl(x(k) + x(k-1)\bigr) & \ldots &
        \frac{1}{T} \bigl( x(k) + \ldots + x(k-T+1)\bigr)
    \end{bmatrix}^\top \,.
\end{equation}
It is then easy to see that the evolution of $z(k)$ is described by the
Markov chain
\begin{equation}
    \label{eq:MPfinitetime}
    z(k+1) =
    \begin{bmatrix}
        A(k) & 0 & & \ldots & &0 \\
        \frac{1}{2} \left( A(k) + I \right) & 0 & & \\
        \frac{1}{3} A(k) & \frac{2}{3} I & 0 & &&\vdots\\
            \vdots & 0& \ddots & \ddots && \\
            \vdots & & & \ddots & 0  & 0\\
        \frac{1}{T} A(k) & 0 & \ldots & 0 & \frac{T-1}{T} I & 0
    \end{bmatrix}
z(k) =: A_T(k) z(k)\,.
\end{equation}

Given $A\in {\cal A}$, we denote by $A_T \in \R^{Tn \times Tn}$ the matrix
obtained from $A$ through the construction in \eqref{eq:MPfinitetime}.
Note that each matrix $A\in {\cal A}$ uniquely defines a matrix
$A_{T}$ and the set of possible matrices ${\cal A}_{T}$ occurring in the
Markov chain \eqref{eq:MPfinitetime} is defined in this way. The Markov
chain is thus defined with the place-dependent probabilities
\begin{equation}
    \label{eq:placedeplift}
    \Prob \left( A_T(k) = A_{T} \right) = p_A\left(z_T(k)\right)\,,
\end{equation}
where $A \in {\cal A}$ and
where $z_T(k) \in \Sigma$ denotes the $T$th component vector of $z(k)$.
The following norm on $\R^{Tn}$ simplifies the analysis of the Markov
chain considerably, as it reveals its contractive properties. We define
\begin{equation*}
    \|z\|_T := \max_{i=1,\ldots,T} \| z_i\|_1 \,, \quad \text{where} \quad  z =
      \begin{bmatrix}
          z_1^\top & \ldots & z_T^\top
      \end{bmatrix}^\top, z_i \in \R^n , i=1,\ldots,T\,.
\end{equation*}

\begin{lemma}
\label{lem:ATcont}
    \begin{enumerate}[(i)]
      \item For all ${A}_T \in {\cal A}_T$ the matrix norm induced by
        $\|\cdot\|_T$ satisfies
        \begin{equation}
            \label{eq:normT1}
            \| A_T \|_T \leq 1\,.
        \end{equation}
      \item The subspace
        \begin{equation}
            \label{eq:WTdef}
            W := \{ z \in \R^{Tn} \midset e^\top z_i = 0\,, \forall\ i=1,\ldots,T \} 
        \end{equation}
        is invariant under all $A_T \in {\cal A}_T$.
\item %
For all $z\in W$, $A_{T} \in {\cal A}_T$ it holds that
\begin{equation*}
    \|A_{T} z \|_T = \|z\|_T \quad \Rightarrow \quad A z_1=z_1\,.
\end{equation*}
In particular, we have 
        \begin{equation}
            \label{eq:ATestimate}
            \left\| A_{T,1}{}_{|W} \right\|_{T} \leq \frac{c + T-1}{T} <1\,,
        \end{equation}
where $c<1$ is the constant given by Lemma~\ref{lem:A1contr}.
    \end{enumerate}
\end{lemma}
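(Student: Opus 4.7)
The plan is to handle parts (i) and (ii) by direct block-by-block computation, and to reserve the real work for (iii), which hinges on understanding when equality is attained in the $\ell^1$ triangle inequality for AIMD matrices.

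For (i) I would exploit the block structure of $A_T$ and the column-stochasticity of $A$. Each block $(A_T z)_j$ is either $A z_1$ (for $j=1$), $\tfrac{1}{2}(A+I) z_1$ (for $j=2$), or $\tfrac{1}{j} A z_1 + \tfrac{j-1}{j} z_{j-1}$ (for $j \geq 3$); the triangle inequality together with $\|A z_1\|_1 \leq \|z_1\|_1$ bounds each block by a convex combination of $\|z_1\|_1$ and $\|z_{j-1}\|_1$, and hence by $\|z\|_T$. The invariance in (ii) is equally routine: applying $e^\top$ to each block and using $e^\top A = e^\top$ reduces every component to a linear combination of the identities $e^\top z_i = 0$.

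The substantive content lies in (iii). The implication reduces to a structural lemma about the underlying AIMD matrices: for $A \in \mathcal{A}$ and $y \in V$, the equality $\|A y\|_1 = \|y\|_1$ already forces $A y = y$. To establish this I would decompose $y = y^+ - y^-$ into its disjointly supported positive and negative parts; since $A \geq 0$ is column stochastic, $\|A y^\pm\|_1 = \|y^\pm\|_1$, and equality in $\|A y\|_1 \leq \|A y^+\|_1 + \|A y^-\|_1$ forces $A y^+$ and $A y^-$ to have disjoint supports coordinate-by-coordinate. The condition $y \in V$ gives $\|y^+\|_1 = \|y^-\|_1$, so either $y = 0$ (nothing to prove) or both $y^\pm$ are nonzero. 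Using the explicit form $A = \diag(\tilde\beta) + \alpha (e - \tilde\beta)^\top$ with $\alpha \gg 0$, the columns indexed by $S := \{i : \tilde\beta_i < 1\}$ are strictly positive, while those in $S^c$ coincide with the corresponding $e_i$. If either of $y^\pm$ placed any mass on $S$, its image under $A$ would have full support, contradicting disjointness; hence both $y^\pm$ are supported on $S^c$, on which $A$ acts as the identity, yielding $A y = y$.

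With the structural lemma in hand, the implication in (iii) is straightforward: pick a block $j$ attaining $\|A_T z\|_T$; the convex-combination estimates from (i), combined with the assumed equality, force $\|A z_1\|_1 = \|z_1\|_1 = \|z\|_T$, and the lemma applied to $z_1 \in V$ delivers $A z_1 = z_1$. The quantitative bound for $A_{T,1}$ then follows by rerunning the block estimates of (i), but invoking Lemma~\ref{lem:A1contr} to replace the trivial bound $\|A_1 z_1\|_1 \leq \|z_1\|_1$ by the sharper $\|A_1 z_1\|_1 \leq c \|z_1\|_1$ on $V$; the $j$-th block is then bounded by $\tfrac{c + j - 1}{j}\|z\|_T$, and since $j \mapsto 1 + \tfrac{c-1}{j}$ is increasing in $j$ (as $c < 1$), the maximum over $j \in \{1,\ldots,T\}$ occurs at $j = T$, giving the claimed $\tfrac{c + T - 1}{T}$. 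The main obstacle is the structural lemma at the heart of (iii); once that is in place, the rest is bookkeeping with the triangle inequality.
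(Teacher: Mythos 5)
Your proposal is correct, and its overall skeleton coincides with the paper's proof: the same block-wise convex-combination estimates give (i) and the implication in (iii), and the same computation with $\|A_1 z_1\|_1 \leq c\|z_1\|_1$ and the monotonicity of $j \mapsto (c+j-1)/j$ yields \eqref{eq:ATestimate}. The one genuine difference is at the heart of (iii): the paper does not prove the rigidity fact that $\|Az_1\|_1 = \|z_1\|_1$ with $e^\top z_1 = 0$ forces $Az_1 = z_1$, but imports it from \cite[Lemma~3.8]{wirth2006stochastic}, whereas you prove it from scratch via the decomposition $y = y^+ - y^-$, the observation that equality in the triangle inequality forces $Ay^+$ and $Ay^-$ to have disjoint supports, and the explicit column structure of $A = \diag(\tilde\beta) + \alpha(e-\tilde\beta)^\top$ (strictly positive columns on $S = \{i : \tilde\beta_i < 1\}$, canonical basis vectors on $S^c$). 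Your argument is sound; the two points where care is needed are exactly the ones you handle: the condition $y \in V$ gives $\|y^+\|_1 = \|y^-\|_1$, so in the nontrivial case both parts are nonzero, and it is this nonvanishing of the \emph{other} part that turns full support of $Ay^\pm$ into a contradiction with disjointness (for merely nonnegative $y$ the conclusion $Ay = y$ would be false, even though $\|Ay\|_1 = \|y\|_1$ always holds by column stochasticity). What your route buys is a self-contained lemma in place of an external citation, at the modest cost of a slightly longer proof; the deduction that equality of $\|A_T z\|_T$ and $\|z\|_T$ forces $\|Az_1\|_1 = \|z_1\|_1 = \|z\|_T$ is organized a little differently from the paper (you maximize over the attaining block, the paper argues via indices $i$ with $\|z_i\|_1 < \|z\|_T$), but both versions are correct and essentially equivalent.
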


\begin{proof}
    (i) This is a straightforward calculation.

    (ii) This is an easy consequence of $e^\top A = e^\top, A\in {\cal
      A}$.

    (iii) Consider $A\in {\cal A}$ and the corresponding matrix $A_T\in
    {\cal A}_T$. Assume that $\|A_{T} z\|_T= \|z\|_T$ and consider an index $i$
    such that $\|z\|_T > \|z_i\|_1$. As $\|A\|_1 \leq 1$, it follows that 
    \begin{equation*}
        \|(A_{T} z)_{i+1}\|_1 =
        \left\|\frac{1}{i+1} A z_1 + \frac{i}{i+1} z_i
        \right\|_1 < \|z\|_T \,.
    \end{equation*}
    Thus if $\|A_{T} z\|_T= \|z\|_T$ then necessarily $\| (1/(i+1)) A z_1 +
    (i/(i+1)) z_{i} \|_1 = \|z_i\|_1$ for an index $i$ such that $\|z_i\|_1$ is
    maximal. Also we may assume that $i<T$. As $\|A\|_1 \leq 1$ it follows that $\| A z_1 \|_1 =
    \|z_i\|_1$ and so $\|z_1\|_1 = \|Az_1\|_1 = \|z_i\|_1$ as $\|z_i\|_1$ was
    maximal. Now it is known for the matrices $A\in {\cal A}$, that
    $\|z_1\|_1 = \|Az_1\|_1, e^\top z_1 = 0$ implies that $Az_1 = z_1$,
    \cite[Lemma~3.8]{wirth2006stochastic}. Finally,
    \begin{align*}
        \left\| A_{T,1}
        z\right\|_T
= \max_{i=1,\ldots,T} \left\| \frac{1}{i} A_1z_1 + \frac{i-1}{i}z_{i-1}  \right\|_1
\leq  \max_{i=1,\ldots,T} \left\{ \frac{c}{i} \left\|z_1 \right\|_1 
     + \frac{i-1}{i} \left\|z_{i-1}  \right\|_1 \right\}  \leq
\max_{i=1,\ldots,T} \left\{ \frac{c}{i} 
     + \frac{i-1}{i} \right\}  \left\|z_T  \right\|_T
 \,.
\end{align*}
This show the assertion. \hfill\qed
\end{proof}

The previous result shows that the iteration of random choices of the
$A_{T,i}$ is contractive when studied with respect to a suitable
norm. This lies the foundation for proving the existence of a unique
invariant and attractive measure for the Markov chain. Before proving
this we need an assumption on the probability functions $\lambda_i$ that
guarantees strong contractivity on average.

\begin{theorem}{\bf (Invariant Measure)}
    \label{t:invmeasure}
    Assume that the probability functions $\lambda_i$ satisfy (A1)-(A3)
    and are Lipschitz continuous. Then for all $T\geq 1$, there exists a
    unique invariant and attractive measure $\pi^T$ on
    $\Sigma^T$. Furthermore, for all $z_0 \in \Sigma^T$, we have that
    almost surely
    \begin{equation}
        \label{eq:ergodic}
         \lim_{k\to \infty} \frac{1}{k} \sum_{\ell =0}^k z(l;z_0) = \int_{\Sigma^T}
  z \ \, \mathrm{d}\pi^T(z) =  \Expect(\pi^T)\,.
    \end{equation}
\end{theorem}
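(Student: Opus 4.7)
I would view the Markov chain \eqref{eq:MPfinitetime}--\eqref{eq:placedeplift} as an iterated function system (IFS) on the compact metric space $(\Sigma^T, \|\cdot\|_T)$ driven by the finite family of affine maps $\{z \mapsto A_T z : A \in {\cal A}\}$ together with the place-dependent selection probabilities \eqref{eq:placedeplift}. Once this framing is in place, the conclusion would follow from the classical invariant-measure and ergodic theorems for IFS with place-dependent probabilities of Barnsley \cite{barnsley1988invariant} and Elton \cite{elton1987ergodic}, as already applied to AIMD chains in \cite{shorten2007modelling}. Three ingredients are needed: compactness of the state space (immediate); average contractivity of the family with respect to the metric induced by $\|\cdot\|_T$; sufficient regularity of the selection probabilities.

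The contractivity ingredient is supplied directly by Lemma~\ref{lem:ATcont}. For any $z, z' \in \Sigma^T$ the difference $z - z'$ lies in the subspace $W$ of Lemma~\ref{lem:ATcont}(ii), because each block of $z$ and $z'$ lies in $\Sigma$. Applying Lemma~\ref{lem:ATcont}(iii) to $z - z'$ therefore yields $\|A_{T,1}(z-z')\|_T \leq \frac{c+T-1}{T}\|z-z'\|_T$, while for every other $A_T \in {\cal A}_T$ the map is merely non-expansive in $\|\cdot\|_T$. Combining this with assumption (A3) and \eqref{eq:fixeddropprob}, the strictly contractive branch $A_{T,1}$ is selected with probability at least $\lambda_{\min}^n > 0$ uniformly in the current state. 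Taking conditional expectation at a place $\bar z \in \Sigma^T$ gives
\[
\Expect\bigl( \|A_T(k)(z-z')\|_T \bigm| z(k) = \bar z \bigr) \ \leq \ \Bigl( 1 - \lambda_{\min}^n \frac{1-c}{T} \Bigr) \|z-z'\|_T \ =: \ \rho \, \|z-z'\|_T,
\]
with $\rho < 1$ independent of $\bar z$, which is precisely the ``contraction on average'' hypothesis in the Barnsley--Elton sense.

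For the regularity ingredient, the hypothesized Lipschitz continuity of each $\lambda_i$ transfers through \eqref{eq:fixeddropprob}, which is polynomial in the $\lambda_i(\bar z)$, to Lipschitz continuity of every $p_A : \Sigma \to [0,1]$ and hence of the composed selection probabilities $z \mapsto p_A(z_T)$ on $\Sigma^T$. Barnsley's theorem \cite{barnsley1988invariant} then produces a unique invariant probability $\pi^T$ on $\Sigma^T$ that is attractive in the weak-$*$ topology, and Elton's ergodic theorem \cite{elton1987ergodic} applied coordinatewise to the continuous bounded observable $z \mapsto z$ finally delivers the almost-sure Cesaro statement \eqref{eq:ergodic} from any starting point $z_0 \in \Sigma^T$.

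The main obstacle is really matching the machinery of \cite{barnsley1988invariant,elton1987ergodic} to the setting of this paper rather than proving anything new: one has to translate the subspace-contractivity statement of Lemma~\ref{lem:ATcont} into a strict average contraction on \emph{differences} within $\Sigma^T$, and to verify the Lipschitz and positive-lower-bound hypotheses in the exact form required there. The somewhat delicate point is that all $A_T \in {\cal A}_T \setminus \{A_{T,1}\}$ are only non-expansive; this is absorbed by the uniform positive lower bound $\lambda_{\min}^n$ on the probability of the strictly contractive branch $A_{T,1}$, which is what secures $\rho < 1$ independently of the current state.
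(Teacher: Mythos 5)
Your proposal is correct and follows essentially the same route as the paper: both verify the Barnsley--Demko--Elton--Geronimo hypotheses by combining the non-expansiveness of all $A_T \in {\cal A}_T$ with the strict contraction of $A_{T,1}$ from Lemma~\ref{lem:ATcont}\,(iii) and the uniform lower bound $\lambda_{\min}^n$ on the probability of selecting that branch (the paper states this as the supremum condition \eqref{eq:supineq} plus \eqref{posprobbound} with $\gamma = \lambda_{\min}^{2n}$, which your state-uniform conditional-expectation bound $\rho = 1 - \lambda_{\min}^n(1-c)/T < 1$ captures equivalently), before invoking \cite{barnsley1988invariant} for the attractive invariant measure and \cite{elton1987ergodic} for the almost-sure Cesaro limit. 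Your handling of regularity (Lipschitz continuity of $z \mapsto p_A(z_T)$ via the product formula \eqref{eq:fixeddropprob}) and of the fact that differences $z - z'$ lie in $W$ matches the paper's implicit reasoning.
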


\begin{remark}
    Stronger ergodicity results hold as detailed in
    \cite{barnsley1988invariant,elton1987ergodic}. We skip these for the
    sake of brevity.
\end{remark}

\begin{proof}
    It is easy to show that the sufficient conditions provided in
    \cite{barnsley1988invariant} are satisfied. In particular, these
    conditions can be met by requiring that
    \begin{equation}
    \label{eq:supineq}
        \sup_{z,w\in \Sigma}\ \sum_{A_T \in {\cal A}_T}
 p_A(z) \frac{\|A_{T}(z-w)\|_T}{\| z-w \|_T} < 1 \,.
    \end{equation}
    Note that $z-w \in W$, so Lemma~\ref{lem:ATcont}\,(i) immediately
    implies that the sum does not exceed $1$. Assumption (A3) now ensures
    that for each $z\in \Sigma$, the probability $p_1(z) \geq \lambda_{\min{}}^n
    >0$.  Thus the probability of the matrix $A_1$ is bounded away from
    zero.  Now Lemma~\ref{lem:ATcont}\,(iii) states that $\|A_{T,1}(z-w)\|
    \leq (c+T-1)/T \| z-w \|< \| z-w \|$. As $p_1(z) >0$ for all $z$, we
    see that the supremum in \eqref{eq:supineq} is bounded away from $1$.\\

The final condition that needs to be satisfied is that there exists a
constant $r< 1$ and  a
 constant $\gamma>0$ such that for all $z,w \in \Sigma$ we have
\begin{equation}
    \label{posprobbound}
  \sum_{A \in {\cal A}, \|A_{T}(z-w)\|_T \leq r < 1}    p_A(z)p_A(w) \geq \gamma > 0 \,.
\end{equation}
In our situation, this is clear as $p_1(z) > \lambda_{\min}^n>0$ for all $z
\in \Sigma$. This implies that we may choose $\gamma =
\lambda_{\min}^{2n}$ in \eqref{posprobbound}.

    By Theorem~2.1 in \cite{barnsley1988invariant} the existence of an
    attractive invariant measure follows. Uniqueness is then a consequence
    of attractivity. The ergodic property \eqref{eq:ergodic} now follows
    from \cite{elton1987ergodic}. \hfill\qed
\end{proof}

\begin{remark}
    The previous results shows that the AIMD system is indeed converging
    in a strong sense; in particular, long term averages converge almost
    surely. Simulations suggest, that this limit gets closer to the KKT
    point as $T$ increases. Also for large $T$, with high probability along
    a sample path, the average of the windows of size $T$ is close to the
    KKT point. 
\end{remark}

\subsection{Long-Term Averaging}

We now turn to the situation in which the probabilities for choosing the
matrices depend on the long-term average of the realization.  Note that
\eqref{eq:AIMDfw1} together with \eqref{eq:probdef} do not define a Markov
chain on $\Sigma$, as the probabilities do not depend on the current state
$x(k)$ but rather on the complete history of a sample path.  In order to
obtain a formulation as a Markov chain we include the average in the
state space. 
To this end we introduce the new random variable 
\[
z(k) :=
\begin{bmatrix}
    x(k)^\top & \bar{x}(k)^\top
\end{bmatrix}^\top \,.
\]
It follows from the definition of $\bar{x}(k)$ in \eqref{eq:average} that
$\bar{x}(0) = x_0$ and
\begin{equation}
\bar{x}(k+1) = \frac{1}{k+2} x(k+1) + \frac{k+1}{k+2}\bar{x}(k) \,.
\end{equation}
Hence $z(k)$ evolves according to
\begin{equation}
    \label{eq:AIMD2infav}
    z(k+1) 
    = \tilde{A}(k) z(k)\,,
\end{equation}
where
\begin{equation}
    \label{eq:Atilde}
    \tilde{A}(k) :=
    \begin{bmatrix}
        A(k)& 0 \\
        ~\\ \frac{1}{k+2} A(k)& \frac{k+1}{k+2} I
    \end{bmatrix} 
    \end{equation}
Given $A\in {\cal A}$, we introduce 
the matrices
\begin{equation}
    \label{eq:Abar}
    A_{LTA} (k) :=
    \begin{bmatrix}
        A& 0
        ~\\ \\ \frac{1}{k+2} A & \frac{k+1}{k+2} I
    \end{bmatrix}, 
    \quad k =0,1, \ldots \,.
\end{equation}
Then, for all $y \in \Sigma$, we have the conditional probabilities 
\begin{equation}
    \label{eq:probdef2}
\Prob\bigl(\tilde{A}(k) = A_{LTA} (k)\big|\  \bar{x}(k) = y\bigr) = p_A(y) \,.
\end{equation}

This defines a nonhomogeneous Markov chain with place-dependent
probabilities. Note that the nonhomogeneity comes from the time-varying
nature of the matrices $A_{LTA} (k)$, whereas the functions $p_A(\cdot)$
describing the place-dependent probabilities do not depend on time.\\

To obtain contractive properties of the Markov chain \eqref{eq:AIMD2infav}
it will be of interest to study the matrices ${A}_{LTA} (k)$ using a particular
norm. We define a norm on $\R^{2n}$ by setting for $x,y \in \R^n$
\begin{equation}
    \label{eq:normdef}
    \left\| \left[
      \begin{matrix}
          x\\ y
      \end{matrix} \right]
\right\| := \max \{ \|x\|_1 , \|y\|_1 \} \,.
\end{equation}
The matrix norm induced by this vector on $\R^{2n \times 2n}$ is also
denoted by $\|\cdot\|$.

In the following we use the notation ${{\cal A}}_{LTA}  := \{ 
{A}_{LTA}(k)
\midset 
A \in {\cal A} , k\in\N \}$, which represents the set of all possible
matrices appearing in \eqref{eq:AIMD2infav}.

\begin{lemma}
    \begin{enumerate}[(i)]
      \item For all $
{A}_{LTA}  \in 
{{\cal A}}_{LTA} $
        \begin{equation}
            \label{eq:norm1}
            \|
{A}_{LTA}  \| \leq 1\,.
        \end{equation}
      \item The subspace
        \begin{equation}
            \label{eq:Wdef}
            W := \{ ( x,y) \in \R^{2n} \midset e^\top x = e^\top y = 0 \} 
        \end{equation}
        is invariant under all $
{A}_{LTA}  \in 
{{\cal A}}_{LTA}$.
    \end{enumerate}
\end{lemma}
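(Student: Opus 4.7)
The plan is to verify both statements by direct calculation using the block structure of $A_{LTA}(k)$ together with the fact that each $A\in\mathcal{A}$ is a column-stochastic matrix whose $1$-norm on $\R^n$ satisfies $\|A\|_1 \leq 1$.

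For part (i), I would take an arbitrary vector $\begin{bmatrix} x^\top & y^\top\end{bmatrix}^\top \in \R^{2n}$ and compute the image under $A_{LTA}(k)$, which is $\begin{bmatrix}(Ax)^\top & (\tfrac{1}{k+2}Ax + \tfrac{k+1}{k+2}y)^\top\end{bmatrix}^\top$. The first block is bounded by $\|Ax\|_1 \leq \|A\|_1 \|x\|_1 \leq \|x\|_1 \leq \max\{\|x\|_1,\|y\|_1\}$. The second block admits the triangle inequality estimate
\begin{equation*}
\left\|\tfrac{1}{k+2}Ax + \tfrac{k+1}{k+2}y\right\|_1 \ \leq\ \tfrac{1}{k+2}\|x\|_1 + \tfrac{k+1}{k+2}\|y\|_1 \ \leq\ \max\{\|x\|_1,\|y\|_1\},
\end{equation*}
since the right-hand side is a convex combination of $\|x\|_1$ and $\|y\|_1$. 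Taking the maximum over the two block outputs yields $\|A_{LTA}(k) z\| \leq \|z\|$, which gives $\|A_{LTA}(k)\| \leq 1$.

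For part (ii), I would use that each $A\in\mathcal{A}$ is column stochastic, i.e. $e^\top A = e^\top$. For $\begin{bmatrix} x^\top & y^\top\end{bmatrix}^\top \in W$ we have $e^\top x = e^\top y = 0$, so the first block of the image satisfies $e^\top (Ax) = (e^\top A)x = e^\top x = 0$, and the second block satisfies
\begin{equation*}
e^\top\!\left(\tfrac{1}{k+2}Ax + \tfrac{k+1}{k+2}y\right) \ = \ \tfrac{1}{k+2}e^\top x + \tfrac{k+1}{k+2}e^\top y \ = \ 0.
\end{equation*}
Hence $A_{LTA}(k)z \in W$, proving invariance.

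Neither claim presents a genuine obstacle: the construction of $A_{LTA}(k)$ was chosen precisely so that the column-stochasticity and $1$-norm contractivity of $A$ lift to a weak contractivity on $\R^{2n}$ under the max-of-block-norms, while preserving the two zero-sum conditions. The only mild point to keep in mind is that one must not invoke strict contractivity on $W$ here (as was the case for $A_{T,1}$ in Lemma~\ref{lem:ATcont}); the lemma as stated only claims $\|A_{LTA}\|\leq 1$, and strict contractivity of the $A_{LTA}(k)_{|W}$ would not hold uniformly in $k$ anyway, since the lower-right block $\tfrac{k+1}{k+2}I$ approaches $I$ as $k\to\infty$. That is precisely why the authors announce, at the start of the subsection, that a different proof technique (Lyapunov-based, not an IFS contraction argument) is needed in the long-term averaging case.
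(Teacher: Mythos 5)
Your proof is correct and is essentially the argument the paper intends: the paper omits the proof, stating only that it ``follows the lines of the proof of Lemma~4.1'' (the finite-averaging analogue), where part (i) is a direct block-norm calculation and part (ii) follows from $e^\top A = e^\top$ for column-stochastic $A\in{\cal A}$ --- exactly the two computations you carry out. Your closing remark about the loss of uniform strict contractivity as $k\to\infty$ also matches the paper's own discussion immediately following the lemma, so nothing is missing.
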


\begin{proof}
    The proof follows the lines of the proof of Lemma~\ref{lem:ATcont} and
    is omitted.
\end{proof}

We stress that the key point of Lemma~\ref{lem:ATcont} was item (iii),
which we used to obtain a uniform contractivity on the state space
$\Sigma^T$ of the Markov chain. A similar result can be obtained in the
present situation, but uniformity is lost due to the time-dependent nature
of the Markov chain. Unfortunately, the constant of contraction converges
to $1$. Considerable effort has been expensed on trying to transfer the
proofs of \cite{barnsley1988invariant,elton1987ergodic} to the present
situation, but to no avail. We thus pursue an entirely different angle of
attack in the proof of our main result.

\begin{theorem}{{\bf (Convergence)}}
    \label{t:convergence}
    Let the functions
    $\lambda_i$ defined in \eqref{eq:lambdadef} satisfy (A1)--(A3) and let
    $x^*$ denote the KKT point guaranteed by Lemma~\ref{lem:Pfix}.
    Consider the nonhomogeneous Markov chain \eqref{eq:AIMD2infav}. For
    any initial condition $z(0)= (x_0,  \bar{x}_0) \in \Sigma^2$ we have that
    the second component of $z(k)$ satisfies
    \begin{equation*}
        \lim_{k\to \infty}\bar{x}(k) = x^\ast \qquad \text{almost
          surely}\quad \Prob_{x_0} \,.
    \end{equation*}
\end{theorem}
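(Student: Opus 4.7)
The plan is to exploit the fact that for large $k$ the long-term average $\bar{x}(k)$ changes very slowly, since $\bar{x}(k+1)-\bar{x}(k)=O(1/k)$. Over a window of moderate length starting at time $k$, the place-dependent probabilities $\lambda_i(\bar{x}_i(\cdot))$ are therefore approximately constant and equal to $\lambda_i(\bar{x}_i(k))$, so the behaviour of the chain on that window looks like the IID fixed-probability AIMD chain with probability vector $\lambda(\bar{x}(k))$, for which Lemma~\ref{lem:uniformfixed} provides uniform control on the sample-average. The task is to upgrade this ``frozen-coefficient'' picture into an almost-sure convergence statement.

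First I would define the deterministic map $F:\Sigma\to \Sigma$ encoding the fixed-probability equilibrium,
\begin{equation*}
F(y) \ := \ \xi_{\lambda(y)} \ = \ \frac{1}{Z(y)}\left[\frac{\alpha_1}{\lambda_1(y_1)(1-\beta_1)} \ \cdots \ \frac{\alpha_n}{\lambda_n(y_n)(1-\beta_n)}\right]^\top,
\end{equation*}
where $Z(y)$ is the normalising constant and $\xi_\lambda$ is as in \eqref{eq:expfixedprob}. Assumptions (A1)--(A3) make $F$ continuous on $\Sigma$ and $F(\Sigma)\subset \ri\Sigma$; strict monotonicity of $r\mapsto r\lambda_i(r)$ from (A2) identifies the unique fixed point of $F$ with the KKT point $x^\ast$ of Lemma~\ref{lem:Pfix} and, together with Lipschitz continuity of $\lambda$, yields a contraction-type estimate for $F$ in a suitable metric (the Hilbert metric on $\ri\Sigma$, or equivalently $\|\cdot\|_1$ on compact subsets).

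Second, to transfer this to the stochastic chain, I would partition time into windows $[k_j,k_{j+1})$ with $\tau_j:=k_{j+1}-k_j$ growing with $j$: large enough for Lemma~\ref{lem:uniformfixed} to give a fixed-probability average close to $\xi_{\lambda(\bar{x}(k_j))}$ with probability at least $1-\varepsilon_j$, but small enough compared to $k_j$ that $\bar{x}(\cdot)$, and hence $\lambda(\bar{x}(\cdot))$, varies by at most $\eta_j$ across the window. Using the decomposition
\begin{equation*}
\bar{x}(k_{j+1}) \ = \ (1-\theta_j)\,\bar{x}(k_j) \ + \ \theta_j\,\hat{x}_j, \qquad \theta_j \ = \ \tfrac{\tau_j}{k_{j+1}+1},
\end{equation*}
where $\hat{x}_j$ is the window average, together with the frozen-coefficient approximation and the Lipschitz bound on $\lambda$, I obtain
\begin{equation*}
\bar{x}(k_{j+1}) \ = \ (1-\theta_j)\bar{x}(k_j) \ + \ \theta_j F(\bar{x}(k_j)) \ + \ \theta_j R_j,
\end{equation*}
with a residual $R_j$ that is small in probability (in $\|\cdot\|_1$) uniformly in the starting state, by Lemma~\ref{lem:uniformfixed} together with the Lipschitz bound. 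Contraction of $F$ around $x^\ast$ then gives, for a Lyapunov function $V(y):=\distone(y,x^\ast)$, a one-step supermartingale-type inequality of the form
\begin{equation*}
\Expect\bigl[\,V(\bar{x}(k_{j+1}))\bigm|\mathcal{F}_{k_j}\bigr] \ \leq \ (1-c\,\theta_j)\,V(\bar{x}(k_j)) \ + \ \theta_j \,\rho_j,
\end{equation*}
with $\rho_j\to 0$. Choosing $\tau_j$ so that $\sum_j \theta_j=\infty$ while $\sum_j \theta_j\rho_j<\infty$ (and the probabilistic residuals are summable in the sense required by Borel--Cantelli), a Robbins--Siegmund argument yields $V(\bar{x}(k_j))\to 0$ almost surely along the subsequence, and the slow-drift bound $\|\bar{x}(k+1)-\bar{x}(k)\|_1\leq 2/(k+2)$ interpolates to give $\bar{x}(k)\to x^\ast$ almost surely.

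The main obstacle I expect is the two-scale balancing in the third step. The matrices $A_{LTA}(k)$ lose contractivity as $k\to\infty$ (the block $\frac{k+1}{k+2}I$ approaches the identity), so the Lyapunov decrement per window is only $O(\theta_j)=O(\tau_j/k_j)$; meanwhile Lemma~\ref{lem:uniformfixed} only makes the frozen-probability residual small when $\tau_j$ itself is large, and the Lipschitz-in-$\lambda$ error over the window is of order $\eta_j = O(\tau_j/k_j)$. Choosing $\tau_j$ to make all three effects compatible, and then controlling the tail probabilities of the residuals uniformly in the random starting state $\bar{x}(k_j)$ so as to extract an almost-sure conclusion rather than just convergence in probability, is where the bulk of the technical effort will go. This is the ``robustness properties of the deterministic iterator'' the introduction refers to, and it is what replaces the direct IFS machinery of \cite{barnsley1988invariant,elton1987ergodic} that fails here because the contraction constant of $A_{LTA}(k)$ tends to $1$.
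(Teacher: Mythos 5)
Your high-level architecture is exactly the paper's: your map $F$ is the paper's $P$, your window recursion $\bar{x}(k_{j+1})=(1-\theta_j)\bar{x}(k_j)+\theta_j F(\bar{x}(k_j))+\theta_j R_j$ is precisely \eqref{eq:explaindyn2}, and your closing diagnosis of the two-timescale balancing matches the paper's preamble. The genuine gap is in your central Lyapunov step. The supermartingale inequality $\Expect\bigl[V(\bar{x}(k_{j+1}))\mid \mathcal{F}_{k_j}\bigr]\leq (1-c\,\theta_j)V(\bar{x}(k_j))+\theta_j\rho_j$ with $V(y)=\distone(y,x^\ast)$ and a \emph{uniform} $c>0$ is not available under (A1)--(A3), and the paper never proves anything of this form. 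What the structure actually yields is: (i) in the $1$-norm, contraction of the distance to the \emph{convex set} $P_{\mathrm{co}}(\delta)=\conv(P(\Sigma)+\overline{B}_1(0,\delta))$ --- not to the point $x^\ast$ --- via the convexity argument of Lemma~\ref{lem:convinv}; and (ii) near $x^\ast$, via the ratio argument powered by (A2), a Hilbert-metric decrement $e^{d_H}(R_\varepsilon(x),x^\ast)<(1-C_\eta\varepsilon)\,e^{d_H}(x,x^\ast)$ valid only when $d_H(x,x^\ast)\geq\eta$, with $C_\eta$ degenerating as $\eta\to 0$ (Corollary~\ref{c:Pconvestimate}). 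Your parenthetical ``or equivalently $\|\cdot\|_1$ on compact subsets'' does not repair this: metric equivalence transfers a $d_H$-contraction into a $\distone$-bound only up to a multiplicative constant exceeding one, which destroys a per-step factor $(1-c\theta_j)$ once $\theta_j\to 0$. Moreover, the $d_H$-perturbation bound (Lemma~\ref{lem:pertbound}) holds only on compacta bounded away from $\bd\Sigma$, so a preliminary phase with a different Lyapunov function is unavoidable. This is why the paper runs three stages instead of one Robbins--Siegmund pass: first hit $P_{\mathrm{co}}(2\bar{\delta})$ using $\distone(\cdot,P_{\mathrm{co}}(\bar\delta))$, then descend in $d_H$ to $B_H(x^\ast,\eta)$ using Corollary~\ref{c:robustdown} and Lemma~\ref{lem:perturbationbound}, then rerun the whole argument at level $\eta/2$ and rule out infinitely many excursions --- obtaining ``eventually inside $B_H(x^\ast,\eta)$ for every $\eta$'' rather than $V\to 0$ from a uniform drift.

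A second, smaller gap concerns the probabilistic engine. You want growing windows $\tau_j$ with bad-window probabilities small enough for Borel--Cantelli, and you invoke ``Lipschitz continuity of $\lambda$''; but Theorem~\ref{t:convergence} assumes only (A1)--(A3) (Lipschitz appears only in the finite-window Theorem~\ref{t:invmeasure}), and Lemma~\ref{lem:uniformfixed} is uniform in the initial state $x_0$ for one \emph{fixed} $\lambda$, not uniform in the frozen parameter $y=\bar{x}(k_j)$, which is random. The paper closes this with a separate compactness-plus-Tchebycheff argument (Lemma~\ref{lem:propvi}) producing a single \emph{fixed} window length $m$ valid for all $y\in\Sigma$ at a \emph{constant} failure probability $\theta$; it then never makes failure probabilities summable, but instead chooses $\theta$ so the mean per-window drift is negative and applies the random-walk Lemma~\ref{lem:bill} (Kolmogorov's convergence theorem) both for divergence of $\sum_\ell \varepsilon_\ell U_\ell$ to $-\infty$ and for the sup-of-partial-sums control that bounds excursions out of $P_{\mathrm{co}}(3\bar\delta)$ and out of $B_H(x^\ast,\eta)$. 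Your growing-window/Borel--Cantelli variant could plausibly be carried through once the parameter-uniformity is established by the compactness argument rather than by an unavailable Lipschitz hypothesis, but the contraction gap of the previous paragraph must be repaired in any case, e.g.\ by replacing the conclusion $V\to 0$ with the paper's nested-ball scheme.
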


\begin{remark}
    Our result says that by local
    modification of the individual probabilities the agents can ensure
    almost sure convergence to the optimum. Inter-agent communication is
    not necessary; rather the only information needed is a $1$-bit intermittent message 
    to all agents that a capacity event has occurred. This minimal information
    suffices for convergence.  
    The main results of this section are the following:
    \begin{enumerate}
      \item an ergodicity result for the algorithm with
        finite-averaging;
      \item a result guaranteeing almost sure convergence to the network
        optimum for the long-term averaging case;
      \item the description of an easily implementable algorithms that
        ensures the convergence of the algorithm to the optimal point,
        using limited uniform
        communication to the agents. The algorithm is particularly suited
        to dumb devices that do not have extensive computational capabilities.
    \end{enumerate}

    Mathematically speaking, it is interesting to see an almost sure
    convergence result, that does not make use of the existence of an
    invariant measure of the stochastic process. We do expect however,
    that when considering the invariant measures $\pi^T$ that are obtained
    for the case of finite time-windows, then as $T\to \infty$ the
    measures $\pi^T$ converge to the Dirac measure in $x^\ast$.
\end{remark}

\section{Related Works}
\label{sec:relworks}
Our work lies in the intersection of two subjects: resource allocation
and limiting characteristics of the stochastic version of the AIMD
algorithm \cite{Jac88}.\newline

The AIMD literature is huge and it is  not straightforward to discuss
the available results in any sort of compact manner here.  We refer
interested readers to some recent works on this topic
in the context of TCP and internet congestion control
\cite{LowPagDoy02,Masaki10,Kumar10,Com06,Mol09,Com06,XuHaRh04,Kel02,Flo03}. Much of this work is based on fluid
approximations of AIMD dynamics; the notable exceptions are
\cite{ShoWirLei06,RotSho08,shorten2007modelling}. The latter of these papers make use
of tools from iterated function systems to deduce the existence of
such a unique probability distribution for standard linear AIMD
networks (of which TCP is an example) under an assumption on the
underlying probability model (albeit under very restrictive
assumptions). To the best of our knowledge, this paper, along with the
companion paper \cite{ShoWirLei06}, established for the first time,
the stochastic convergence of AIMD networks. 
However, the window of infinite length considered in this paper goes well
beyond the set-up in these papers. In particular, the result presented in
this paper may be considered as the limiting case of the results presented
in these papers.\newline

The literature on resource allocation is also immense and a full
review is impossible here. Here, we briefly note that the subject of
resource allocation or social welfare optimization has been studied
in three prominent settings: centralized, distributed concerted, and
distributed competitive.\\

In the first setting, there is a single
decision-maker, who knows the utility function $f_i$ of every agent,
solves the network optimization problem,
and assign the optimal allocation to each agent.\\

In the second setting, every resource user is a decision-maker that
determines its own allocation according to a \emph{fixed} policy (e.g.,
AIMD algorithm in the case of TCP), that is prescribed by a system
operator and remains unchanged over time.
When all agents follow fixed policies prescribed by the system operator, a number of distributed
optimization algorithms have been proposed to iteratively converge to
an optimal allocation of resources for numerous settings
\cite{DAW11,NO09,JRJ09,RNV11}.  Some of these algorithms are based on
achieving consensus \cite{NO09}, others are based on distributed
averaging \cite{ZWSL10,MMMSW09}, and on stochastic approximation
\cite{Bianchi12}. 
All these algorithms rely on communication between agents to achieve
optimality.
For example, when agents are
assigned to nodes in a graph and restricted to communicate only with
neighbors in that graph, the distributed dual averaging algorithm has
been shown to guarantee the convergence of each agent's allocation to
the optimal allocation over iterations of the algorithm \cite{DAW11}.
Our work also considers a distributed setting with fixed-policy
agents, but contains an important difference: the agents do not communicate among
themselves, but are limited to an intermittent feedback signal from the
network. Specifically, they only observe at each iteration whether the
allocation is feasible (i.e., the capacity constraint is satisfied).
Another difference of our work is that our results do not depend on
the existing convergence results from stochastic approximation, and
hence hold under different conditions. In particular, we cannot apply
the standard convergence argument for stochastic approximation because
there are two time-scales (cf. \cite[Chapters~6.2 and
10.4]{Borkar08}). \newline

In a third setting, every resource user is a decision-maker that acts strategically so as to optimize its utility function
with regards to the actions of all agents. When all agents act
strategically, solution concepts such as Nash equilibria are more
meaningful than optimality concepts. In such a setting, a market
mechanism based on bids has been proposed \cite{JohTsi04} and shown to
be efficient in equilibrium under some assumptions
\cite{YuMan06}. In this situation the equilibrium
allocation is also the solution to an optimization problem.  These
works do not however provide a method for the agents to arrive at an
equilibrium allocation. In contrast, our work may not consider
strategic agents, but does present a set of policies that guarantee
the convergence to an optimal allocation.\newline

The link between congestion control (which encompasses the AIMD algorithm) and
optimization has been noted by several authors \cite{low2002understanding,kunniyur2003end,Sri04,Cauld,IJC_Paper,tassiulas2001ratecontrol,Cri2013}. That various embodiments of TCP solve a network utility
maximization problem is a cornerstone of much of the TCP literature
\cite{Sri04}.
However, we are dealing with the converse problem: given an NUM problem,
is there an AIMD algorithm that solves it? Perhaps, the most closely
related works in this direction are given in the following references:
\cite{kunniyur2003end,Cauld,IJC_Paper,tassiulas2001ratecontrol,Cri2013}. Roughly
speaking, these references follow two lines of direction. In the first
direction, fluid-like approximations of congestion control are modified to
address the NUM problem. This yields a sub-gradient like 
algorithm for solving the NUM problem. In the second direction,
synchronized AIMD like algorithms are proposed to solve certain NUM problems
using nonlinear back-off rules and nonlinear increase rules \cite{martin2014,IJC_Paper,tassiulas2001ratecontrol,Cri2013}.
The work presented here goes far beyond these works. First,  we consider the
matrix model of TCP proposed in \cite{ShoWirLei06} as opposed to a fluid
model. 
Fluid approximations are valid only for very large numbers of agents, and the dynamic 
interaction between agents is often overlooked. The matrix model is an exact representation of AIMD dynamics 
under certain assumptions and can readily be implemented in existing software stacks. Furthermore, these models are often analyzed 
using linearized approximations in contrast to our approach in which global stability 
(ergodicity) is proved.  A further difference is that each agent responds
to a capacity event according to its own probability function, known only
to that agent. In this sense our proposed algorithms go beyond traditional AIMD and emulate RED-like congestion control \cite{Sri04}. Second, we assume very limited actuation; an agent only decides to 
respond to a capacity event or not in an asynchronous manner. There is no need for a common clock
and the setting is completely stochastic.  In this context our results prove
convergence and stability of the stochastic AIMD system and establish its
suitability for solving large scale NUM problems.

\section{Example}
\label{sec:example}

We now illustrate the application of our results. To this end consider a total of $n=150$ agents   participating in the optimization. Each agent $i$ has a cost function $f_i$ assigned which maps its share of the resource capacity $C$ to an associated cost. 
The cost functions are chosen from the set of polynomials taking the following forms
\begin{align*}
    g_1(\bar{x}) &= a_1 \bar{x}^2 \\
    g_2(\bar{x}) &=  a_2\bar{x}^{2}  + b_2 \bar{x}^3 \\
    g_3(\bar{x}) &= a_3 \bar{x}^{2}  + b_3 \bar{x}^3  + c_3 x^4\\
   g_4(\bar{x}) &= a_4 \bar{x}^{2}  + b_5 \bar{x}^4  + c_4 x^6
\end{align*}
The parameters $a_j$,$b_j$, and $c_j$ are the cost-factors of each function and are positive. Note that each function is convex and strictly increasing on the interval $[0,C]$. The objective is then 
\begin{equation}
  \label{eq:optimisation-generic}
  \begin{aligned}
& \min_{\bar{x}_{1},\ldots, \bar{x}_{n}} \sum_{i=1}^n f_{i}(\bar{x}_{i}) \\
& \text{s.t.}\quad  \sum_{i=1}^n \bar{x}_i = C.
\end{aligned}
\end{equation}
For the simulations we choose the resource capacity  to be equal to one, i.e. $C=1$. Further, the cost-function type for each agent is selected randomly according to a uniform distribution. 
The cost-factors parameters for each agent are also selected randomly using a uniform distribution between $0$ and $100$. 
Defining
\begin{equation}
\lambda_i(r) = \Gamma \frac{f_i'(r)}{r} \,,
\end{equation}
each agent responds to a capacity event with probability
\begin{eqnarray}\label{eq:prob-set-generic}
\lambda_i(\bar{x}_i(k))
\end{eqnarray}
with $\bar{x}_i(k)$ as in \eqref{eq:average} in case of long-term averaging and 
\begin{eqnarray}\label{eq:prob-set-generic-finite averaging}
\lambda_i(\bar{x}_{i,T}(k))
\end{eqnarray}
with $\bar{x}_{i,T}(k)$ as in \eqref{eq:finitewindow} in case of finite averaging.
 Here $\Gamma$ is a network wide constant chosen to ensure that
 $0<\lambda_i(r) < 1$ for $0 \le r \le 1$. 
 In our simulations we set $\Gamma = \frac{1}{1300}$. The remaining AIMD parameters are identical for all agents with $\alpha= 0.01$ and $\beta = 0.85$.  From our main result we know that for large $k$ we have that $\bar{x}_i(k) \approx x_i^*$. Thus we can write $\lambda_i(\bar{x}_i(k)) \approx \lambda_i^*$. It follows that 
\begin{eqnarray}
\lambda_i(\bar{x}_i(k)) \approx \Gamma \frac{f_i'(\bar{x}_i(k))}{\Theta}\lambda_i(\bar{x}_i(k))
\end{eqnarray}
and that $f_i'(\bar{x}_i(k)) \approx f_j'(\bar{x}_j(k))$ for all $i,j$ and large $k$, and where $\Theta$ is a network constant. These are precisely the KKT conditions. \protect\footnote{Note that we have made several assumptions. First, we have assumed that eventually $\bar{x}_i(k) \approx x_i^*$. This follows from our main result. We have also assumed that 
$p_i(k) \approx p_i^*$. This follows from continuity of the $f_i(\cdot)$.} 
We first simulate the long-term averaging case, where the average at time instant $k$ is taken over all previous time-steps. Figure~\ref{fig:generic1} shows the typical evolution of the derivative of the cost of seven randomly selected agents. It illustrates that the derivatives approach consensus as $k$ increases;
this results in the     the above stated optimization problem being solved asymptotically.\newline

Figure~\ref{fig:states-generic1} shows the long-term average state
$\bar{x}_i(k)$ of seven randomly chosen agents in comparison to their respective
optimal state $x^*_i$  depicted by a dashed
line.
Figure~\ref{fig:error-generic1} shows the absolute error between the
long-term average and the optimal state for the same seven agents. With
increasing time the long-term average approaches the optimal state for
those seven randomly selected agents. In
Figure~\ref{fig:max-error-generic1} the maximal error between the
long-term average and the optimal state is plotted, which 
approaches zero with increasing time.

\begin{figure}[tp]
  \centering
   \includegraphics[scale=0.9]{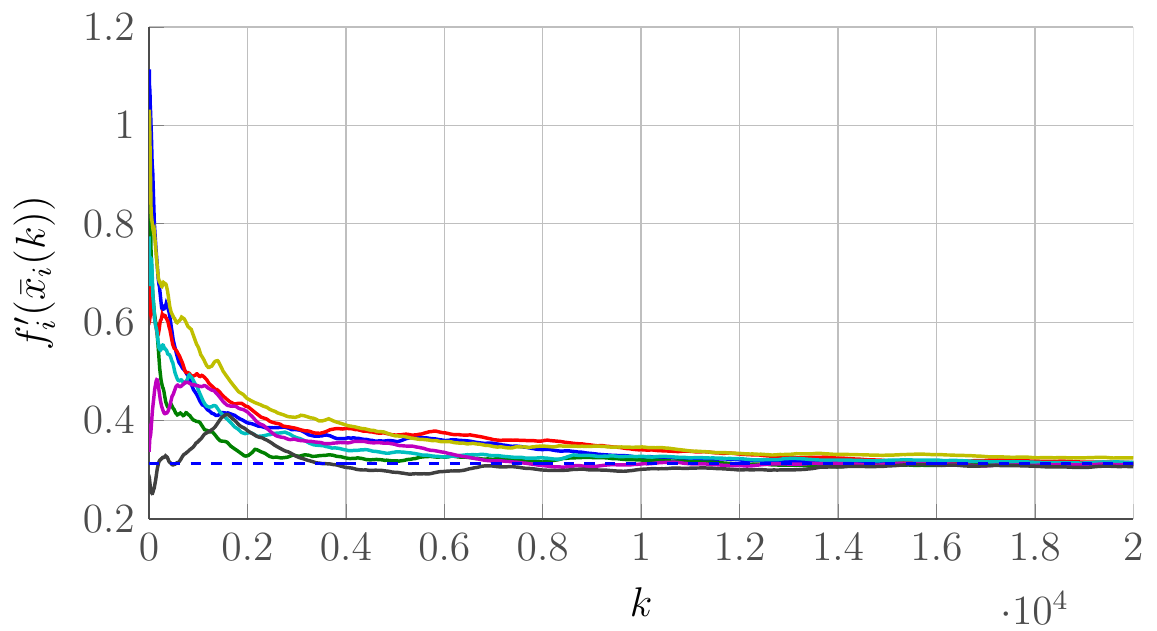}
  \caption{Evolution of the cost derivatives for seven randomly selected agents. 
}
\label{fig:generic1}
\end{figure}

\begin{figure}[tp]
    \centering
    \subfigure[$\bar{x}_{i}(k)$ (solid) and $x_{i}^{*}$ (dashed)]{\includegraphics{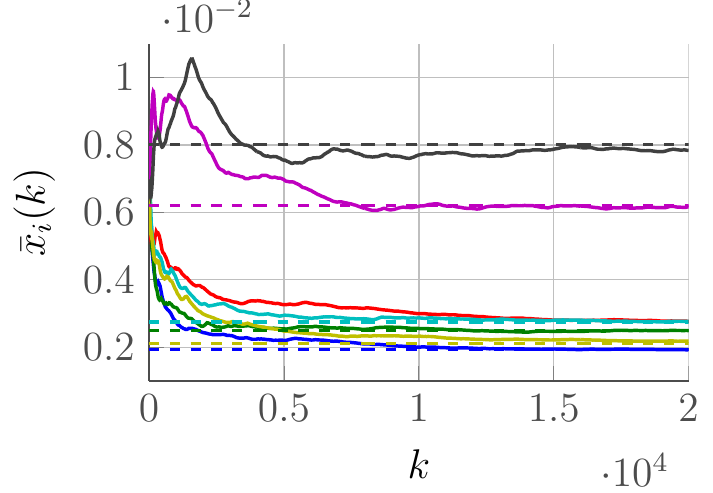}\label{fig:states-generic1}}
    \subfigure[$\left| \bar{x}_{i}(k) - x_{i}^{*} \right|$]{\includegraphics{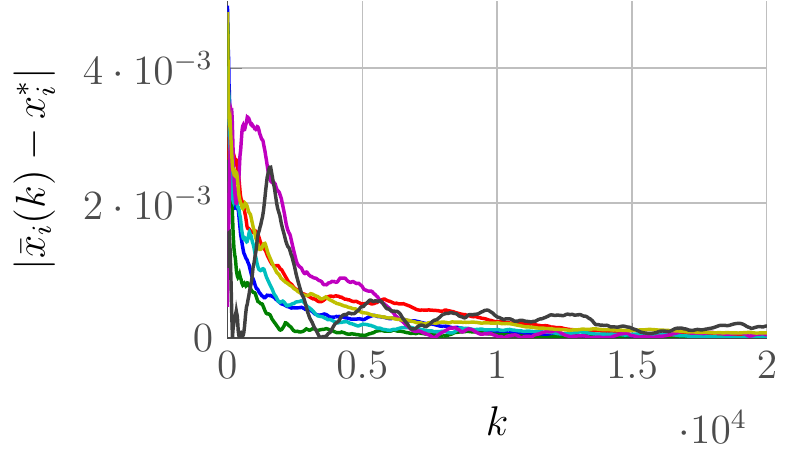}    \label{fig:error-generic1}}
    \caption{Evolution of the states in comparison to the optimal point for seven randomly selected agents.}
    \label{fig:state-generic1}
\end{figure}

\begin{figure}[tp]
  \centering
   \includegraphics[scale=0.9]{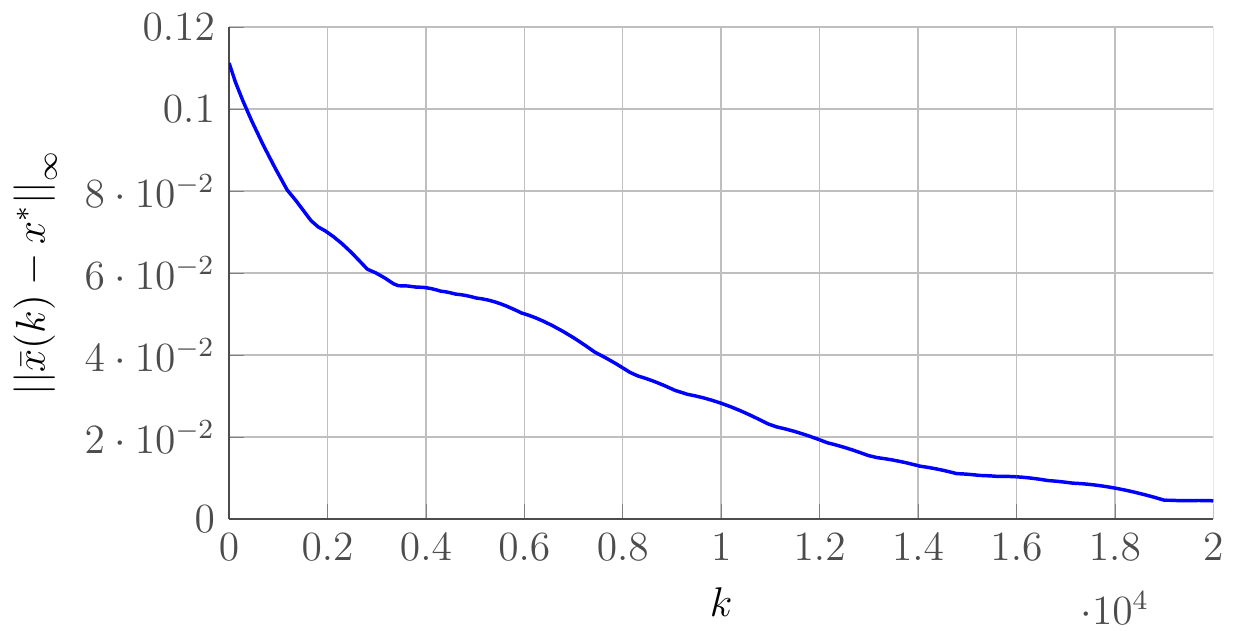}
  \caption{Evolution of the maximal absolute error between the states and the optimal states, i.e. $\left| \left| \bar{x}(k) - x^{*} \right| \right|_{\infty}$. }
    \label{fig:max-error-generic1}
\end{figure}
~
\newline

We repeat our experiment for the finite averaging case,
\eqref{eq:MPfinitetime}, with a fixed window size $T=
500$. Figure~\ref{fig:generic1-500} shows the typical evolution of the
  derivative of the cost  function for seven randomly selected agents. It illustrates
that these derivatives are    oscillating around the optimal value.
\newline

Figure~\ref{fig:max-error-generic1-500} shows the maximal error between
the long-term average and the optimal state. Recall, that in these
simulations the long term average is not used for determining the drop
probabilities. Its limit exists almost surely and is given by the
expectation of the underlying invariant measure, see
Theorem~\ref{t:invmeasure}. 
While the cost computed with the finite
average is oscillating, the long term average of the state is still
converging towards the optimal value.

\begin{figure}[tp]
  \centering
   \includegraphics[scale=0.9]{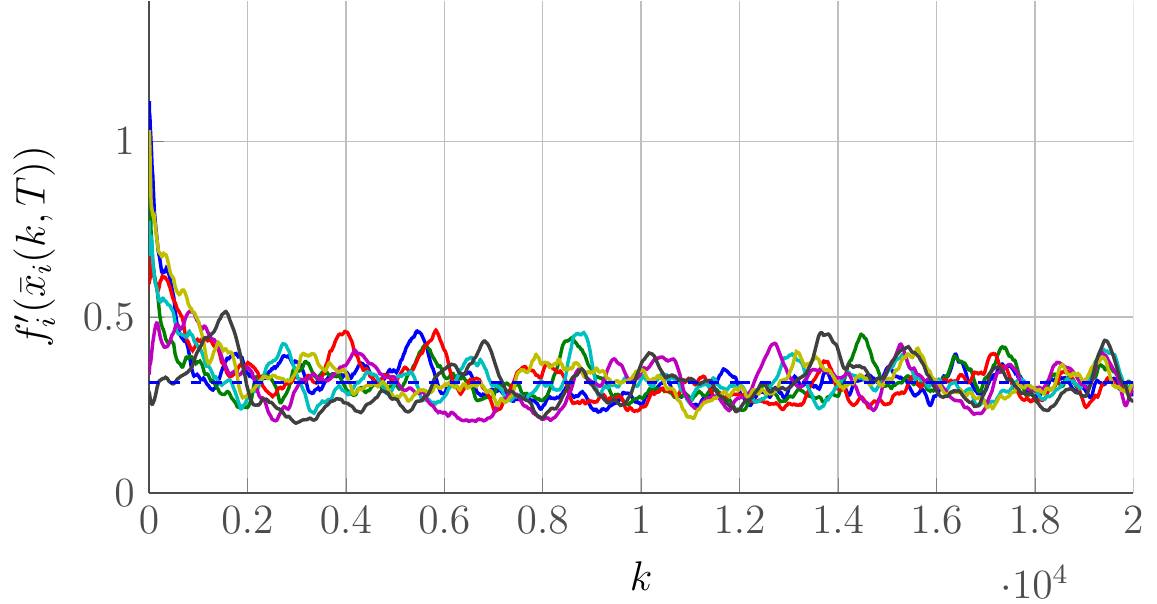}
  \caption{Evolution of the derivatives for seven randomly selected agents. The simulations are done for the finite averaging case with a fixed window size $T=500$.}
  \label{fig:generic1-500}
\end{figure}

\begin{figure}[tp]
  \centering
   \includegraphics[scale=0.9]{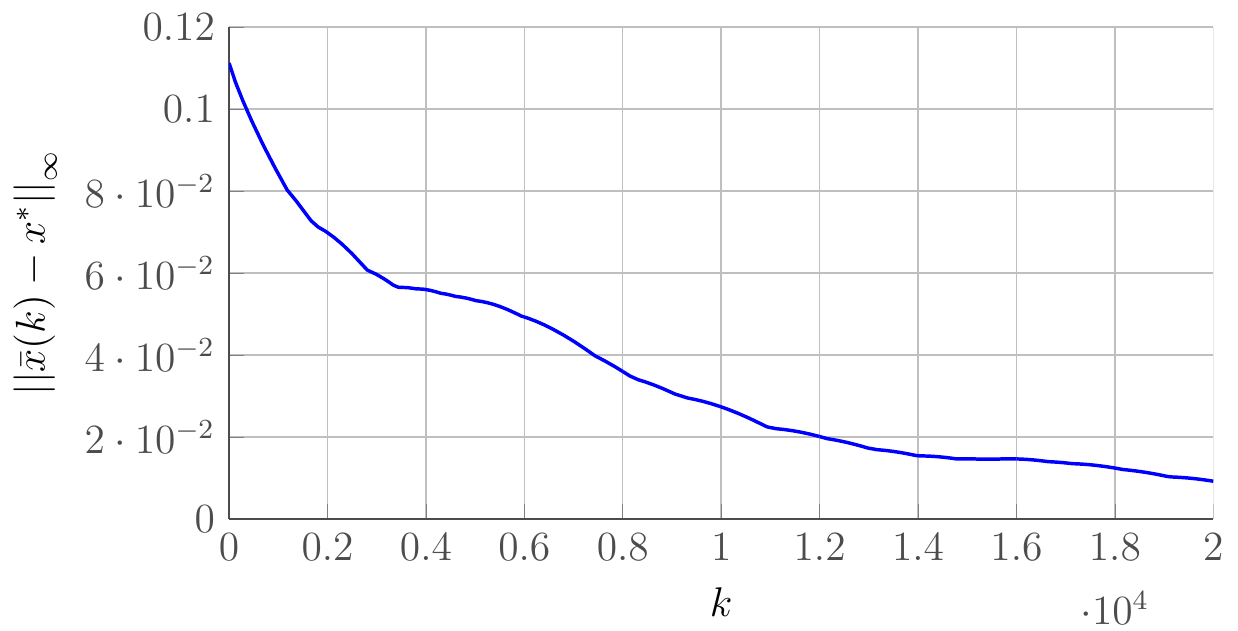}
  \caption{Evolution of the maximal absolute error between the states and the optimal, i.e. $\left| \left| \bar{x}(k) - x^{*} \right| \right|_{\infty}$. The simulations are done for the finite averaging case with a fixed window size $T=500$.}
  \label{fig:max-error-generic1-500}
\end{figure}

\clearpage
\section{Conclusions}
In this paper we have derived a convergence result for the non-homogeneous
Markov chain that arises in the study of networks employing the {\em
  additive-increase multiplicative decrease} (AIMD) algorithm. We then
used this result to solve the network utility maximization problem
in a very simple manner. Future work will consider the behavior of finite
window averaging systems and elaborate on the preliminary results obtained in this paper.\\

We have also applied our approach to the distributed optimization of agents that have two possible states---ON and OFF---and finite memory. Initial, but
    extensive, simulation studies suggest that our results hold in this setting as well.
Finally, it is worthwhile to note that it is possible to extend our results in the following directions:
allowing updates to the increase and decrease parameters ($\alpha_i$ and $\beta_i$) over time to achieve faster convergence to optimality, allowing agents leaving and joining the allocation system over time,
and allowing the joint allocation of multiple resources; e.g., bandwidth over distinct links of a communication network, or a combination of bandwidth and computation resources in a cloud server.

\appendix
\section*{Appendix}
\section{Preamble to the Proofs}

We now briefly explain the structure of the proof of the main result, as
otherwise the reader may sometimes wonder why we need certain
intermediate results.\\

The key intuition is that in the long run,  the long term
average $\bar{x}$ changes  slowly.
 In other words, for large $T$ and
relatively short intervals of length $m$ of the form $[T, T+m]$,
$\bar{x}$ is almost a constant, where $m$ is to be understood to be small
when compared to $T$. The reason for this is the simple relation
\begin{equation}
\label{eq:explaindyn}
   \bar{x}(T+m) = \frac{T+1}{T+1+m} \bar{x}(T) + \frac{m}{T+1+m}\left( \frac{1}{m}\sum_{\ell=1}^m x(T+\ell) \right)\,,
\end{equation}
which holds along any sample path.\\

If $\bar{x}$ is almost a constant on a certain interval, then the
probabilities for choosing the matrices $A\in {\cal A}$ are almost
constant, and we can approximate the dynamics using the results on AIMD
with constant probabilities; and consequently Lemma~\ref{lem:uniformfixed}
becomes relevant. This result says that, provided that $m$ is large
enough, the average over the next $m$ steps is close to the expectation of
the AIMD Markov chain with constant probabilities. And this holds for all
starting
conditions $x(T)$ and with high probability.\\

While this basic intuition turns out to be true, we need to resolve the
fact that the ergodic limit of the ``fixed-probability system'' depends on
$T$. Specifically, $m$ and $T$ depend on each other and the precise
resolution of our proof depends on understanding this relationship.\\

To resolve this, we use the following interpretation of
\eqref{eq:explaindyn}. 
For $y \in \Sigma$, we denote by $P(y)$ the expectation of the
invariant measure of the IID AIMD process with  fixed probabilities $\lambda_1(y_1) \dots, \lambda_n(y_n)$,
that is, 
\begin{equation}
P(y) = \xi_{\lambda(y)} \,,
\end{equation}
where
\[
\lambda(y) =
\left[ \begin{array}{ccc}
\lambda_1(y_1) &\cdots & \lambda_n(y_n)
\end{array}
\right] \,.
\] 
We then rewrite
\eqref{eq:explaindyn} as
\begin{equation}
\label{eq:explaindyn2}
   \bar{x}(T+m) = \frac{T+1}{T+1+m} \bar{x}(T) + \frac{m}{T+1+m}\left( 
P(\bar{x}(T)) + \Delta(T) \right)\,, 
\end{equation}
where we interpret $\Delta(T)$ as a suitable perturbation term, that
aggregates the effect that the probabilities are not precisely constant on
$[T+1,T+m]$, and the further effect that we are not at the expectation but
only close to it.\\

To understand the dynamics in \eqref{eq:explaindyn2} we study the system
\begin{equation}
\label{eq:explaindyn3}
   \bar{x}(T+m) = \frac{T+1}{T+1+m} \bar{x}(T) + \frac{m}{T+1+m}\,
P(\bar{x}(T)) \,, 
\end{equation}
and interpret system \eqref{eq:explaindyn2} as a perturbed version
thereof. This is the sole purpose of Appendix~\ref{sec:deterministic}, in
which we obtain (i) characterizations of the unique fixed point of
\eqref{eq:explaindyn3}, (ii) characterizations of attractivity properties
of neighborhoods of this fixed point in dependence of the size of
$m/(T+m)$, and (iii) the necessary robustness results to extend these
attractivity statements to the perturbed system~\eqref{eq:explaindyn2}.  In
Appendix~\ref{sec:mainproof} we then bring the stochastic nature of our
nonhomogeneous Markov chain into play and use the results of
Appendix~\ref{sec:deterministic} to prove almost sure convergence using
what is essentially a Lyapunov type argument.

\section{Deterministic Iteration}
\label{sec:deterministic}

In this section we present a collection of stability and robustness
results for a deterministic system closely related to the AIMD Markov
chain. These results will turn out to be instrumental in the proof of
the main result Theorem~\ref{t:convergence}.\\

The first results study a deterministic system defined by
successive convex combinations of a point in $\Sigma$ with the expectation
of this point as defined through \eqref{eq:expfixedprob}.\newline

 Recall, that we assume that $\alpha \in \ri \Sigma$ and 
$\beta \in (0,1)^n$ satisfy the assumption that the quotient
$\alpha_i/(1-\beta_i)$ is a constant independent of $i$. 
As a consequence
the limiting value defined in \eqref{eq:expfixedprob} simplify. Given the
probabilities $\lambda_1, \dots, \lambda_n$ the expression reduces to
 \begin{equation}
      \label{eq:expfixedprob2}  
\xi_\lambda =\frac{1}{\sum_{\ell=1}^n \lambda_\ell^{-1}}
    \begin{bmatrix}
        \lambda_1^{-1} \\ \vdots \\
\lambda_n^{-1}
    \end{bmatrix}\,.
    \end{equation}
We thus arrive at the map  $P : \Sigma \to \Sigma$ given by
\begin{equation}
    \label{eq:Ex}
P(x) = 
\Theta(x)
    \begin{bmatrix}
        \lambda_1(x_1)^{-1} \\ \vdots \\
\lambda_n(x_n)^{-1}
    \end{bmatrix}
    \qquad \text{with} \qquad
   \Theta(x) =  \frac{1}{\sum_{\ell =1}^n \lambda_\ell(x_\ell)^{-1}} \,.
\end{equation}

Note that $P(\Sigma) \subset \ri \Sigma$ is compact by (A3). We may therefore choose a
constant $\delta^{-{}}>0$ such that 
\begin{equation}
\label{eq:deltaminusdef}
    P(\Sigma) + \overline{B}_1(0,\delta^{-{}}) \subset \conv P(\Sigma) + \overline{B}_1(0,2\delta^{-{}}) \subset \ri \Sigma\,.
\end{equation}
Note that in this instance, and in the following, scalings and sums of
sets are in the standard sense of Minkowski sums. Also the factor $2$ is
an arbitrarily chosen factor that will become useful in later robustness
estimates. All that is required is that this factor exceeds $1$.
Furthermore, we require the constant
\begin{equation}
    \label{eq:deltaplus}
    \delta^+ := \max \{ \distone(y,P(\Sigma)) \;;\; y \in \Sigma \ \} \,.
\end{equation}

We will be interested in systems that perform successive convex
combinations of the state $x$ and $P(x)$.
For $\{ \varepsilon_k \}_{k\in\N} \subset (0,1)$ consider the system\footnote{This system is an instance of stochastic
  approximation, and convergence results of
  \cite[Chapter 2]{Borkar08} hold under appropriate assumptions. However, we shall require and derive stronger results.}
\begin{equation}
    \label{eq:convexcombsys}
    x(k+1) = (1-\varepsilon_k) x(k) + \varepsilon_k P(x(k))\,.
\end{equation}

We note the following simple properties of the iteration in
\eqref{eq:convexcombsys}.

\begin{lemma}
\label{lem:Pfix}
Suppose that Assumptions (A1)--(A3) hold and $n\geq2$.
Let $\alpha \in \ri \Sigma_n$ and 
$\beta \in (0,1)^n$ be  such that $\alpha_i/(1-\beta_i)$ is independent of
$i$. 
Then $P$ has the following properties.
    \begin{enumerate}[(i)]
      \item  $P$ has a fixed point, that is, there is  a vector $x^\ast \in \Sigma$ with
        $P(x^\ast) = x^\ast$.
      \item The fixed point $x^\ast$ is unique and  is characterized by the property
        \begin{equation}
            \label{eq:Cequality}
            x_i^\ast\lambda_i(x_i^\ast)=x_j^\ast\lambda_j(x_j^\ast) := \gamma_F
            \qquad \text{for all} \quad i,j \in \{ 1,\ldots,n \} \,.
        \end{equation}
      \item 
      For every $\varepsilon\in (0,1]$ the fixed point $x^\ast$ of $P$  
        is the unique fixed point of
        \begin{equation*}
            x \mapsto (1-\varepsilon) x + \varepsilon P(x)\,.
        \end{equation*}
      \item For every $x_0 \in \Sigma$ and every sequence $\{
        \varepsilon_k \}_{k\in\N} \subset (0,1)$ the solution of
        \eqref{eq:convexcombsys} satisfies $x(k) \gg 0$ for all $k\geq 1$.
    \end{enumerate}
\end{lemma}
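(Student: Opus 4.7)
The plan is to dispatch the four items in the order stated, using continuity and Brouwer for existence, strict monotonicity of $r\mapsto r\lambda_i(r)$ for uniqueness, and then one-line algebraic identities for (iii) and (iv). To begin with (i), I would observe that $P$ is continuous on the compact convex set $\Sigma$ by assumption (A1), and that $P(\Sigma)\subset \ri \Sigma$: every coordinate of $P(x)$ equals $\Theta(x)/\lambda_i(x_i)$, which is strictly positive by (A3), while the coordinates sum to $\Theta(x)\sum_\ell \lambda_\ell(x_\ell)^{-1}=1$ by definition of $\Theta$. Brouwer's fixed point theorem then yields an $x^\ast \in \Sigma$ with $P(x^\ast)=x^\ast$, and since $x^\ast\in P(\Sigma)$ we in fact have $x^\ast\gg 0$.

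For (ii), the characterization is immediate from $x_i^\ast=\Theta(x^\ast)/\lambda_i(x_i^\ast)$, which rearranges to $x_i^\ast\lambda_i(x_i^\ast)=\Theta(x^\ast)$, a quantity independent of the index $i$ and so equal to the common value $\gamma_F$. The uniqueness argument, which is the main substantive step of the lemma, rests critically on (A2): each map $g_i(r):=r\lambda_i(r)$ is continuous, strictly increasing on $[0,1]$, and vanishes at $0$, so it admits a continuous strictly increasing inverse $g_i^{-1}$ on its image. Any fixed point must then be of the form $x_i=g_i^{-1}(\gamma)$ for a common $\gamma$ subject to $\sum_i g_i^{-1}(\gamma)=1$; since $\gamma\mapsto\sum_i g_i^{-1}(\gamma)$ is strictly increasing, this scalar equation has at most one solution, which combined with the existence from (i) gives uniqueness. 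The $n\geq 2$ hypothesis is used implicitly only to rule out the degenerate case where the simplex is a singleton.

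Claims (iii) and (iv) are both short. For (iii), if $x=(1-\varepsilon)x+\varepsilon P(x)$ with $\varepsilon\in(0,1]$, then $\varepsilon(P(x)-x)=0$, forcing $x=P(x)$; conversely every fixed point of $P$ is trivially fixed by the convex combination, so the fixed point sets coincide and (ii) applies. For (iv), the same positivity used in (i) gives $P(x)\gg 0$ for every $x\in\Sigma$; since $\varepsilon_k\in(0,1)$, the vector $\varepsilon_k P(x(k))$ is coordinatewise strictly positive and $(1-\varepsilon_k)x(k)$ is nonnegative, so their sum $x(k+1)$ is strictly positive for every $k\geq 0$, and induction delivers $x(k)\gg 0$ for all $k\geq 1$. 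I do not foresee any real obstacle here: the only place where the hypotheses are used non-trivially is the uniqueness in (ii), and the strict monotonicity of (A2) essentially delivers that conclusion via the implicit inverses $g_i^{-1}$.
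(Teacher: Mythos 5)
Your proposal is correct, and items (i), (iii), (iv) coincide with the paper's proof (Brouwer on the compact convex $\Sigma$; the trivial rearrangement $\varepsilon(P(x)-x)=0$; positivity of $P$ via (A3)). The one place you genuinely diverge is the uniqueness step in (ii). The paper argues directly with two putative fixed points $x^\ast \neq y^\ast$: since both lie in $\Sigma$ there are indices $i,j$ with $x^\ast_i > y^\ast_i$ and $x^\ast_j < y^\ast_j$, and then (A2) forces $x^\ast_i\lambda_i(x^\ast_i) > y^\ast_i\lambda_i(y^\ast_i) = y^\ast_j\lambda_j(y^\ast_j) > x^\ast_j\lambda_j(x^\ast_j)$, contradicting \eqref{eq:Cequality}. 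You instead invert the maps $g_i(r)=r\lambda_i(r)$ and reduce uniqueness to the scalar equation $\sum_i g_i^{-1}(\gamma)=1$, whose left-hand side is strictly increasing in $\gamma$. Both arguments use exactly (A2); yours is slightly less elementary (it introduces the inverses $g_i^{-1}$) but buys more: it exhibits the fixed point as the solution of a one-dimensional monotone equation, which is constructive and makes the characterization in \eqref{eq:Cequality} transparent in both directions, whereas the paper's contradiction argument avoids inverses altogether. One small point you should tighten: $g_i^{-1}$ is defined only on the range $[0,g_i(1)]$, so the scalar map $\gamma\mapsto\sum_i g_i^{-1}(\gamma)$ lives on $\bigl[0,\min_i g_i(1)\bigr]$; this is harmless because any $x\in\Sigma$ satisfying \eqref{eq:Cequality} has coordinates at most $1$, so its common value $\gamma$ lies in that interval, but the restriction should be stated. (Continuity of $g_i^{-1}$, which you invoke, is not actually needed—strict monotonicity suffices.)
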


\begin{proof}
    (i)~As $P: \Sigma\to \Sigma$ is continuous  and $\Sigma$ is compact and convex,
    the existence of a fixed
    point for  $P$ follows from Brouwer's fixed point theorem. 
    
    (ii)~ 
    Letting
     $\gamma(x^*): = \sum_{\ell=1}^n 1/\lambda_\ell(x^*_\ell)$,
     it follows from the definition of $P$ that a fixed point  $x^*$ is characterized by  
    \[
   \frac{ \gamma(x^*)}{ \lambda_i(x^*_i)} = x^*_i
    \]
    for $i=1, \ldots, n$, that is,
    \[
    \lambda_i(x^*_i)x^*_i =\gamma(x^*)
    \]
    Suppose that  there are
    two fixed points $x^\ast \neq y^\ast$ for $P$.
    Since  $x^\ast, y^\ast \in \Sigma$, 
    there are indices $i$ and $j$ such that
    $x^\ast_i >
    y^\ast_i$ and $x^\ast_j < y^\ast_j$.  
Also,
    \begin{equation*}
       x^\ast_i \lambda_i(x^\ast_i)  =  x^\ast_j \lambda_j(x_j^\ast)
        \quad \text{and} \quad   y^\ast_i \lambda_i(y^\ast_i)   = y^\ast_j \lambda_j(y_j^\ast)\,.
    \end{equation*}
But from Assumption~(A2) 
we have $ x^\ast_i \lambda_i(x_i^\ast) > y^\ast_i \lambda_i(y_i^\ast)
= y^\ast_j \lambda_j(y_j^\ast) > x^\ast_j \lambda_j(x_j^\ast)$.
This contradiction completes the proof.

(iii)~This is an immediate consequence of (i).

(iv)~This follows as $P(x) \gg 0$ for all $x \in \Sigma$ by definition and
using Assumption~(A3). \hfill\qed
\end{proof}

In order to simplify notation, we introduce for $\varepsilon\in [0,1]$ the
map $R_\varepsilon:\Sigma\to \Sigma$ by
\begin{equation}
    \label{eq:Repsdef}
    R_\varepsilon(x) := (1-\varepsilon) x + \varepsilon P(x)\,.
\end{equation}
Lemma \ref{lem:Pfix} tells us that  for $\varepsilon\in (0,1]$ the fixed point $x^\ast$
of $P$ is also the unique fixed point of $R_\varepsilon$.\\

In our analysis of the dynamics we require two types of contractive
properties of the map $R_\varepsilon$ in combination with robustness
results. We will also consider set-valued maps of the form
\begin{equation}
    \label{eq:Repsdefincl}
    \Psi_\varepsilon^\delta (x) := R_\varepsilon(x) + \varepsilon \overline{B}_1(0,\delta) = (1-\varepsilon) x + \varepsilon (P(x) + \overline{B}_1(0,\delta))\,,
\end{equation}
where we assume $0< \delta < \delta^{-{}}$. Note that by definition of
$\delta^{-{}}$ this ensures that $ \Psi_\varepsilon^\delta (x) \subset\ri \Sigma $.
In the following lemma, we analyze properties of the map
$\Psi_\varepsilon^\delta$ by studying individual elements in its image.

    The next result describes two important features of the iteration
    \begin{equation}
        \label{eq:incl}
        x(k+1) \in \Psi_\varepsilon^\delta(x(k))\,.
    \end{equation}
    On one hand by (i) the iteration converges with rate $(1-\varepsilon)$
    to the convex set
\[
P_{\mathrm{co}}( \delta):= \conv
      (P(\Sigma) + \overline{B}_1(0, \delta))
      \]
      On the
    other hand using (ii) if the iteration is perturbed so that all we
    know that there is a convex combination with some $y\in \Sigma$ then
    we may bound the increase of the distance to the convex set.  Finally,
    by (iii) the error induced by the perturbation $y$ can be linearly
    bounded in $\varepsilon$, provided that we are sufficiently far way
    from $P_{\mathrm{co}}(\delta)$. For the following
    statement recall the definition of $\delta^-$ in \eqref{eq:deltaminusdef}.

\begin{lemma}
    \label{lem:convinv}
    Let $x\in \Sigma$. Then for all $0<\varepsilon\leq 1$:
    \begin{enumerate}[(i)]
      \item For all $0<\delta< \delta^{-{}}$ and $\Delta\in \R^n,
        e^\top\Delta=0, \|\Delta\|_1 \leq \delta$ we have
\begin{equation}
        \label{eq:convinv}
        \distone(R_\varepsilon(x) + \varepsilon \Delta, \, P_{\mathrm{co}}(\delta)) \leq (1-\varepsilon) 
        \distone(x, P_{\mathrm{co}}(\delta))\,.
    \end{equation}

  \item In view of \eqref{eq:deltaplus}, for all $0<\delta< \delta^{-{}}$
    and all $y\in \Sigma$, we have
\begin{equation}
\label{eq:deltaplusbound}
   \distone((1-\varepsilon)x + \varepsilon y, 
   P_{\mathrm{co}}(\delta)) \leq  (1-\varepsilon) \distone(x, P_{\mathrm{co}}(\delta)) + \varepsilon \delta^{+}\,.
\end{equation}
\item For every $0< \bar{\delta} < \delta^{-}$ there exists a
$C_{\bar \delta}>0$ such that for all $0< \varepsilon < 1$ and all $0<
\delta < \delta^-$ we have the following implication: If $x\in \Sigma$
satisfies $\distone(x, P_{\mathrm{co}}(\delta)) >
\bar{\delta}$ and $y\in \Sigma$, 
then
\begin{equation}
    \label{eq:contr1}
    \distone((1-\varepsilon)x + \varepsilon y,  P_{\mathrm{co}}(\delta) ) \leq  (1+ C_{\bar \delta} \varepsilon) \distone(x, P_{\mathrm{co}}(\delta))\,.
\end{equation}
    \end{enumerate}
\end{lemma}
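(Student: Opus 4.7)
My plan is to prove the three parts in order, using only convexity arguments, the defining properties of $P_{\mathrm{co}}(\delta)$, and the constants $\delta^-$, $\delta^+$ introduced in \eqref{eq:deltaminusdef}, \eqref{eq:deltaplus}.

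For part (i) the key observation is the rewrite
\[
R_\varepsilon(x) + \varepsilon \Delta \;=\; (1-\varepsilon)x + \varepsilon\bigl(P(x) + \Delta\bigr).
\]
Because $e^\top \Delta = 0$ and $P(x)\in\Sigma$, the vector $P(x)+\Delta$ still lies in the affine hull of $\Sigma$, and because $\|\Delta\|_1 \le \delta < \delta^{-}$ the choice of $\delta^-$ in \eqref{eq:deltaminusdef} guarantees $P(x)+\Delta \in P(\Sigma)+\overline{B}_1(0,\delta) \cap \Sigma \subset P_{\mathrm{co}}(\delta)$. For any $z \in P_{\mathrm{co}}(\delta)$, convexity of $P_{\mathrm{co}}(\delta)$ gives $(1-\varepsilon)z+\varepsilon(P(x)+\Delta) \in P_{\mathrm{co}}(\delta)$, and thus
\[
\distone\bigl(R_\varepsilon(x)+\varepsilon\Delta,\, P_{\mathrm{co}}(\delta)\bigr) \;\le\; \bigl\|(1-\varepsilon)(x-z)\bigr\|_1 \;=\; (1-\varepsilon)\|x-z\|_1.
\]
Taking the infimum over $z\in P_{\mathrm{co}}(\delta)$ yields \eqref{eq:convinv}.

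For part (ii) I apply the same convex-combination trick but no longer have control on where $y$ sits. By definition \eqref{eq:deltaplus} of $\delta^+$, $\distone(y,P(\Sigma))\le \delta^+$, and since $P(\Sigma) \subset P_{\mathrm{co}}(\delta)$ we get $\distone(y,P_{\mathrm{co}}(\delta))\le \delta^+$. Picking $z,w \in P_{\mathrm{co}}(\delta)$ that almost realize the distances from $x$ and $y$ respectively, convexity gives $(1-\varepsilon)z+\varepsilon w \in P_{\mathrm{co}}(\delta)$, and the triangle inequality for $\|\cdot\|_1$ splits the distance of the convex combination into the claimed bound.

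Part (iii) is then a direct algebraic consequence of (ii). Writing $d:=\distone(x, P_{\mathrm{co}}(\delta))$, the bound from (ii) reads $(1-\varepsilon)d+\varepsilon\delta^+ = d + \varepsilon(\delta^+ - d)$. If $d\ge \delta^+$ the right-hand side is already $\le d \le (1+C_{\bar\delta}\varepsilon)d$ for any $C_{\bar\delta}\ge 0$. Otherwise, to absorb $\varepsilon(\delta^+-d)$ into $C_{\bar\delta}\varepsilon d$ we need $C_{\bar\delta} \ge \delta^+/d - 1$, and the hypothesis $d > \bar\delta$ makes the supremum of $\delta^+/d$ finite; choosing for example $C_{\bar\delta} := \delta^+/\bar\delta$ does the job uniformly in $\varepsilon\in(0,1)$ and $\delta\in(0,\delta^-)$.

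No step here is genuinely hard; the only place one has to be slightly careful is in (i), where both $e^\top\Delta = 0$ and $\delta < \delta^-$ are needed to keep $P(x)+\Delta$ inside $\Sigma$ and hence inside $P_{\mathrm{co}}(\delta)$. This is precisely why \eqref{eq:deltaminusdef} was set up with the factor $2$ on the right-hand side: a single $\delta^-$ serves both as the admissible perturbation size in (i) and as the slack used later in robustness arguments.
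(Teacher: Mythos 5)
Your proposal is correct and follows essentially the same route as the paper's proof: the same decomposition $R_\varepsilon(x)+\varepsilon\Delta=(1-\varepsilon)x+\varepsilon(P(x)+\Delta)$ with a nearest point $z$ and convexity of $P_{\mathrm{co}}(\delta)$ for (i), the splitting $\distone((1-\varepsilon)x+\varepsilon y, C)\le(1-\varepsilon)\distone(x,C)+\varepsilon\distone(y,C)$ together with the definition of $\delta^+$ for (ii), and the bound $\delta^+\le(\delta^+/\bar\delta)\,\distone(x,P_{\mathrm{co}}(\delta))$ for (iii). Your explicit choice $C_{\bar\delta}=\delta^+/\bar\delta$ and the affine-hull justification for why $P(x)+\Delta$ stays in $P_{\mathrm{co}}(\delta)$ merely make explicit details the paper leaves implicit.
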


\begin{proof}
  (i)~  Let $z \in 
  P_{\mathrm{co}}(\delta)$ be such that 
        \begin{equation*}
            \| x-z \|_1 =  \distone(x, P_{\mathrm{co}}(\delta))\,.
        \end{equation*}
Then by convexity $(1-\varepsilon) z + \varepsilon (P(x) +\Delta) \in P_{\mathrm{co}}(\delta)$ and so
        \begin{equation*}
\distone(R_\varepsilon(x) + \varepsilon \Delta, 
P_{\mathrm{co}}(\delta)) \leq
\|  ( R_\varepsilon(x) + \varepsilon \Delta) - ((1-\varepsilon) z + \varepsilon (P(x) +\Delta)) \|_1
= (1-\varepsilon) \| x-z \|_1\,.
        \end{equation*}
(ii)~To prove \eqref{eq:deltaplusbound} note that for any convex set $C$, 
we have $C= (1-\varepsilon)C + \varepsilon C$. 
Hence,
\[
    \distone((1-\varepsilon)x + \varepsilon y, 
    P_{\mathrm{co}}(\delta) ) \leq (1-\varepsilon) 
    \distone(x, P_{\mathrm{co}}(\delta) ) + \varepsilon \ 
     \distone(y, P_{\mathrm{co}}(\delta) )\,,
\]
which shows the claim by definition of $\delta^+$.\\

(iii)~To prove \eqref{eq:contr1} note that with the assumption
$\distone(x, P_{\mathrm{co}}(\delta)) >
 \bar{\delta}$ we arrive at
\begin{equation*}
    \delta^{+} \leq \delta^{+}\frac{\distone(x, P_{\mathrm{co}}(\delta) )}{\bar{\delta}}, 
\end{equation*}
and so \eqref{eq:contr1} follows from \eqref{eq:deltaplusbound} with an
appropriate choice of $C_{\bar{\delta}}>0$.  This completes the
proof. \hfill\qed
\end{proof}

It is the aim of the following sequence of results to establish similar
properties close to the fixed point $x^\ast$. To this end we have found it
necessary to work with a different metric.  

We need the following lemma, for which we will make use of
the following elementary observations. First, note the implication
    \begin{equation}
        \label{eq:obs1}
        \bigl( x \in \ri \Sigma \quad\text{ and }\quad x\neq x^\ast \bigr)
        \quad \Rightarrow \quad 
\min_j \left\{x_j / x_j^\ast \right \} < 1 <\max_i \left\{x_i / x_i^\ast \right \}\,.
    \end{equation}
    Using this relation it is straightforward to see that for any sequence
    $\{ x_k \} \subset \ri \Sigma$ we have the equivalence
\begin{equation}
    \label{eq:obs2}
    e^{d_H}(x_k,x^\ast) \to 1 \quad \Leftrightarrow \quad \min_j
    \{ {x_{kj}}/{x_j^\ast} \}  \to 1 \quad \Leftrightarrow \quad
\max_i \{ 
    {x_{ki}}/{x_i^\ast} \}  \to 1 \,.
\end{equation}

\begin{lemma}
    \label{lem:Hilbertfar}
    Let $x^\ast \in \Sigma$ be the unique fixed point of $P$, as described
    in Lemma~\ref{lem:Pfix}. For every $\eta > 0$, there are constants
    $0<r<1<R$ and a constant 
    $\varepsilon_0 \in (0,1)$ such that for all $0 < \varepsilon <
    \varepsilon_0$ we have that if $d_H(x,x^\ast)> \eta$ and $i,j$ are such that
 \begin{equation}
\label{eq:REHilbert}
     e^{d_H}(R_\varepsilon(x),x^\ast)=   \frac{ R_\varepsilon(x)_i / x_i^\ast }{ R_\varepsilon(x)_j/ x_j^\ast} \,,
    \end{equation}
then $x_i > R x_i^\ast$ and $x_j < r x_j^\ast$. 
\end{lemma}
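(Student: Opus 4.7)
The plan is to pass to the multiplicative coordinates $\rho_k := x_k/x^\ast_k$ and $\sigma_k := P(x)_k/x^\ast_k$ so that
\[
\frac{R_\varepsilon(x)_k}{x^\ast_k} = (1-\varepsilon)\rho_k + \varepsilon\sigma_k,
\]
and then to argue that the argmax index $i$ and the argmin index $j$ of the left-hand side differ from the argmax and argmin of $\rho$ by $O(\varepsilon)$.  Since $P(\Sigma)\subset \ri\Sigma$ is compact (by (A3)) and $x^\ast \gg 0$, I can fix uniform constants $0<m\leq M$ with $\sigma_k \in [m,M]$ for every $x\in\Sigma$ and every $k$, and set $x^\ast_{\min}:=\min_k x^\ast_k > 0$.

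The first and crucial step is to show, using $d_H(x,x^\ast)>\eta$, that $\rho_{\max} \geq 1+c_+$ and $\rho_{\min}\leq 1-c_-$ for constants $c_\pm(\eta)>0$ depending only on $\eta$ and $x^\ast_{\min}$.  The argument exploits the mass-conservation identity $\sum_k x^\ast_k(\rho_k-1)=0$, which follows from $x,x^\ast\in\Sigma$.  Splitting that sum into its positive and negative parts and using $x^\ast_k\geq x^\ast_{\min}$ in the extremal summand on each side gives the two-sided comparison
\[
\rho_{\max}-1 \ \geq\ x^\ast_{\min}(1-\rho_{\min}), \qquad 1-\rho_{\min}\ \geq\ x^\ast_{\min}(\rho_{\max}-1).
\]
The hypothesis $d_H(x,x^\ast)>\eta$ translates to $\rho_{\max} > e^\eta \rho_{\min}$, which rearranges as
\[
(\rho_{\max}-1) + e^\eta(1-\rho_{\min}) > e^\eta-1.
\]
Substituting either of the two comparisons then forces each of $\rho_{\max}-1$ and $1-\rho_{\min}$ to exceed an explicit positive constant in $\eta$ and $x^\ast_{\min}$, e.g.\ $c_+ = (e^\eta-1)x^\ast_{\min}/(x^\ast_{\min}+e^\eta)$ and $c_- = (e^\eta-1)x^\ast_{\min}/(1+e^\eta x^\ast_{\min})$.

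The second step is a short extremality comparison.  If $i$ realises the maximum in \eqref{eq:REHilbert} and $k^\ast$ is any $\rho$-argmax, then $(1-\varepsilon)\rho_i+\varepsilon\sigma_i \geq (1-\varepsilon)\rho_{k^\ast}+\varepsilon\sigma_{k^\ast}$ yields
\[
\rho_i \ \geq\ \rho_{\max} - \frac{\varepsilon}{1-\varepsilon}(M-m),
\]
and symmetrically $\rho_j \leq \rho_{\min} + \frac{\varepsilon}{1-\varepsilon}(M-m)$.  I will then choose $R:=1+c_+/2$, $r:=1-c_-/2$, and $\varepsilon_0\in(0,1)$ small enough that $\frac{\varepsilon_0}{1-\varepsilon_0}(M-m) < \min(c_+/2,c_-/2)$, so that combining the two steps delivers $\rho_i > R$ and $\rho_j < r$, as required.

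The main obstacle is the first step: one has to exclude the scenario in which $x$ is Hilbert-far from $x^\ast$ but $\rho_{\max}$ is only marginally above $1$ (with a single very small $\rho_k$ producing all the Hilbert distance).  This scenario is not ruled out by the Hilbert metric by itself, and is precisely where the simplex constraint enters: the $x^\ast$-weighted balance between excess and deficit coordinates forces the argmax and argmin of $\rho$ to deviate from $1$ simultaneously, with a quantitative lower bound controlled by $x^\ast_{\min}$.
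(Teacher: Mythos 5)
Your proposal is correct, and it reaches the conclusion by a genuinely different route on the key step. Where the paper argues softly, via the sequential equivalence \eqref{eq:obs2} and an implicit compactness argument, that $d_H(x,x^\ast)\geq\eta$ forces the existence of non-explicit constants $r_1<1<R_1$ with $\min_j x_j/x_j^\ast\leq r_1$ and $\max_i x_i/x_i^\ast\geq R_1$, you derive the same two-sided separation quantitatively from the simplex mass-balance identity $\sum_k x^\ast_k(\rho_k-1)=0$: your pair of weighted comparisons $\rho_{\max}-1\geq x^\ast_{\min}(1-\rho_{\min})$ and $1-\rho_{\min}\geq x^\ast_{\min}(\rho_{\max}-1)$ is exactly the content of \eqref{eq:obs1}--\eqref{eq:obs2} made explicit, and your constants $c_\pm(\eta)$ check out (solving $a+e^\eta b>e^\eta-1$ against $a\geq x^\ast_{\min}b$ and $b\geq x^\ast_{\min}a$ does give the stated expressions). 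The second step also differs in packaging: the paper introduces an intermediate threshold $r_2$ and two sandwich inequalities \eqref{eq:2}--\eqref{eq:4} built from the uniform bounds $c_1\leq P_i(x)\leq c_2$ of \eqref{eq:Pibound}, concluding by contradiction that the argmin of the iterated ratio cannot come from an index with $x_j/x_j^\ast\geq r$; you instead compare the perturbed argmax/argmin directly with the unperturbed extremes, obtaining $\rho_i\geq\rho_{\max}-\frac{\varepsilon}{1-\varepsilon}(M-m)$ and its mirror, which is logically equivalent but avoids the auxiliary threshold. (Note your $m,M$ are available uniformly for the same reason as the paper's $c_1,c_2$, namely (A3) and $x^\ast\gg 0$.) What your version buys is fully explicit constants $r=1-c_-/2$, $R=1+c_+/2$, $\varepsilon_0$ in terms of $\eta$, $x^\ast_{\min}$, $x^\ast_{\max}$, $\lambda_{\min}$ and $n$ — potentially useful for tracking how $C_\eta$ in Corollary~\ref{c:Pconvestimate} degrades as $\eta\to 0$ — and an argument that handles boundary points $x$ (where $d_H(x,x^\ast)=\infty$ and $\rho_{\min}=0$) within the same computation rather than as a separate trivial case; what the paper's version buys is brevity, since \eqref{eq:obs1}--\eqref{eq:obs2} are already on record. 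Your closing choice of $\varepsilon_0$ with $\frac{\varepsilon_0}{1-\varepsilon_0}(M-m)<\min(c_+/2,c_-/2)$ correctly delivers $x_i>Rx_i^\ast$ and $x_j<rx_j^\ast$, and your identification of $i,j$ in \eqref{eq:REHilbert} as argmax/argmin of $R_\varepsilon(x)_k/x_k^\ast$ is justified since any non-extremal pair yields a strictly smaller ratio.
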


\begin{proof}
    Let $\eta>0$ be fixed. 
    By \eqref{eq:obs2}
    there exists a constant $r_1 \in (0,1)$ such that for all $x\in \ri
    \Sigma$ with $d_H(x,x^\ast)\geq \eta$ we have
    \begin{equation}
        \label{eq:minetabound}
        \min_{j=1,\ldots,n} \ \frac{x_j}{x_j^\ast} \leq r_1 < 1\,.
    \end{equation}
    Using (A3) and $\lambda_i(x_i) \in [\lambda_{\min{}},1]$, we
    have for all $x\in \Sigma$, $i=1,\ldots,n$ that
    \begin{equation}
        \label{eq:Pibound}
       0< c_1 := \frac{\lambda_{\min}}{n}   \leq P_i(x) = \frac{1}{\sum_{\ell=1}^n \lambda_\ell(x_\ell)^{-1}}\ \frac{1}{\lambda_i(x_i)} \leq \min\left\{1,\frac{1}{n}\frac{1}{\lambda_{\min}} \right\} =: c_2\,.
    \end{equation}
    Define $x^\ast_{\min}:= \min_{i=1,\ldots,n} x_i^\ast >0 , x^\ast_{\max}:=
    \max_{i=1,\ldots,n} x_i^\ast>0$.
Choose $r,r_2$ such that $0 < r_1 < r_2 < r <1$ and $\varepsilon_1>0$ such
that we have for all $\varepsilon \in (0,\varepsilon_1)$ that 
\begin{equation}
    \label{eq:2}
    (1-\varepsilon) r_1 + \varepsilon \frac{c_2}{x^\ast_{\min}} < r_2 <
    (1-\varepsilon) r + \varepsilon \frac{c_1}{x^\ast_{\max}} \,.
\end{equation}
With this choice it follows that if $x_j/x^\ast_j \leq r_1$ and $0< \varepsilon <
\varepsilon_1$ then
\begin{equation}
    \label{eq:3}
    \frac{R_\varepsilon(x)_j}{x_j^\ast} = \frac{(1-\varepsilon)x_j + \varepsilon P_j(x)}{x_j^\ast}
        \leq (1-\varepsilon) r_1 + \varepsilon \frac{c_2}{x^\ast_{\min}} <r_2 \,.
\end{equation}
On the other hand, if $x_j/x^\ast_j \geq r$ and $0< \varepsilon <
\varepsilon_1$ then 
\begin{equation}
    \label{eq:4}
   \frac{R_\varepsilon(x)_j}{x_j^\ast} = \frac{(1-\varepsilon)x_j + \varepsilon P_j(x)}{x_j^\ast}
        \geq (1-\varepsilon) r + \varepsilon \frac{c_1}{x^\ast_{\max}} >r_2 \,. 
\end{equation}
Combining \eqref{eq:3} and \eqref{eq:4} we see that if $d_H(x,x^\ast)\geq
\eta$ and $r, \varepsilon_1$ are chosen as above then \eqref{eq:REHilbert} implies
that $x_j < r x_j^\ast$, as desired.



The claim for the upper bound $x_i \geq R x_i^\ast$ follows with a similar
argument. To this end note that by \eqref{eq:obs2}, there exists a constant $R_1>1$ such
    that for all $x\in \ri \Sigma$ with $d_H(x,x^\ast)\geq \eta$ we have
    \begin{equation}
        \label{eq:maxetabound}
        \max_{i=1,\ldots,n} \ \frac{x_i}{x_i^\ast} \geq R_1 > 1\,.
    \end{equation}
The claim then follows by another application of \eqref{eq:Pibound}.
%
\end{proof} 

The following result is a
cornerstone in our proof of the main result.

\begin{theorem}
\label{lem:basicconv}
    Let $x^\ast \in \Sigma$ be the unique fixed point of $P$, as described
    in Lemma~\ref{lem:Pfix}. For every $\eta > 0$, there is
    $1>\varepsilon_0>0$ such that for all $0 < \varepsilon < \varepsilon_0$
    we have
    \begin{equation}
        \label{eq:basiccontr}
        d_H(x,x^\ast) > \eta \quad \Rightarrow \quad  
d_H(R_\varepsilon(x),x^\ast) < d_H(x,x^\ast) \,.
    \end{equation}
\end{theorem}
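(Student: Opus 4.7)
The plan is to work with the multiplicative form $e^{d_H}$ of the Hilbert metric, exploiting the fixed-point characterization from Lemma~\ref{lem:Pfix}(ii) together with the monotonicity hypothesis (A2). Set $v_k := x_k/x_k^\ast$ and $w_k := P_k(x)/x_k^\ast$, so that the components of $R_\varepsilon(x)/x^\ast$ are $u_k = (1-\varepsilon)v_k + \varepsilon w_k$. Then $e^{d_H}(x,x^\ast) = \max_k v_k/\min_k v_k$ and $e^{d_H}(R_\varepsilon(x),x^\ast) = \max_k u_k/\min_k u_k$.

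Let $\varepsilon_0$ be the threshold supplied by Lemma~\ref{lem:Hilbertfar} for the given $\eta$, and fix any $\varepsilon\in(0,\varepsilon_0)$ and $x$ with $d_H(x,x^\ast)>\eta$. Choose $i^\ast, j^\ast$ attaining the maximum and minimum of $u$. Lemma~\ref{lem:Hilbertfar} then yields constants $r<1<R$ with $v_{i^\ast} > R$ and $v_{j^\ast} < r$. Since $v_{i^\ast}/v_{j^\ast} \le \max_k v_k/\min_k v_k$, it suffices to prove the strict inequality $u_{i^\ast}/u_{j^\ast} < v_{i^\ast}/v_{j^\ast}$.

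Cross-multiplying and canceling the common term $(1-\varepsilon)v_{i^\ast}v_{j^\ast}$ reduces this to $v_{j^\ast} w_{i^\ast} < v_{i^\ast} w_{j^\ast}$, i.e., $P_{i^\ast}(x)/x_{i^\ast} < P_{j^\ast}(x)/x_{j^\ast}$. Substituting the explicit form $P_k(x)/x_k = \Theta(x)/(x_k \lambda_k(x_k))$ from \eqref{eq:Ex} and using $\Theta(x)>0$, this is in turn equivalent to
\begin{equation*}
x_{i^\ast}\lambda_{i^\ast}(x_{i^\ast}) \;>\; x_{j^\ast}\lambda_{j^\ast}(x_{j^\ast}).
\end{equation*}
Now Lemma~\ref{lem:Pfix}(ii) gives $x_k^\ast\lambda_k(x_k^\ast) = \gamma_F$ for all $k$, and (A2) states that each map $r\mapsto r\lambda_k(r)$ is strictly increasing. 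From $v_{i^\ast} > R > 1$ we have $x_{i^\ast} > x_{i^\ast}^\ast$, so $x_{i^\ast}\lambda_{i^\ast}(x_{i^\ast}) > \gamma_F$; symmetrically $x_{j^\ast} < x_{j^\ast}^\ast$ yields $x_{j^\ast}\lambda_{j^\ast}(x_{j^\ast}) < \gamma_F$. The strict inequality follows, completing the argument.

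The principal obstacle is not this final algebraic manipulation, which is clean once Lemma~\ref{lem:Hilbertfar} is available, but rather the requirement that the argmax and argmin indices of $u$ land on coordinates where $v$ is genuinely separated from $1$ on the correct side. Without this, one could have $v_{i^\ast}$ or $v_{j^\ast}$ close to $1$, where the sign of $x_k\lambda_k(x_k) - \gamma_F$ is not determined and the argument collapses; Lemma~\ref{lem:Hilbertfar} is engineered precisely to preclude this pathology for $\varepsilon$ small enough.
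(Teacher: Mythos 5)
Your proposal is correct and takes essentially the same route as the paper's proof: Lemma~\ref{lem:Hilbertfar} places the extremal indices of $R_\varepsilon(x)/x^\ast$ strictly above and below $x^\ast$, and then Assumption~(A2) together with the fixed-point identity $x_k^\ast\lambda_k(x_k^\ast)=\gamma_F$ from Lemma~\ref{lem:Pfix}\,(ii) forces the strict decrease of $e^{d_H}$ --- your cross-multiplied inequality $v_{j^\ast}w_{i^\ast}<v_{i^\ast}w_{j^\ast}$ is precisely the content of the paper's estimates \eqref{eq:estimate1}--\eqref{eq:estimate3}, merely reorganized. The only (cosmetic) difference is that the paper first dispatches the boundary case where $x$ has a zero entry, so that $d_H(x,x^\ast)=\infty$ while $R_\varepsilon(x)\gg 0$ makes the claim trivial; in your write-up the division by $v_{j^\ast}$ would need this one-line remark.
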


\begin{proof}
    If $x \in \Sigma$ and some entries of $x$ are zero, then
    $d_H(x,x^\ast) = \infty$, and also $R_\varepsilon(x) \gg 0$ by
    construction. Thus the claim follows trivially.
    In the remainder of the proof we will thus assume that $x \gg 0$. 

Fix $\eta> 0$. By Lemma \ref{lem:Hilbertfar} we may choose a constant
$\varepsilon_1\in (0,1)$ such that if $d_H(x,x^\ast) > \eta$ and if $i,j$ are such that  
\begin{equation}
\label{eq:REHilbert_bis}
     e^{d_H}(R_\varepsilon(x),x^\ast)=   \frac{ R_\varepsilon(x)_i / x_i^\ast }{ R_\varepsilon(x)_j/ x_j^\ast} \,,
    \end{equation}
then $x_i > x_i^\ast$ and $x_j < x_j^\ast$.

    For the case $x_i > x_i^\ast$, we obtain using the constant $\gamma_F$
    defined in \eqref{eq:Cequality} and the fact that $x_i \lambda_i(x_i)
    >\gamma_F$ by assumption (A2) that
    \begin{equation}
\label{eq:estimate1}
        \begin{aligned}
        R_\varepsilon(x)_i &= 
  (1-\varepsilon)x_i +  \frac{\varepsilon}{\sum_{\nu=1}^n \lambda_\nu(x_\nu)^{-1}}\frac{1}{\lambda_i(x_i)}\\ 
&= \left( (1-\varepsilon) + \frac{\varepsilon}{\sum_{\nu=1}^n 
\lambda_\nu(x_\nu)^{-1}} \frac{1}{x_i \lambda_i(x_i)} \right) x_i
<  \left( (1-\varepsilon) + \frac{\varepsilon}{\sum_{\nu=1}^n 
\lambda_\nu(x_\nu)^{-1}} \frac{1}{\gamma_F} \right) x_i       \,.     
        \end{aligned}
    \end{equation}
    By a similar argument, if $x_j < x_j^\ast$ we obtain 
\begin{equation}
    \label{eq:estimate2}
        R_\varepsilon(x)_j = (1-\varepsilon)x_j + \frac{\varepsilon}{\sum_{\nu=1}^n \lambda_\nu(x_\nu)^{-1}} \frac{1}{\lambda_j(x_j)}
>  \left( (1-\varepsilon) + \frac{\varepsilon}{\sum_{\nu=1}^n \lambda_\nu(x_\nu)^{-1}} \frac{1}{\gamma_F} \right) x_j      \,.  
\end{equation}
Combining \eqref{eq:estimate1} and \eqref{eq:estimate2}, we obtain for the
indices $i,j$ such that \eqref{eq:REHilbert_bis} holds that
    \begin{equation}
        \label{eq:estimate3}
        e^{d_H}(R_\varepsilon(x),x^\ast)= \frac{R_\varepsilon(x)_i / x_i^\ast}{R_\varepsilon(x)_j / x_j^\ast}
        < \frac{ x_i / x_i^\ast}{x_j / x_j^\ast} \leq e^{d_H}(x,x^\ast)\,.
    \end{equation}
This completes the proof.
\end{proof}

\begin{corollary}
    \label{c:Pconvestimate}
     Let $x^\ast \in \Sigma$ be the unique fixed point of $P$, as
     described in Lemma~\ref{lem:Pfix}. For every $\eta > 0$, there is
     $1>\varepsilon_0>0$ and a constant $C_\eta>0$ such that for all $0 <
     \varepsilon < \varepsilon_0$ we have
    \begin{equation}
        \label{eq:basiccontr2}
        d_H(x,x^\ast) \geq \eta \quad \Rightarrow \quad  
e^{d_H}(R_\varepsilon(x),x^\ast) < (1 - C_\eta \varepsilon) e^{d_H}(x,x^\ast) \,,
    \end{equation}
or equivalently,
\begin{equation}
    \label{eq:basicconstr3}
         d_H(x,x^\ast) \geq \eta \quad \Rightarrow \quad  
d_H(R_\varepsilon(x),x^\ast) < d_H(x,x^\ast) + \log(1-C_\eta \varepsilon)
< d_H(x,x^\ast) - C_\eta \varepsilon\,.   
\end{equation}
\end{corollary}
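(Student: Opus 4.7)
The plan is to refine the strict inequality proved in Theorem \ref{lem:basicconv} into a quantitative linear contraction. Fix $\eta>0$. I would first invoke Lemma \ref{lem:Hilbertfar} (applied with $\eta/2$ so as to cover the closed region $d_H(x,x^{\ast})\geq\eta$) to obtain uniform constants $R>1$, $r\in(0,1)$ and $\varepsilon_1\in(0,1)$ such that for every $x$ in this region and every pair of indices $i,j$ realising the ratio $e^{d_H}(R_\varepsilon(x),x^{\ast})$, the original coordinates satisfy $x_i>R x_i^{\ast}$ and $x_j<r x_j^{\ast}$. Assumption (A2) together with continuity then supplies the key separation gap
\begin{equation*}
\gamma^+ := \min_i R x_i^{\ast}\lambda_i(R x_i^{\ast}) > \gamma_F, \qquad \gamma^- := \max_j r x_j^{\ast}\lambda_j(r x_j^{\ast}) < \gamma_F,
\end{equation*}
both strict and uniform in $x$, simply because the minima/maxima are over finitely many $i$ and the reference values $x_i^{\ast}$ are fixed.

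Second, I would substitute these bounds into the same factored identity already used in the proof of Theorem \ref{lem:basicconv}, namely
\begin{equation*}
R_\varepsilon(x)_k = x_k\Bigl[(1-\varepsilon)+\varepsilon\,\tfrac{\Theta(x)}{x_k\lambda_k(x_k)}\Bigr].
\end{equation*}
Monotonicity of $r\mapsto r\lambda_k(r)$ gives $R_\varepsilon(x)_i\leq x_i[(1-\varepsilon)+\varepsilon a]$ with $a:=\Theta(x)/\gamma^+$, and $R_\varepsilon(x)_j\geq x_j[(1-\varepsilon)+\varepsilon b]$ with $b:=\Theta(x)/\gamma^-$. Using the uniform lower bound $\Theta(x)\geq \Theta_{\min}:=\lambda_{\min}/n$ from \eqref{eq:Pibound}, one obtains the uniform strict gap $b-a\geq \Theta_{\min}(\gamma^+-\gamma^-)/(\gamma^+\gamma^-)=:\Delta>0$. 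Dividing the two bounds,
\begin{equation*}
e^{d_H}(R_\varepsilon(x),x^{\ast})=\frac{R_\varepsilon(x)_i/x_i^{\ast}}{R_\varepsilon(x)_j/x_j^{\ast}} \leq \frac{1-\varepsilon(1-a)}{1-\varepsilon(1-b)}\cdot\frac{x_i/x_i^{\ast}}{x_j/x_j^{\ast}} \leq h(\varepsilon)\,e^{d_H}(x,x^{\ast}),
\end{equation*}
with $h(\varepsilon):=[1-\varepsilon(1-a)]/[1-\varepsilon(1-b)]$ satisfying $h(0)=1$ and $h'(0)=-(b-a)\leq -\Delta$.

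The final step is a scalar Taylor estimate: since $\Theta(x)$ is trapped in a compact subinterval of $(0,\infty)$, the coefficients $a$ and $b$ lie in a compact set and the second derivative of $h$ is bounded independently of $x$, so there is $\varepsilon_0\in(0,\varepsilon_1]$ with $h(\varepsilon)\leq 1-(\Delta/2)\varepsilon$ for all $\varepsilon\in(0,\varepsilon_0]$. Setting $C_\eta:=\Delta/2$ gives \eqref{eq:basiccontr2}, and taking logarithms and using $\log(1-t)\leq -t$ yields the equivalent form \eqref{eq:basicconstr3}. The boundary case $x\in\Sigma\setminus\mathrm{ri}\,\Sigma$ is trivial: then $e^{d_H}(x,x^{\ast})=\infty$ whereas $R_\varepsilon(x)\gg 0$, so the estimate holds vacuously. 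The only subtle point is uniformity of the Taylor remainder over the region $d_H(x,x^{\ast})\geq\eta$; this is the obstacle that the compactness arguments above are designed to overcome, and once it is settled the calculation is just the first-order refinement of the strict inequality already established in Theorem \ref{lem:basicconv}.
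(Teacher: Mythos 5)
Your proposal is correct and takes essentially the same route as the paper's proof: Lemma~\ref{lem:Hilbertfar} to force the extremal indices into the regions $x_i > R x_i^\ast$, $x_j < r x_j^\ast$, then Assumption~(A2) to extract uniform separation constants $\gamma^- < \gamma_F < \gamma^+$ (the paper's $L_1$, $L_2$), followed by the factored ratio estimate --- the paper merely replaces your Taylor-plus-uniform-remainder step with an explicit algebraic identity that exhibits $C_\eta$ directly as $\min_{\varepsilon\in[0,1]}\bigl(\tfrac{1}{L_1}-\tfrac{1}{L_2}\bigr)/\bigl(\sum_\nu \lambda_\nu(x_\nu)^{-1}(1-\varepsilon)+\tfrac{\varepsilon}{L_1}\bigr)$. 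One cosmetic caveat: $\lambda_i(Rx_i^\ast)$ may be undefined when $Rx_i^\ast>1$, so your $\gamma^+$ should be defined as a minimum over the indices with $Rx_i^\ast\le 1$ (harmless, since the constraint set $\{x_i\in[0,1]: x_i\ge Rx_i^\ast\}$ is empty for the other indices and only the index attaining the maximum ratio matters).
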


\begin{proof}
    Fix $\eta>0$. Let $\varepsilon_0>0$ and $r,R$ be the constants
    corresponding to $\eta$ given by Lemma \ref{lem:Hilbertfar}. Using Assumptions~(A1), (A2) and
    Lemma~\ref{lem:Pfix}, there are constant $L_1 < \gamma_F < L_2$ such that 
    \begin{align*}
        x_j \lambda_j(x_j) \leq L_1 &\quad \text{ for all } j=1,\ldots,n, \text{ and all }  x_j\in [0,1] \text{ such that }  \frac{x_j}{x_j^\ast} \leq r, \\
        x_i \lambda_i(x_i) \geq L_2 &\quad \text{ for all } i=1,\ldots,n,
        \text{ and all } x_i\in [0,1] \text{ such that } \frac{x_i}{x_i^\ast} \geq R \,.
    \end{align*}
    With this notation we can refine the inequalities \eqref{eq:estimate1} and
    \eqref{eq:estimate2}. Namely, if $d_H(x,x^\ast)\geq \eta$ 
    and if $i,j$ are indices such that 
    \begin{equation}
        \label{eq:minREcond}
        e^{d_H}(R_\varepsilon(x), x^\ast) = \frac{R_\varepsilon(x)_i/x_i^\ast}{R_\varepsilon(x)_j/x_j^\ast},
    \end{equation}
    then using Lemma \ref{lem:Hilbertfar} we have $x_i/x_i^\ast \geq R$, $x_j/x_j^\ast\leq r$. We obtain
    following the steps of \eqref{eq:estimate1} and \eqref{eq:estimate2}
    \begin{align*}
        e^{d_H}(R_\varepsilon(x), x^\ast) =
        \frac{R_\varepsilon(x)_i/x_i^\ast}{R_\varepsilon(x)_j/x_j^\ast} &<
        \frac{ (1-\varepsilon) + \frac{\varepsilon}{\sum_{\nu=1}^n
            \lambda_\nu(x_\nu)^{-1}} \frac{1}{L_2} }{ (1-\varepsilon) +
          \frac{\varepsilon}{\sum_{\nu=1}^n \lambda_\nu(x_\nu)^{-1}}
          \frac{1}{L_1} }
        \ e^{d_H}(x,x^\ast)\\
        & = \left(1 - \varepsilon\ \frac{ \frac{1}{L_1} - \frac{1}{L_2}}{
            \sum_{\nu=1}^n \lambda_\nu(x_\nu)^{-1}(1-\varepsilon) +
            \frac{\varepsilon}{L_1} }\right) \ e^{d_H}(x,x^\ast) \,.\\
\intertext{The term on  the right hand side may be bounded by}
       &<  (1- C_\eta \varepsilon) \ e^{d_H}(x,x^\ast)\,,
\end{align*}
where
\begin{equation*}
    C_\eta = \min_{\varepsilon \in [0,1]} \ \left \{ \frac{ \frac{1}{L_1} - \frac{1}{L_2}}{
            \sum_{\nu=1}^n \lambda_\nu(x_\nu)^{-1}(1-\varepsilon) +
            \frac{\varepsilon}{L_1} } \right \} > 0\,.
\end{equation*}
Note that $C_\eta$ depends on $\eta$ as the choice of $r,R$ is a function
of $\eta$ and these constants in turn determine possible values for $L_1,L_2$.
The final claim follows from a simple application of the logarithm and by
using a standard inequality.  \phantom{a}\hfill\qed
\end{proof}

We also need the following two robustness results. The first concerns the
perturbed averaged system \eqref{eq:incl}, while the second yields a bound
on the worst case behavior of convex combination with arbitrary points in
$\Sigma$.

\begin{lemma}
\label{lem:pertbound}
Let $x^\ast \in \Sigma$ be the unique fixed point of $P$, as described
    in Lemma~\ref{lem:Pfix}.
Consider $\delta^->0$ as defined in \eqref{eq:deltaminusdef}.  
There
exists a constant $K>0$ such that for all $0< \delta < \delta^-$, $\varepsilon \in (0,1)$, all $x\in
P_{\mathrm{co}}(\delta)$, and all $\Delta \in \R^n$
with $e^\top \Delta=0$ and $\|\Delta\|_1 \leq \delta$ we have
    \begin{equation}
        e^{d_H}(R_\varepsilon(x) + \varepsilon\Delta ,x^\ast) - e^{d_H}(R_\varepsilon(x),x^\ast) 
        \leq K \varepsilon \delta\,.
    \end{equation}
\end{lemma}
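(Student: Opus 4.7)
The plan is to exploit the fact that everything takes place in a compact subset of $\ri \Sigma$ which is bounded away from the boundary uniformly in $\delta < \delta^-$ and $\varepsilon \in (0,1)$, so that $e^{d_H}(\cdot, x^\ast)$ is Lipschitz with respect to $\|\cdot\|_1$ on this set, with a Lipschitz constant independent of $\delta$ and $\varepsilon$.

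First I would set $y := R_\varepsilon(x)$ and $y' := y + \varepsilon \Delta$, and verify that both lie in a controlled subset of $\ri \Sigma$. Since $x \in P_{\mathrm{co}}(\delta)$ and $P(x) \in P(\Sigma) \subset P_{\mathrm{co}}(\delta)$, convexity gives $y \in P_{\mathrm{co}}(\delta) \subset \conv P(\Sigma) + \overline{B}_1(0, \delta^-)$. Moreover $\|\varepsilon \Delta\|_1 \leq \varepsilon \delta \leq \delta$, so $y' \in P_{\mathrm{co}}(\delta) + \overline{B}_1(0,\delta) \subset \conv P(\Sigma) + \overline{B}_1(0, 2\delta^-) \subset \ri \Sigma$ by \eqref{eq:deltaminusdef}. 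In particular there exists a constant $c_0 > 0$ (depending only on $\delta^-$, $P$, and $x^\ast$, not on $\delta$ or $\varepsilon$) such that every coordinate of $y$ and of $y'$ is bounded below by $c_0$. Since $y, y' \in \Sigma$, each coordinate is also bounded above by $1$, and of course $x_{\min}^\ast := \min_i x_i^\ast > 0$, $x_{\max}^\ast := \max_i x_i^\ast > 0$ are fixed.

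Next, I would write
\[
M := \max_i y_i/x_i^\ast, \quad m := \min_j y_j/x_j^\ast, \quad M' := \max_i y'_i/x_i^\ast, \quad m' := \min_j y'_j/x_j^\ast,
\]
so that $e^{d_H}(y,x^\ast) = M/m$ and $e^{d_H}(y', x^\ast) = M'/m'$. Since $y'_i = y_i + \varepsilon \Delta_i$ with $|\Delta_i| \leq \|\Delta\|_1 \leq \delta$, one has $|M' - M| \leq \varepsilon \delta/x_{\min}^\ast$ and $|m' - m| \leq \varepsilon \delta/x_{\min}^\ast$. Using the uniform lower bounds $m, m' \geq c_0/x_{\max}^\ast$ and the upper bound $M \leq 1/x_{\min}^\ast$, I compute
\[
\frac{M'}{m'} - \frac{M}{m} = \frac{(M'-M)m + M(m - m')}{m m'} \leq \frac{\varepsilon \delta}{x_{\min}^\ast}\cdot\frac{M + m}{m m'} \leq K \varepsilon \delta,
\]
with
\[
K := \frac{1}{x_{\min}^\ast}\cdot\frac{(1/x_{\min}^\ast) + (1/x_{\min}^\ast)}{(c_0/x_{\max}^\ast)^2}
   = \frac{2\,(x_{\max}^\ast)^2}{c_0^2\,(x_{\min}^\ast)^3},
\]
which depends only on $x^\ast$ and $\delta^-$ as required.

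The only subtle point is establishing the uniform positive lower bound $c_0$ on the coordinates of $y$ and $y'$. This follows once one notes that $\conv P(\Sigma) + \overline{B}_1(0, 2\delta^-)$ is a compact subset of $\ri \Sigma$, hence has positive distance to $\partial \Sigma$; this distance gives the lower bound simultaneously for all $\delta \in (0, \delta^-)$ and all $\varepsilon \in (0,1)$. The rest is the elementary ratio estimate above, and no step requires the structure of $P$ beyond $P(\Sigma) \subset \ri \Sigma$ and the definition of $\delta^-$.
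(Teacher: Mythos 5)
Your proof is correct and takes essentially the same route as the paper: a direct perturbation estimate of the difference of the two ratios defining $e^{d_H}(\cdot,x^\ast)$, with the factor of $\varepsilon\delta$ bounded uniformly in $\delta<\delta^-$ and $\varepsilon\in(0,1)$ because $\conv P(\Sigma) + \overline{B}_1(0,2\delta^-)$ is a compact subset of $\ri \Sigma$, so all relevant coordinates are bounded below by a constant independent of $\delta$ and $\varepsilon$. The only cosmetic difference is that the paper lower-bounds the denominator term $R_\varepsilon(x)_j - \varepsilon\delta$ via a mass-shifting argument (adding $-\varepsilon\delta e_j + \varepsilon\delta e_{j'}$ and invoking \eqref{eq:deltaminusdef}), whereas you bound the coordinates of $R_\varepsilon(x)+\varepsilon\Delta$ directly through the same containment; both arguments rest on the same mechanism and produce a constant $K$ of the same nature.
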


\begin{proof}
    The assumption on $\delta$ yields that $\conv P(\Sigma) +
    \overline{B}_1(0,2\delta) \subset \ri \Sigma$, see
    \eqref{eq:deltaminusdef}.  By definition we have
    \begin{align*}
        e^{d_H}(R_\varepsilon(x) + \varepsilon \Delta ,x^\ast) -
        e^{d_H}(R_\varepsilon(x),x^\ast) = \frac{ \max_i
          \left\{R_\varepsilon(x)_i + \varepsilon\Delta_i/ x_i^\ast \right
          \}}{\min_j \left\{ R_\varepsilon(x)_j+ \varepsilon\Delta_j/
            x_j^\ast \right \}} -
        \frac{ \max_i \left\{R_\varepsilon(x)_i / x_i^\ast \right \}}{\min_j \left\{ R_\varepsilon(x)_j/ x_j^\ast \right \}} \\
        \intertext{assuming the $i,j$ are chosen so that the maximum, resp. minimum is attained for the perturbed term, we may continue}\\
        \leq \frac{ (R_\varepsilon(x)_i + \varepsilon \delta)
          R_\varepsilon(x)_j / x_i^\ast x_j^\ast - R_\varepsilon(x)_i
          (R_\varepsilon(x)_j - \varepsilon \delta )/ x_i^\ast x_j^\ast}
        {(R_\varepsilon(x)_j - \varepsilon \delta) R_\varepsilon(x)_j/
          x_j^\ast x_j^\ast } = \varepsilon\delta
        \frac{(R_\varepsilon(x)_j + R_\varepsilon(x)_i)/ x_i^\ast
        }{(R_\varepsilon(x)_j - \varepsilon \delta) R_\varepsilon(x)_j/
          x_j^\ast}\,.
    \end{align*}
    To complete the proof, we need to show that the factor of $\varepsilon
    \delta$ in the expression on the right can be uniformly bounded for
    all $x\in P_{\mathrm{co}}(\delta)$. By assumption,
    $P_{\mathrm{co}}(\delta)$ is a compact subset of
    $\ri \Sigma$, so that all entries of $x$ and $R_\varepsilon(x)$ are
    bounded away from $0$. Furthermore, the terms $R_\varepsilon(x)_j -
    \varepsilon \delta$ are bounded away from $0$, because for arbitrary
    indices $j'\neq j$ we have $R_\varepsilon(x) - \varepsilon \delta e_j
    + \varepsilon \delta e_{j'} \in \conv P(\Sigma) +
    \overline{B}_1(0,2\delta) \subset \ri \Sigma$. Thus the factor of
    $\varepsilon\delta$ in the final expression may be bounded by a
    constant, as the denominator is bounded away from $0$.  This constant
    only depends on $\delta^-$.  This proves the claim. \phantom{a}\hfill\qed
\end{proof}

\begin{corollary}
    \label{c:robustdown}
    Let $x^\ast \in \Sigma$ be the unique fixed point of $P$, as described
    in Lemma~\ref{lem:Pfix}. For a given $\eta > 0$, let
    $1>\varepsilon_0>0$ and a $C_\eta>0$ be the constants of
    Corollary~\ref{c:Pconvestimate} such that \eqref{eq:basiccontr2} and
    \eqref{eq:basicconstr3} hold. Let
    \begin{equation}
        \label{eq:deltadef}
        \delta^\ast := \min \left\{ \frac{C_\eta e^\eta}{2K} , \delta^- \right \} 
\,.   
    \end{equation}
    Then for every $0<\delta < \delta^{\ast}$, all $0 < \varepsilon <
    \varepsilon_0$ and all $x \in \conv P(\Sigma) +
    \overline{B}_1(0,\delta)$ and all $\Delta\in \R^n$, $e^\top \Delta=0,
    \|\Delta\|\leq \delta$ we have $R_\varepsilon(x)+ \varepsilon\Delta\in
    P_{\mathrm{co}}(\delta)$ and
    \begin{equation}
        \label{eq:basiccontr2pert}
        d_H(x,x^\ast) \geq \eta \quad \Rightarrow \quad  
        e^{d_H}(R_\varepsilon(x)+ \varepsilon\Delta,x^\ast) < (1 - \frac{C_\eta}{2} \varepsilon) e^{d_H}(x,x^\ast) \,,
    \end{equation}
or equivalently,
\begin{equation}
    \label{eq:basicconstr3pert}
    d_H(x,x^\ast) \geq \eta \quad \Rightarrow \quad  
    d_H(R_\varepsilon(x)+\varepsilon \Delta,x^\ast) < d_H(x,x^\ast) + \log\left(1-\frac{C_\eta}{2}\varepsilon\right)
    < d_H(x,x^\ast) - \frac{C_\eta}{2} \varepsilon\,.   
\end{equation}    
\end{corollary}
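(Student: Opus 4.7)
The corollary combines the deterministic contraction estimate of Corollary~\ref{c:Pconvestimate} with the perturbation bound of Lemma~\ref{lem:pertbound}, so the plan is to show (a) set invariance of $P_{\mathrm{co}}(\delta)$ under the perturbed iteration, and then (b) a quantitative contraction in the Hilbert metric by splitting the unperturbed evolution from the perturbation.

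For (a), I would apply Lemma~\ref{lem:convinv}(i): since $x\in P_{\mathrm{co}}(\delta)$ we have $\distone(x,P_{\mathrm{co}}(\delta))=0$, and $\Delta$ satisfies the hypotheses of that lemma ($e^\top \Delta=0$, $\|\Delta\|_1\leq \delta<\delta^-$). Therefore
\[
\distone\bigl(R_\varepsilon(x)+\varepsilon\Delta,\,P_{\mathrm{co}}(\delta)\bigr)\leq (1-\varepsilon)\distone(x,P_{\mathrm{co}}(\delta))=0.
\]
Since $P_{\mathrm{co}}(\delta)$ is the convex hull of a compact set plus a closed ball, it is closed, so $R_\varepsilon(x)+\varepsilon\Delta\in P_{\mathrm{co}}(\delta)$.

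For (b), I would chain the two available estimates. First, Corollary~\ref{c:Pconvestimate} applied to $x$ with $d_H(x,x^\ast)\geq \eta$ and $\varepsilon<\varepsilon_0$ gives
\[
e^{d_H}(R_\varepsilon(x),x^\ast) < (1-C_\eta\varepsilon)\,e^{d_H}(x,x^\ast).
\]
Next, Lemma~\ref{lem:pertbound} applies because $x\in P_{\mathrm{co}}(\delta)$ and $\delta<\delta^-$, yielding
\[
e^{d_H}(R_\varepsilon(x)+\varepsilon\Delta,x^\ast) \;\leq\; e^{d_H}(R_\varepsilon(x),x^\ast) + K\varepsilon\delta.
\]
Combining these two bounds I obtain
\[
e^{d_H}(R_\varepsilon(x)+\varepsilon\Delta,x^\ast) < (1-C_\eta\varepsilon)\,e^{d_H}(x,x^\ast) + K\varepsilon\delta.
\]
To convert the additive perturbation $K\varepsilon\delta$ into a fraction of the contraction I use that $d_H(x,x^\ast)\geq \eta$ implies $e^{d_H}(x,x^\ast)\geq e^\eta$, so by the defining choice $\delta<\delta^\ast\leq C_\eta e^\eta/(2K)$,
\[
K\varepsilon\delta \;<\; \tfrac{C_\eta}{2}\varepsilon\,e^\eta \;\leq\; \tfrac{C_\eta}{2}\varepsilon\,e^{d_H}(x,x^\ast).
\]
Inserting this gives \eqref{eq:basiccontr2pert}, and \eqref{eq:basicconstr3pert} follows by taking logarithms and using the elementary inequality $\log(1-t)\leq -t$ for $t\in[0,1)$.

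The only genuinely delicate step is absorbing the perturbation $K\varepsilon\delta$ into the contraction rate; this is precisely why $\delta^\ast$ is defined via $C_\eta e^\eta/(2K)$. The factor $e^\eta$ is not optional: it is the worst-case lower bound on $e^{d_H}(x,x^\ast)$ over the region $\{d_H(x,x^\ast)\geq \eta\}$, and without it the additive error could dominate the contractive gain when $x$ is close to the boundary of the region.
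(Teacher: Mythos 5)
Your proposal is correct and follows essentially the same route as the paper's proof: invariance of $P_{\mathrm{co}}(\delta)$ by convexity, then chaining Corollary~\ref{c:Pconvestimate} with Lemma~\ref{lem:pertbound} and absorbing the perturbation term $K\varepsilon\delta$ via $\delta<\delta^\ast\leq C_\eta e^\eta/(2K)$ together with $e^{d_H}(x,x^\ast)\geq e^\eta$. Your routing of the invariance step through Lemma~\ref{lem:convinv}(i) plus closedness, where the paper simply invokes convexity directly, is a cosmetic rather than substantive difference.
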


\begin{proof}
    The first claim $R_\varepsilon(x)+ \varepsilon\Delta\in
    P_{\mathrm{co}}(\delta)$ is obvious by convexity.
    Under the assumptions we may apply Corollary~\ref{c:Pconvestimate} to
    obtain that $d_H(x,x^\ast) \geq \eta$ implies
    \begin{equation}
e^{d_H}(R_\varepsilon(x),x^\ast) < (1 - C_\eta \varepsilon) e^{d_H}(x,x^\ast) \,.
    \end{equation}
    Thus with an application of Lemma~\ref{lem:pertbound} we obtain
\begin{equation*}
    e^{d_H}(R_\varepsilon(x) + \varepsilon\Delta ,x^\ast) \leq e^{d_H}(R_\varepsilon(x),x^\ast) 
    + K \varepsilon \delta
\leq (1 - \frac{C_\eta}{2} \varepsilon) e^{d_H}(x,x^\ast) + \varepsilon\left( - \frac{C_\eta}{2} e^\eta + K \delta\right)\,.
\end{equation*}
By assumption the last term on the right hand side is negative and we
obtain \eqref{eq:basiccontr2pert}. The final claim
\eqref{eq:basicconstr3pert} is then obvious.  \hfill\qed
\end{proof}

\begin{lemma}
    \label{lem:perturbationbound}
    Let $x^\ast \in \Sigma$ be the unique fixed point of $P$, as described
    in Lemma~\ref{lem:Pfix}.  There exists a constant $C>0$ such that for
    all $x ,y \in \Sigma$ with $x\gg 0$ and for all $\varepsilon\in [0,1)$
    we have
    \begin{equation}
        \label{eq:perutbationbound}
        d_H((1-\varepsilon) x + \varepsilon y, x^\ast ) \leq
        d_H(x,x^\ast) + \log\left( 1+ C\frac{\varepsilon}{1-\varepsilon} \right)\,.
    \end{equation}
    In particular, for any $0< \varepsilon_0 < 1$ there is a constant $C_0$
    such that for all $x ,y \in \Sigma$ with $x\gg 0$ and for all
    $\varepsilon\in [0,\varepsilon_0)$ we have
    \begin{equation}
        \label{eq:perutbationbound2}
        d_H((1-\varepsilon) x + \varepsilon y, x^\ast ) \leq
        d_H(x,x^\ast) +  C_0\varepsilon\,.
    \end{equation}
\end{lemma}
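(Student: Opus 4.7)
The plan is to work in the exponentiated form $e^{d_H}(\cdot,x^\ast)=(\max_i \cdot/x_i^\ast)/(\min_j \cdot/x_j^\ast)$, bound numerator and denominator separately using only the trivial bounds $y_i\in[0,1]$, and then factor out $e^{d_H}(x,x^\ast)$. The crucial, and only non-cosmetic, ingredient will be the uniform lower bound $\max_i x_i/x_i^\ast\ge 1$, which follows from the fact that both $x$ and $x^\ast$ lie in $\Sigma$.

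\textbf{Step 1: Preliminaries on $x^\ast$.}
First I would observe that $x^\ast\gg 0$: by the characterization \eqref{eq:Cequality} we have $x_i^\ast=\gamma_F/\lambda_i(x_i^\ast)$, and (A3) gives $\lambda_i(x_i^\ast)\in[\lambda_{\min},1]$, so $x_{\min}^\ast:=\min_i x_i^\ast>0$. This is what allows the estimates below to use a finite constant $1/x_{\min}^\ast$.

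\textbf{Step 2: Coordinate bounds on $z=(1-\varepsilon)x+\varepsilon y$.}
Since $y\in\Sigma$ implies $y_i\in[0,1]$ for every $i$, I have
\[
(1-\varepsilon)x_i \;\le\; z_i \;\le\; (1-\varepsilon)x_i+\varepsilon\,,\qquad i=1,\dots,n.
\]
Dividing by $x_i^\ast$ and taking max / min,
\[
\max_i \frac{z_i}{x_i^\ast}\le (1-\varepsilon)\max_i\frac{x_i}{x_i^\ast}+\frac{\varepsilon}{x_{\min}^\ast},\qquad
\min_j \frac{z_j}{x_j^\ast}\ge (1-\varepsilon)\min_j\frac{x_j}{x_j^\ast}.
\]
Note that $x\gg 0$ and $\varepsilon<1$ give $z\gg 0$, so $d_H(z,x^\ast)<\infty$ and all manipulations are legal.

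\textbf{Step 3: Factoring out $e^{d_H}(x,x^\ast)$.}
Writing $a:=\max_i x_i/x_i^\ast$ and $b:=\min_j x_j/x_j^\ast$, so that $a/b=e^{d_H}(x,x^\ast)$, the two bounds above combine to
\[
e^{d_H}(z,x^\ast)\;\le\;\frac{(1-\varepsilon)a+\varepsilon/x_{\min}^\ast}{(1-\varepsilon)b}
\;=\;\frac{a}{b}\left(1+\frac{\varepsilon}{1-\varepsilon}\cdot\frac{1}{x_{\min}^\ast\,a}\right).
\]
The key observation closing the proof is that $a\ge 1$: if all $x_i/x_i^\ast<1$ then $\sum_i x_i<\sum_i x_i^\ast=1$, contradicting $x\in\Sigma$. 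Hence $1/(x_{\min}^\ast a)\le 1/x_{\min}^\ast=:C$ uniformly in $x$, and
\[
e^{d_H}(z,x^\ast)\;\le\;e^{d_H}(x,x^\ast)\Bigl(1+C\,\tfrac{\varepsilon}{1-\varepsilon}\Bigr).
\]
Taking logarithms yields \eqref{eq:perutbationbound}.

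\textbf{Step 4: The corollary.}
For \eqref{eq:perutbationbound2}, fix $\varepsilon_0\in(0,1)$; for $\varepsilon\in[0,\varepsilon_0)$ we have $\varepsilon/(1-\varepsilon)\le\varepsilon/(1-\varepsilon_0)$, and $\log(1+s)\le s$ for $s\ge 0$, so $\log(1+C\varepsilon/(1-\varepsilon))\le C_0\varepsilon$ with $C_0:=C/(1-\varepsilon_0)$.

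There is no real obstacle here: the proof is entirely an accounting exercise once one works in $e^{d_H}$ and remembers that $a\ge 1$ is automatic from the simplex constraint. The only thing to watch out for is the implicit requirement that $x^\ast$ has no vanishing entries, which I handle in Step 1 using (A3) together with \eqref{eq:Cequality}.
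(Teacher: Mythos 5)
Your proof is correct and takes essentially the same route as the paper's: both bound the numerator of $e^{d_H}\bigl((1-\varepsilon)x+\varepsilon y,\,x^\ast\bigr)$ via $y_i\le 1$ and the denominator via $y_j\ge 0$, then use $\max_i \{x_i/x_i^\ast\}\ge 1$ and $x_i^\ast\ge x_{\min}^\ast>0$ to absorb the additive error into the multiplicative factor $1+C\,\varepsilon/(1-\varepsilon)$ with $C=1/x_{\min}^\ast$, and conclude by taking logarithms. Your Steps 1 and 3 merely spell out the facts ($x^\ast\gg 0$ and $a\ge 1$) that the paper invokes without comment.
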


\begin{proof}
    Let $x,y \in \Sigma$ be arbitrary with $x \gg 0$. Then we obtain
    \begin{align*}
        e^{d_H}((1-\varepsilon) x + \varepsilon y, x^\ast ) 
&= \frac{\max_i \{ ((1-\varepsilon)x_i + \varepsilon y_i)/x_i^\ast \} }
       { \min_j \{ ((1-\varepsilon)x_j+ \varepsilon y_j)/x_j^\ast \} } 
\leq 
\frac{\max_i \{ ((1-\varepsilon)x_i + \varepsilon)/x_i^\ast \} }
     { \min_j \{ (1-\varepsilon)x_j/x_j^\ast \}}\\ 
&\leq 
     e^{d_H}(x,x^\ast) + \frac{\varepsilon}{1-\varepsilon}
   \frac{\max_i \{ 1/x_i^\ast \} }{ \min_j \{ x_j/x_j^\ast \} } 
=
e^{d_H}(x,x^\ast) + \frac{\varepsilon}{1-\varepsilon}
  \frac{\max_i \{ 1/x_i^\ast \} }{ \min_j \{ x_j/x_j^\ast \} } 
  \frac{\max_i \{ x_i/x_i^\ast \}}{\max_i \{ x_i/x_i^\ast \}}\\
\intertext{ and using that $\max_i \{ x_i/x_i^\ast \} \geq 1$ and $x_i^\ast\geq x_{\min{}}^\ast$ we obtain}
&\leq
e^{d_H}(x,x^\ast) \left( 1 + \frac{\varepsilon}{1-\varepsilon} \frac{1}{x_{\min{}}^\ast} \right)\,.
    \end{align*}
    The claim \eqref{eq:perutbationbound} now follows by taking the
    logarithm and defining $C$ appropriately. Then
    \eqref{eq:perutbationbound2} follows as $1/(1-\varepsilon)$ is bounded
    on an interval of the form $[0,\varepsilon_0]$ for $\varepsilon_0 <1$.
\hfill \qed
\end{proof}

\section{Proof of the Main Result}
\label{sec:mainproof}

In the following derivation we will make use of a simple fact
concerning sequences of random variables.

\begin{lemma}
    \label{lem:bill}
    Let $\{U_k\}_{k \in \N}$ be a sequence of independent, identically distributed,
    real-valued random variables with well defined expectation $\Expect(U_1) <0$ and finite
    variance $\mathrm{VAR}(U_1) \in \R$.
  Suppose that
  $\{ \varepsilon_k \}_{k\in \N}$ is a sequence of positive real numbers that is
    square summable, but not summable. Then
    \begin{equation}
        \label{eq:stochconv1}
        \sum_{k=1}^{L} \varepsilon_k U_k \to - \infty \quad \text{as }L \to \infty\quad \text{a.s.}
    \end{equation}
    Furthermore,
    \begin{equation}
        \label{eq:updown}
        \lim_{\ell\to \infty}\ \sup_{L\geq 0}\ \sum_{k=\ell}^{\ell+L} \varepsilon_k U_k = 0 \quad \text{a.s.}
    \end{equation}
\end{lemma}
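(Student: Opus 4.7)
The plan is to decompose $U_k = \mu + V_k$ where $\mu := \Expect(U_1) < 0$ and $V_k := U_k - \mu$ are IID with mean zero and finite variance $\sigma^2$. Writing $S_L := \sum_{k=1}^{L} \varepsilon_k U_k = \mu \sum_{k=1}^L \varepsilon_k + M_L$, where
\[
M_L := \sum_{k=1}^L \varepsilon_k V_k,
\]
we reduce the problem to showing that $M_L$ converges almost surely to a finite limit while the deterministic drift term goes to $-\infty$.

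For the convergence of $M_L$, I would invoke either the Khintchine--Kolmogorov convergence theorem for series of independent mean-zero random variables, or equivalently observe that $\{M_L\}$ is an $L^2$-bounded martingale with respect to its natural filtration, since
\[
\Expect(M_L^2) = \sigma^2 \sum_{k=1}^L \varepsilon_k^2 \;\leq\; \sigma^2 \sum_{k=1}^\infty \varepsilon_k^2 \;<\; \infty
\]
by square-summability. The martingale convergence theorem then yields an a.s.\ finite limit $M_\infty$. Combined with $\mu \sum_{k=1}^L \varepsilon_k \to -\infty$ (because $\mu < 0$ and $\sum \varepsilon_k = \infty$), this proves the first claim \eqref{eq:stochconv1}.

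For \eqref{eq:updown}, I would decompose
\[
\sum_{k=\ell}^{\ell+L} \varepsilon_k U_k \;=\; \mu \sum_{k=\ell}^{\ell+L} \varepsilon_k \;+\; \bigl( M_{\ell+L} - M_{\ell-1} \bigr),
\]
where the first summand is non-positive. Taking the supremum over $L \geq 0$ thus gives the upper bound
\[
\sup_{L\geq 0}\ \sum_{k=\ell}^{\ell+L} \varepsilon_k U_k \;\leq\; \sup_{L\geq 0}\bigl( M_{\ell+L} - M_{\ell-1} \bigr),
\]
and the right-hand side tends to $0$ almost surely as $\ell \to \infty$ because $M_L$ converges to $M_\infty$. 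For the matching lower bound, I would take $L=0$ and observe that
\[
\sup_{L\geq 0}\ \sum_{k=\ell}^{\ell+L} \varepsilon_k U_k \;\geq\; \varepsilon_\ell U_\ell \;=\; \varepsilon_\ell \mu + (M_\ell - M_{\ell-1}),
\]
which tends to $0$ almost surely since $\varepsilon_\ell \to 0$ (a consequence of square-summability) and the increments of a convergent series vanish.

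I do not anticipate a significant obstacle: the only mild subtlety is verifying that the liminf of the supremum is $\geq 0$, which is handled by the $L=0$ slice. No uniform integrability or dominated convergence argument is required, since the almost-sure convergence of $M_L$ directly controls both the ``tail'' fluctuations and the individual increments $\varepsilon_\ell V_\ell$.
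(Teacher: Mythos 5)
Your proposal is correct and follows essentially the same route as the paper: decompose $\varepsilon_k U_k$ into the deterministic drift $\varepsilon_k\mu$ plus the centered terms $\varepsilon_k V_k$, deduce a.s.\ convergence of $\sum_k \varepsilon_k V_k$ from square-summability of the variances (the paper invokes \cite[Theorem~22.6]{billingsley2009convergence}, which is exactly the Khintchine--Kolmogorov convergence theorem you cite, with the $L^2$-martingale argument being an equivalent phrasing), and conclude \eqref{eq:stochconv1} since $\mu\sum_k\varepsilon_k\to-\infty$. Your treatment of \eqref{eq:updown} also matches the paper's: drop the non-positive drift to bound the supremum by the tail fluctuations of the convergent series, and use the $L=0$ slice for the matching lower bound.
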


\begin{proof} 
Introduce the random sequence $\{V_k\}$ defined by
\[
 V_k= \epsilon_k ( U_k- \bar{U} ),
 \]
 where $\bar{U}:= \Expect(U_1)$. Then, for all $k$,  $\bar{U} =\Expect(U_k)$
and $\Expect(V_k) = 0$.
Also, since the second moment of $U_k$ exists, we may compute
\[{\mathrm{VAR}}(V_k)= \epsilon_k^2 \ {\mathrm{VAR}}(U_k) = \epsilon_k^2 \ 
{\mathrm{VAR}}(U_1) \,.\]
%
  By assumption on the sequence $\{ \varepsilon_k \}$ the series
  $\sum_{k=0}^\infty  {\mathrm{VAR}}(V_k)$ converges
  and so by
    \cite[Theorem~22.6]{billingsley2009convergence} 
   $\sum_{k=1}^\infty V_k$
    converges almost surely to a finite value. 
    Since
 \begin{equation}
 \epsilon_k  U_k =\epsilon_k  \bar{U} +V_k 
 \end{equation}
  we have
    \[
   \sum_{k=1}^L \varepsilon_k  U_k  = \left(\sum_{k=1}^L\varepsilon_k\right) \bar{U}+  \sum_{k=1}^L V_k 
    \]
By assumption,  the positive sequence  $\{\varepsilon_k\}$ is not summable while $\bar{U} <0$; hence
   $\sum_{k=1}^L \varepsilon_k  U_k$ diverges almost surely to $-\infty$.
\\
    
     To prove the second claim, consider
     \begin{equation*}
   \sum_{k=\ell}^{\ell+L} \varepsilon_k  U_k  =
    \left(\sum_{k=\ell}^{\ell+L}\varepsilon_k\right) \bar{U}+  \sum_{k=\ell}^{\ell+L} V_k 
    \le \sum_{k=\ell}^{\ell+L} V_k 
     \end{equation*}
     Again by \cite[Theorem~22.6]{billingsley2009convergence} the partial
     sums on the right are almost surely partial sums of a convergent
     series. Then the Cauchy criterion says that there are only finitely
     many $\ell\in \N$ such that the sum exceeds a given $C>0$. This shows
     ``$\leq 0$'' in \eqref{eq:updown}.  
Equality follows from the case $L=0$. 
\hfill\qed
\end{proof}

In the proof we also need a continuity result extending
Lemma~\ref{lem:uniformfixed} to the family of Markov chains with fixed
probability $z_0 \in \Sigma$. In the following result we use the notation
$\Prob_{z_0}$ to indicate a probability statement for the Markov chain
\eqref{eq:AIMDfixed} with fixed probability $\lambda = \lambda(z_0)$.

\begin{lemma} 
\label{lem:propvi}
Suppose  that Assumptions  (A1)--(A3) hold.
    Consider the family of Markov chains \eqref{eq:AIMDfixed} with fixed
    probability $\lambda = \lambda(y)$, parametrized by $y\in \Sigma$.
Then, for each  $\bar{\delta} >0 $ and $\theta \in (0,1]$ there exists an
    $m\in \mathbb{N}$ such that for all $y \in \Sigma$ 
    \begin{equation}
            \label{eq:preprobbound}
            \hat{\Prob}_{\lambda(y)}\left( \left\| \bar{S}(m) - P(y)e^\top \right\|_1 > \bar{\delta} \right) < \theta \,.
        \end{equation} 
\end{lemma}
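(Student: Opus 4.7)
The plan is to establish a uniform-in-$y$ version of Lemma~\ref{lem:uniformfixed} by extracting an explicit convergence rate that depends only on the AIMD data $(\alpha,\beta)$, the dimension $n$, and the uniform lower bound $\lambda_{\min}$ supplied by (A3). Once such a rate is in hand, Chebyshev's inequality together with a union bound over the columns of $\bar{S}(m) - P(y) e^\top$ will deliver the required uniform probability estimate.

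The first step is to analyse the expected one-step transition matrix $\bar{A}(y) := \Expect_{\lambda(y)}[A(0)] = \sum_{A \in {\cal A}} p_A(y)\, A$, which is column stochastic with $P(y) = \xi_{\lambda(y)}$ as its unique stationary vector. Assumption (A3) gives $p_1(y) = \prod_i \lambda_i(y_i) \geq \lambda_{\min}^n > 0$, and since $\|{A_1}_{|V}\|_1 = c < 1$ by Lemma~\ref{lem:A1contr} while $\|A_{|V}\|_1 \leq 1$ for every $A \in {\cal A}$, a convex-combination estimate yields the uniform bound
\[
  \bigl\|\bar{A}(y)_{|V}\bigr\|_1 \;\leq\; \rho \;:=\; 1 - \lambda_{\min}^n (1-c) \;<\; 1.
\]
Iterating shows that $\Expect_{\lambda(y)}[A(k-1)\cdots A(0)]_{|V} = (\bar{A}(y)_{|V})^k$ contracts at rate $\rho^k$, and a geometric-series computation then gives $\|\Expect_{\lambda(y)}[\bar{S}(m)] - P(y) e^\top\|_1 = O(1/m)$ with a constant independent of $y$.

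The second step is a uniform second-moment bound for the random deviations of $\bar{S}(m)$ from its mean. For each basis vector $e_i$ the orbit $X_j^{(i)} := A(j-1)\cdots A(0) e_i$ is a Markov chain on $\Sigma$ whose autocovariances decay at rate $\rho^{|j-j'|}$, by the same contraction-on-$V$ argument applied, via conditional expectation, to second moments. Summing over $j,j'$ yields $\Var_{\lambda(y)}\bigl(\bar{S}(m) e_i\bigr) \leq C/m$ with $C$ depending only on $\lambda_{\min}, c, \alpha, \beta, n$. Applying Chebyshev componentwise to $\bar{S}(m) e_i - P(y)$ for each $i$, taking a union bound over $i = 1,\ldots,n$, and invoking the identity $\|M\|_1 = \max_i \|M e_i\|_1$ used in the proof of Lemma~\ref{lem:uniformfixed}, produces a single integer $m = m(\bar{\delta},\theta)$ for which the required estimate holds for every $y \in \Sigma$.

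The principal obstacle is the uniform variance estimate: the random transition matrices at different steps are IID in law but act multiplicatively on the state, so the covariance bound requires an iterated conditioning argument that carefully exploits the uniform contraction on $V$ supplied by (A3) and Lemma~\ref{lem:A1contr}. A conceptually tempting alternative would be to combine compactness of $\Sigma$ with a total-variation coupling of joint laws on finite horizons, since for each $y_0$ Lemma~\ref{lem:uniformfixed} already provides an $m_0(y_0)$ and for $y$ near $y_0$ the laws of $(A(0),\ldots,A(m_0-1))$ under $\hat{\Prob}_{\lambda(y)}$ and $\hat{\Prob}_{\lambda(y_0)}$ are close in total variation by continuity of $\lambda$. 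Patching these local bounds into a single $m$ valid on the whole simplex, however, runs into a chicken-and-egg issue between the diameters of the covering neighbourhoods and the averaging horizon, so the variance route is preferable despite requiring explicit moment computations.
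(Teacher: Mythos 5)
Your proof is correct, but it takes a genuinely different route from the paper's---and, amusingly, the ``conceptually tempting alternative'' you dismiss at the end is precisely the paper's actual argument. The paper fixes $\hat y\in\Sigma$, takes $\hat m$ from Lemma~\ref{lem:uniformfixed}, and uses continuity of $y\mapsto \hat{\Prob}_{\lambda(y)}\bigl( \| \bar S(\hat m)-P(\hat y)e^\top\|_1>\delta\bigr)$ (a finite sum of products of the continuous functions $\lambda_i$) plus compactness of $\Sigma$ to obtain finitely many horizons $\hat m_1,\dots,\hat m_N$ covering the simplex; the chicken-and-egg issue you worry about does not arise because each neighbourhood is chosen \emph{after} its horizon $\hat m$ is fixed. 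What does arise is the mismatch among the finitely many horizons, which the paper repairs with a block argument: writing $\bar S(\ell\hat m)$ as an average of $\ell$ blocks of length $\hat m$ and using column stochasticity of the partial products $\Pi(\nu\ell)$, the variable $D(\ell\hat m)=\|\bar S(\ell\hat m)-P(y)e^\top\|_1$ is dominated by the average of $\ell$ independent copies of $D(\hat m)$; since $0\le D\le 2$, Tchebycheff's inequality upgrades validity at $\hat m_j$ to validity at all large multiples, and one concludes by taking a sufficiently large common multiple of $\hat m_1,\dots,\hat m_N$. Your route instead extracts an explicit mixing rate, and its key steps check out: $\|\bar A(y)_{|V}\|_1\le p_{A_1}(y)\,c+(1-p_{A_1}(y))\le 1-\lambda_{\min}^n(1-c)=:\rho<1$ by convexity of the induced norm, (A3) and Lemma~\ref{lem:A1contr}; the identification of $P(y)$ as the Perron vector of $\bar A(y)$ is true and deserves the one line you omit, namely $\bar A(y)=\diag(b)+\alpha(e-b)^\top$ with $b_i=1-\lambda_i(y_i)(1-\beta_i)$, whose Perron vector is proportional to $\alpha_i/(\lambda_i(y_i)(1-\beta_i))$, matching \eqref{eq:expfixedprob}; the bias bound follows since $\bar A(y)^j-P(y)e^\top$ maps $\Sigma$ into $V$ with $1$-norm at most $2\rho^j$, giving $\|\Expect[\bar S(m)]-P(y)e^\top\|_1\le 2/((m+1)(1-\rho))$; and the covariance bound $|\mathrm{Cov}((X_{j'})_\ell,(X_j)_\ell)|\le 2\rho^{|j'-j|}$ follows from conditioning on the time-$j$ history, since $\Expect[X_{j'}\mid X_j]=\bar A(y)^{j'-j}X_j$ and $X_j-\Expect X_j\in V$, whence each column of $\bar S(m)$ has componentwise variance $O(1/((1-\rho)m))$ uniformly in $y$ and $i$; Chebyshev plus the union bound over columns and \eqref{eq:lemnormdef} then finish. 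What each approach buys: yours is fully constructive, producing an admissible $m$ of explicit order $n^{3}/(\bar\delta^{2}\theta(1-\rho))$ with all constants depending only on $(n,c,\lambda_{\min})$, and it dispenses with continuity and compactness entirely (indeed it never uses (A1), only (A3)); the paper's is softer, reusing Lemma~\ref{lem:uniformfixed} as a black box with no moment computation beyond $0\le D(k)\le 2$, at the price of a non-explicit $m$ and an essential reliance on continuity of the $\lambda_i$.
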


\begin{proof}
    Fix $\varepsilon,\delta >0 $
   and $\hat{y}\in \Sigma$.
    By Lemma~\ref{lem:uniformfixed} there exists an $\hat{m}$ such
    that 
\[
\hat{\Prob}_{\lambda(\hat{y})}\left( \left\| \bar{S}(\hat{m}) - P(\hat{y})e^\top \right\|_1 > \delta \right) < \varepsilon\,.
\]
    Now the map $P$ is continuous by Assumptions (A1) and
    (A3). Furthermore, the  map
    \[
    y \rightarrow \hat{\Prob}_{\lambda(y)}\left( \left\| \bar{S}(\hat{m}) - P(\hat{y})e^\top \right\|_1 > \delta \right)
    \]
    is continuous.
   We obtain that
\begin{equation}
        \label{eq:eq:preprobboundinter}
   \hat{\Prob}_{\lambda(y)}\left( \left\| \bar{S}(\hat{m}) - P(y)e^\top
     \right\|_1 > 2 \delta \right) < 2 \varepsilon\,.
    \end{equation}
  holds on a neighborhood of
    $\hat{y}$. 
    As $\Sigma$ is compact, it is covered by
a finite number  of such neighborhoods. 
With this argument, and as
    $\varepsilon,\delta$ are arbitrary, we see that there are finitely
    many $\hat{m}_1, \ldots, \hat{m}_N$ in $\mathbb{N}$ such that for every $y \in \Sigma$
    there is an $\hat{m}_j$ such that \eqref{eq:eq:preprobboundinter} holds
    with $\hat{m}=\hat{m}_j$.

    The final claim then follows from an application of Tchebycheff's
    inequality as follows. For $y\in \Sigma$ and $k\in \N$ consider the
    real valued random variable 
    \[
    D(k) = \left\| \bar{S}(k) -
      P(y)e^\top \right\|_1.
      \]
       Note that $0\le D(k) \le 2$, as
    $\bar{S}(k)$ and $P(y)e^\top$ are column
    stochastic. Thus trivially, $\Expect(D(k)^2) \leq 4$. Also, if
    \eqref{eq:eq:preprobboundinter} holds then it follows that
    \begin{equation*}
        \Expect(D(\hat{m})) \leq (1-\varepsilon) \delta + 2\varepsilon\,. 
    \end{equation*}
    We note that the latter inequality is independent of a particular
    $y$ and just depends on the fact that $\hat{m}$ is chosen so that
    \eqref{eq:eq:preprobboundinter} holds.  Fix $\overline{\delta},
    \theta>0$. 
    Suppose that  
    $\varepsilon,\delta>0$  are chosen such that $(1-\varepsilon) \delta +
    2\varepsilon < \overline{\delta}/{2}$.
Then it follows from \eqref{eq:eq:preprobboundinter}  that
    \begin{equation}
        \label{eq:expvarX}
        \Expect(D(\hat{m})) < \frac{\overline{\delta}}{2}\,.
    \end{equation}
    Denoting 
    \[
    \Pi(k) = A(k-1) \cdots A(0)
    \]
    for $k\in \N$,
     we have
     \begin{eqnarray*}
     \bar{S}(k) = \frac{1}{k+1}\sum_{j=0}^k \Pi(j)
     \end{eqnarray*}
     Hence, for multiples of $\hat{m}$:
     \begin{eqnarray*}
     \bar{S}(\ell \hat{m}) &=& \frac{1}{\ell \hat{m}+1}\sum_{i=1}^{\ell \hat{m}} \Pi(i)
     = \frac{1}{\ell \hat{m}+1}\sum_{\nu=0}^{\ell-1}\sum_{j=0}^{\hat{m}-1}  \Pi(\nu \ell +j)\\
     &=&
       \frac{1}{\ell \hat{m}+1} \sum_{\nu=0}^{\ell-1}  \sum_{j=0}^{m-1} A(\nu \ell + j) \cdots A(\nu \ell) \Pi(\nu \ell) 
         \end{eqnarray*}
     Hence
    \begin{align*}
        \label{eq:finalsteplempropvi}
        D(\ell \hat{m}) &\leq \frac{1}{\ell} 
        \sum_{\nu=0}^{\ell-1} \left\| \frac{1}{\hat{m}}\sum_{j=0}^{\hat{m}-1} A(\nu \ell + j) \cdots A(\nu \ell) \Pi(\nu \ell) - P(y) e^\top \right\|_1\\
&\leq \frac{1}{\ell} 
        \sum_{\nu=0}^{\ell-1} \left\| \frac{1}{\hat{m}}\sum_{j=0}^{\hat{m}-1} A(\nu \ell + j) \cdots A(\nu \ell)  - P(y) e^\top \right\|_1 \|\Pi(\nu \ell)\|_1 \\
&= \frac{1}{\ell} 
        \sum_{\nu=0}^{\ell-1} \left\| \frac{1}{\hat{m}}\sum_{j=0}^{\hat{m}-1} A(\nu \ell + j) \cdots A(\nu \ell)  - P(y) e^\top \right\|_1 \,.
    \end{align*}
    By the independence assumption on the $A(j)$ we see that the final
    term is the average of $\ell$ independent copies of $D(\hat{m})$, the
    variance of which is bounded by $ 4/\ell$. Let $\{ D_\nu(\hat{m})\}_{\nu \in \mathbb{N}}$
    be a sequence of  independent copies of $D(\hat{m})$.
     It
    follows from Tchebycheff's inequality that for all $\ell \geq
    4\sqrt{2}/(\theta \sqrt{\overline{\delta}})$ we have
    \begin{equation*}
\hat{\Prob}_{\lambda(y)}\left(  \left\| \bar{S}(\ell \hat{m}) -
      P(y)e^\top \right\|_1 > \overline{\delta} \right) \leq \hat{\Prob}_{\lambda(y)}\left( \frac{1}{\ell} \sum_{\nu=0}^{\ell-1} D_\nu(\hat{m}) > \Expect( D(\hat{m})) + \frac{\overline{\delta}}{2} \right) < \theta \,.
    \end{equation*}
    As the previous argument only depends on the validity of
    \eqref{eq:eq:preprobboundinter}, it holds uniformly for all $y \in
    \Sigma$ for which the choice of $\hat{m}$ guarantees
    \eqref{eq:eq:preprobboundinter}.  The proof is completed, by choosing
   $m$ sufficiently large so that it is a  common multiple of $\hat{m}_1,\ldots, \hat{m}_N$. \hfill\qed
\end{proof}

\begin{proof} {\bf (of Theorem \ref{t:convergence})}
    In the proof, we make extensive use of the deterministic system
    discussed in Appendix~\ref{sec:deterministic}. We will show that for
    $T$ sufficiently large the behavior of $\bar{z}(T)$ is well
    approximated by the deterministic system.\\

     We assume that the constants
    $\delta^-, \delta^+$ from \eqref{eq:deltaminusdef},
    \eqref{eq:deltaplus} have been fixed. We will use the notation
    $z(T;z_0)$, resp. $\bar{z}(T;z_0)$ to indicate the initial condition
    for the random variable $z(T)$, resp. its second component vector
    $\bar{z}(T)$. Similarly, the notation $z(T+m;z(T))$ indicates the
    conditioning of $z(T+m)$ on a certain value at time $T$, etc.

    Fix $\eta>0$. We aim to show that almost surely the sample path
    $\bar{z}(T)\in B_1(x^\ast, \eta)$ for all $T$ large enough. As
    $\eta>0$ is arbitrary this will show the claim. 

    To attain our goal, we perform the following sequence of choices:
    \begin{enumerate}[(i)]
    \item For the constant ${\eta}$ pick $\varepsilon_0>0$ and $C_\eta>0$
      according to Corollary~\ref{c:Pconvestimate}, so that
      \eqref{eq:basicconstr3} is satisfied for all $0<
      \varepsilon<\varepsilon_0$.
      \item Let $C_0$ be the constant guaranteed by
        Lemma~\ref{lem:perturbationbound} satisfying
        \eqref{eq:perutbationbound2} for all $0< \varepsilon <
        \varepsilon_0$.
      \item Let $K>0$ be the constant given by Lemma~\ref{lem:pertbound}.
      \item Choose $\delta^\ast$, $\bar{\delta}$ according to \eqref{eq:deltadef}, so that
    \begin{equation}
        \label{eq:deltadefrepeat2}
        \delta^\ast := \min \left\{ \frac{C_\eta e^\eta}{2K} , \delta^- \right \} 
        \quad \text{and} \quad 0 < \bar{\delta} <  \delta^\ast/3 \,,
    \end{equation}
    so that Corollary~\ref{c:robustdown} and
    Lemma~\ref{lem:convinv}\,(iii) are applicable. Let
    $C_{\bar{\delta}}>0$ be the constant guaranteed by
    Lemma~\ref{lem:convinv}\,(iii).
      \item Pick $\theta\in (0,1)$ so that 
        \begin{equation*}
            - (1-\theta) + \theta (1+C_{\bar \delta}) < 0 \,,\quad \text{and} \quad 
            - (1-\theta)C_\eta + \theta C_0 < 0\,.
        \end{equation*}
      \item We now appeal to Lemma~\ref{lem:propvi} to determine the
        length of the (short) averaging period discussed in the preamble.

        Using Lemma~\ref{lem:propvi} and \eqref{eq:corstep1}, pick $m\in
        \N$ such that for all $y \in \Sigma$ the Markov chain
        \eqref{eq:AIMDfixed} with fixed probability $\lambda =
        \lambda(y)$ satisfies for all $T\in \N$   that
        \begin{equation}
            \label{eq:probbound}
            \hat{\Prob}_{\lambda(y)}\left( \left\| \frac{1}{m} \sum_{j=1}^m x(T+j) - P(y) \right\|_1 > \bar{\delta} \right) 
            < \frac{\theta}{2} \,.
        \end{equation}
      \item Pick $T_0\in \N$ such that for all $T\geq T_0$ we have
        $m/(T+m) < \varepsilon_0$ and so that for the Markov
        chain~\eqref{eq:AIMD2infav} with place dependent probabilities we
        have 
        that
        \begin{equation}
            \label{eq:probboundz}
            \Prob_{x_0}\left( \left\| \frac{1}{m} \sum_{j=1}^{m} 
            x(T+j) - P(\bar{x}(T)) \right\|_1 > \bar{\delta} \right) < \theta \,.
        \end{equation}
        This is possible as this inequality is a perturbed version of
        \eqref{eq:probbound}: indeed, with increasing $T$ the variation of
        $\bar{x}(T+j), 0\leq j \leq m$ (i.e. in the first $m$ steps after
        time $T$) becomes arbitrarily small. More precisely, for
        $j=1\ldots,m$ we have by definition
\begin{equation*}
   \|\bar{x}(T) - \bar{x}(T+j)\|_1 \leq \frac{2m}{T} \,.   
\end{equation*}
Thus as $T\to \infty$ the place dependent probabilities of the Markov
chain~\eqref{eq:AIMD2infav} that are considered on the interval $[T,T+m]$
converge to the fixed probabilities $\lambda(\bar{x}(T))$. The claim then
follows from \eqref{eq:probbound} by continuity of the probability functions $\lambda_i$ (see (A1)).
    \end{enumerate}
    
    Let $T\geq T_0$, so that by construction $\varepsilon := m/(T+m) <
    \varepsilon_0$. 
    We will study the evolution of the value $\bar{x}(T+k
    m) \mapsto \bar{x}(T+(k+1)m)$, $k\in \N$. This is given by
    \begin{align*}
        \bar{x}(T+(k+1)m) &= \frac{T+ k m+1}{ T+ (k+1)m+1 } \bar{x}(T + k m) +
        \frac{m}{ T+ (k+1)m+ 1} \left( \frac{1}{m} \sum_{j=1}^{m}
          x(T+k m + j ) \right)\,.
    \end{align*}
    For ease of notation we define
    \[
    \tau(k) := T + k m
    \qquad \text{and} \qquad 
       \varepsilon_k :=  \frac{m}{ T+ (k+1)m+ 1}
       \]
 so that the previous equation can be expressed as\footnote{This chain is not a typical stochastic
        approximation chain due to the interdependence between $x$
      and $\bar x$---cf. \eqref{eq:AIMD2infav} \eqref{eq:probdef}. 
      It is similar
    in form to the two-timescales process of
    \cite[Chapter~6]{Borkar08}, but does not satisfy the convergence conditions therein.}
    \begin{align}
                \bar{x}(\tau(k+1)) &= (1-\varepsilon_{k}) \bar{x} (\tau(k)) +
         \varepsilon_{k} \left( \frac{1}{m} \sum_{j=1}^{m}
          x(\tau(k)+j) \right)\,.
    \end{align}
    At this point the reader should recognize the structure of the
    discrete iteration we have analyzed in Section~\ref{sec:deterministic}
    and notice that by (vii) we have a high probability that
    $\bar{x}(\tau(k+1))$ is close to $R_{\varepsilon_{k}}(\bar{x}
    (\tau(k)))$.\\

 Note that the constants have
    been chosen so that both Lemma~\ref{lem:convinv} and
    Corollary~\ref{c:robustdown} are applicable.  We will use the
    estimates obtained in Lemma~\ref{lem:convinv} to show that
    trajectories starting in $\Sigma$ will reach the set 
    $P_{\mathrm{co}}(2\bar{\delta})$
    in a finite number of steps. We
    then show that for trajectories starting in the strict superset
    $P_{\mathrm{co}}(3\bar{\delta})$
    the estimates of
    Corollary~\ref{c:robustdown} yield that we reach the set
    $B_H(x^\ast,\eta)$ again in a finite number of steps; almost surely.\\

    {\parindent0pt \bf Step 1:} More precisely, we will first show that (the
    first hitting time)
    \begin{equation*}
        \sigma_1 := \min \{ k \in \N \;;\; \bar{x}(\tau(k)) \in
P_{\mathrm{co}}(2\bar{\delta})\ \} 
    \end{equation*}
    is almost surely finite. Obviously, if $\bar{x}(T)\in
    P_{\mathrm{co}}(2\bar{\delta})$ 
    there is nothing to show. 
    Appealing
    to Lemma~\ref{lem:convinv}\,(i) and the choice made in (vii), we have
    that if $\distone(\bar{x}(\tau(k)), P_{\mathrm{co}}(\bar{\delta})) > \bar{\delta}$, then
    \begin{equation*}
        \Prob_{x_0}\Bigl( \distone(\bar{x}(\tau(k+1)), P_{\mathrm{co}}(\bar{\delta})) \leq (1-\varepsilon_{k})\ \distone(\bar{x}(\tau(k)), P_{\mathrm{co}}(\bar{\delta})) \Bigr) \geq 1 - \theta\,.
    \end{equation*}
    In the complementary event, which happens with probability of at most
    $\theta$ we have by Lemma~\ref{lem:convinv} that
    \begin{equation*}
        \distone(\bar{x}(\tau(k+1)), P_{\mathrm{co}}(\bar{\delta})) \leq (1+ \varepsilon_{k} C_{\bar \delta})\,
        \distone(\bar{x}(\tau(k)), P_{\mathrm{co}}(\bar{\delta})) 
    \end{equation*}
    Combining these two observations we see that for $\tau(k) <
    \sigma_1$ we have that
\begin{equation}
\label{eq:decay0}
    \distone(\bar{x}(\tau(k)), P_{\mathrm{co}}(\bar{\delta})) \leq \left( \prod_{\ell=1}^k a_\ell\right) \,
    \distone(\bar{x}(T), P_{\mathrm{co}}(\bar{\delta})), 
\end{equation}
where $a_\ell, \ell \in \N$ is a random variable that has the value
$(1-\varepsilon_\ell)$ with probability $1-\theta$ and the value
$(1+\varepsilon_\ell C_\delta )$ with probability $\theta$. 
By
construction the random variables $a_\ell$ are independent, as the bounds
obtained do not depend on the particular sample path of the Markov
chain.

To be able to apply Lemma~\ref{lem:bill} we first note that for all $\ell$
large enough we have $\log(1+\varepsilon_\ell C_\delta ) <
\varepsilon_\ell (1+C_\delta)$. By the choice of $\theta$ in (vi), we
obtain for all $\ell$ large enough that $\Expect( \log a_\ell) \leq
\left(-(1-\theta) + \theta (1+C_\delta)
\right)\varepsilon_\ell$. Lemma~\ref{lem:bill} thus implies that
$\sum_{\ell=1}^k \log a_\ell \to -\infty$, almost surely. Thus almost
surely we have
\begin{equation*}
    \lim_{k\to \infty} \distone(\bar{x}(\tau(k)), P_{\mathrm{co}}(\bar{\delta})) = 0\,,
\end{equation*}
provided that $\bar{x}(\tau(k)) \notin 
P_{\mathrm{co}}(2\bar{\delta})$ for all $k$. This is of course impossible, and
so almost surely $\bar{x}(\tau(k)) \in 
P_{\mathrm{co}}(2\bar{\delta})
$ for a finite $k$.\\

{\parindent0pt \bf Step 2:} Similarly, if $\bar{x}(\tau(k)) \in 
P_{\mathrm{co}}(3\bar{\delta})
$, then by Corollary~\ref{c:robustdown} and the
choice made in (vii) we have
\begin{equation*}
    \Prob_{x_0}\left( d_H (\bar{x}(\tau(k+1)), x^\ast) < 
d_H(\bar{x}(\tau(k)),x^\ast) - \frac{C_\eta}{2} \varepsilon_{k} \right) \geq 1 - \theta\,.
\end{equation*}
On the other hand with probability of at most $\theta$ we have by
Lemma~\ref{lem:perturbationbound} that
    \begin{equation}
        \label{eq:leaving}
        d_H(\bar{x}(\tau(k+1)), x^\ast ) \leq
        d_H(\bar{x}(\tau(k)),x^\ast) +  C_0\varepsilon_{k}\,.
    \end{equation}
    In a similar fashion to the first step, as long as
    $\bar{x}(\tau(k)) \in P_{\mathrm{co}}(3\bar{\delta})
$
    and $d_H(\bar{x}(\tau(k)),x^\ast) > \eta$, we have
\begin{equation}
\label{eq:decay1}
    d_H(\bar{x}(\tau(\sigma_1(\bar{z}(T)+k)), x^\ast ) \leq
        d_H(\bar{x}(\sigma_1(\bar{z}(T)+k)),x^\ast) + \sum_{\ell=1}^k b_\ell\,,
\end{equation}
where $b_\ell$ is a random variable that takes the value
$-\varepsilon_\ell C_\eta$ with probability $(1-\theta)$ and the value
$\varepsilon_\ell C_0$ with probability $\theta$. As before,
Lemma~\ref{lem:bill} ensures that $\sum_{\ell} b_\ell$ diverges to
$-\infty$, almost surely.  Note that it is always possible to leave the
set $P_{\mathrm{co}}(3\bar{\delta})
$
with a small probability. In this case Step~1 can be applied again, so
that we re-enter the set $P_{\mathrm{co}}(2\bar{\delta})
$, almost surely. Now by \eqref{eq:decay0} the process of entering
$P_{\mathrm{co}}(2\bar{\delta})$ and subsequently leaving $P_{\mathrm{co}}(3\bar{\delta})$ requires that for some partial sum we have
\begin{equation*}
    \sum_{k=\ell}^{\ell + L} \log(a_k) \geq \bar{\delta}\,.
\end{equation*}
By Lemma~\ref{lem:bill}, with probability $1$, this happens only a finite
number of times. Consequently, almost surely a sample path will reach 
$B_H(x^\ast,\eta)$.\\

{\parindent0pt \bf Step 3:} Finally, to obtain almost sure convergence, we
need to show that almost surely
\begin{equation}
    \label{eq:asconv}
   \bar{x}(\tau(k)) \in B_H(x^\ast,\eta) \,,\quad \text{for all $k$ large enough.} 
\end{equation}
To this end we repeat the choices made in (i) - (vii) for the value
$\eta/2$. Thus we can conclude that almost surely a sample path enters
$B_H(x^\ast,\eta/2)$. If we assume that the sample path leaves
$B_H(x^\ast,\eta)$ at some later time, then again by Steps 1 and 2 it will
almost surely re-enter $B_H(x^\ast,\eta/2)$. The question is thus whether
it is possible that infinitely often the sample path exits the ball
$B_H(x^\ast,\eta)$ given that it was previously within the ball
$B_H(x^\ast,3\eta/4)$. 
In view of \eqref{eq:decay1} this amounts to saying
that
\begin{equation*}
    \sum_{k=\ell}^{\ell+L} b_k > \frac{\eta}{4}
\end{equation*}
for pairs $(\ell,L)\in \N^2$ with arbitrarily large $\ell$.  By
Lemma~\ref{lem:bill} this almost surely does not happen. This shows
\eqref{eq:asconv}. The proof is complete by noting that the small
variations of $\bar{x}$ on the intervals $\tau(k), \ldots, \tau(k+1)$ do
not destroy stability. 
Indeed, if $\bar{x}(\tau(k)) \in B_H(x^\ast,\eta/2)$
for all $k$ large enough, then also $\bar{x}(\tau(k)+j) \in B_H(x^\ast,\eta)$ for $j=1,\ldots,m$, provided $k$ is large enough.
~\hfill\qed
\end{proof}

\section{Acknowledgment}
The authors thank Ronald Fagin for helpful comments on an earlier version
of this paper.


\end{document}